\newcommand{\eps}{\varepsilon}
\newcommand{\bt}{\begin{thr}{\bf Theorem. }}
\newcommand{\satz}{\begin{thr}{\bf Theorem. }\rm}
\newcommand\E{\mathbb{E}}
\newcommand\R{\mathbb{R}}
\newcommand\Rp{\R_{\,+}}
\newcommand\iin{_{i=1}^N}
\newcommand\jin{_{j=1}^N}
\newcommand\cij{c(i,j)}
\newcommand\piij{\pi(i,j)}
\newcommand\cpi{\langle c, \pi \rangle}
\renewcommand\i{\infty}
\renewcommand\phi{\varphi}
\newcommand\la{\lambda}
\newcommand\lan[1]{\langle #1 \rangle}
\newcommand\hphi{\widehat{\varphi}}
\newcommand\hpsi{\widehat{\psi}}
\newcommand\hpi{\widehat{\pi}}
\newcommand\hh{\widehat{h}}
\newcommand\N{\mathbb{N}}
\newcommand\Z{\mathbb{Z}}
\newcommand\T{\mathbb{T}}
\newcommand{\IM}{I_{\text{middle}}}
\newcommand{\Prel}{P^{\mathrm{{rel}}}}
\newcommand{\Drel}{D^{\mathrm{{rel}}}}
\newcommand{\Pipart}{\Pi^{\mathrm{part}}}
\newcommand\I{\mathbf{1}}
\newcommand\ldot{\,.\,}
\newtheorem{theorem}{Theorem}[section]  
\newtheorem{definition}[theorem]{Definition}
\newtheorem{lemma}[theorem]{Lemma}
\newtheorem{proposition}[theorem]{Proposition}
\newtheorem{example}[theorem]{Example}
\title{On the Duality Theory for the Monge--Kantorovich Transport Problem}
\author{Mathias Beiglb\"ock, Christian L\'eonard, Walter Schachermayer\thanks{The first author acknowledges financial support from the Austrian Science
Fund (FWF) under grant P21209. The third author acknowledges support from the Austrian Science Fund
(FWF) under grant P19456, from the Vienna Science and Technology Fund (WWTF) under grant
MA13 and by the Christian Doppler Research Association (CDG).
All authors thank A.~Pratelli for helpful discussions on the topic of this paper. We also thank L.~Summerer for his advice. }}
\begin{document}

\date{}
\maketitle


\section{Introduction}
This article, which is an accompanying paper to \cite{BeLS09a}, consists of two parts: In section 2 we present a version of
Fenchel's perturbation method for the duality theory of the Monge--Kantorovich problem of optimal transport. The treatment is elementary as we suppose that
the spaces $(X,\mu), (Y,\nu)$, on which the optimal transport problem \cite{Vill03,Vill09} is defined, simply equal the finite set
$\{1,\dots,N\}$ equipped with uniform measure. In this setting the optimal transport problem reduces to a finite-dimensional linear programming problem.

The purpose of this first part of the paper is rather didactic: it
should stress some features of the linear programming nature of
the optimal transport problem, which carry over also to the case
of general polish spaces $X, Y$ equipped with Borel probability
measures $\mu,\nu$, and general Borel measurable cost functions
$c: X\times Y \to [0,\infty]$. This general setting is analyzed in
detail in \cite{BeLS09a}; section 2 below may serve as a
motivation for the arguments in the proof of Theorems 1.2 and 1.7
of \cite{BeLS09a} which pertain to the general duality theory.

\medskip

The second --- and longer --- part of the paper, consisting of sections 3 and 4 is of a quite different nature.

\medskip

Section 3 is devoted to illustrate a technical feature of
\cite[Theorem 4.2]{BeLS09a} by an explicit example. The technical
feature is the appearance of the singular part $\hh^s$ of the dual
optimizer $\hh\in L^1 (X\times Y, \pi)^{**}$ obtained in
(\cite[Theorem 4.2]{BeLS09a}). In Example 3.1 below we show that,
in general, the dual optimizer $\hh$ does indeed contain a
non-trivial singular part. In addition, this example allows to
observe in a rather explicit way how this singular part ``builds
up'', for an optimizing sequence $(\phi_n \oplus \psi_n)^\i_{n=1}
\in L^1 (X\times Y,\pi)$ which converges to $\hh$ with respect to
the weak-star topology. The construction of this example, which is
a variant of an example due to L.~Ambrosio and A.~Pratelli
\cite{AmPr03}, is rather longish and technical. Some motivation
for this construction will be given at the end of Section 2.

\medskip

Section 4 pertains to a  modified version of the duality relation in the Monge-Kantorovich transport problem. Trivial counterexamples such as \cite[Example 1.1]{BeLS09a} show that in the case of a measurable cost function $c:X\times Y\to [0,\infty]$ there may be a duality gap.
 The main result (Theorem 1.2) of \cite{BeLS09a} asserts that one may avoid this difficulty by considering a suitable relaxed form of the primal problem; if one does so, duality holds true in complete generality.
In a different vein, one may leave the primal problem unchanged,
and overcome the difficulties encountered in the above mentioned
simple example by considering a slightly modified dual problem
(cf.\ \cite[Remark 3.4]{BeLS09a}). In the last part of the article
we consider a certain twist of the construction given in section
3, which allows us to prove that this dual relaxation does not
lead to a general duality result.

\section{The finite case}\label{s2}

In this section we present the duality theory of optimal transport for the finite case: Let $X=Y=\{1,\ldots, N\}$ and let $\mu=\nu$ assign probability $N^{-1}$ to each of the points $1,\ldots, N$. Let $c=(c(i,j))_{i,j=1}^N$ be an $\Rp$-valued $N\times N$ matrix.

The problem of optimal transport then becomes the subsequent linear optimization problem
\begin{align}\label{1}
\langle c, \pi \rangle := \sum\iin \ \sum\jin \pi(i,j) \, c(i,j) \to \min, ~~~~ \pi\in\R^{N^2},
\end{align}
under the constraints
\begin{align*}
& \sum\jin \pi(i,j) =  N^{-1},  & i=1,\ldots, N, \\
& \sum\iin \pi(i,j) =  N^{-1}, & j=1,\ldots, N, \\
&  \ \pi(i,j)\geq 0,  & i,j=1,\ldots, N.
\end{align*}

Of course, this is an easy and standard problem of linear optimization; yet we want to treat it in some detail in
order to develop intuition and concepts for the general case considered in \cite{BeLS09a} as well as in section 3 .

For the two sets of \emph{equality} constraints we introduce $2N$ Lagrange multipliers $(\phi(i))\iin$ and
$(\psi(j))\jin$ taking values in $\mathbb{R}$, and for the inequality constraints (4) we introduce Lagrange
multipliers $(\rho_{ij})^N_{i,j=1}$ taking values in $\R_+$. The Lagrangian functional $L(\pi,\phi,\psi,\rho)$ then is given by
\begin{align*}
L(\pi,\phi,\psi,\rho)
 = & \sum\iin\sum\jin \cij \, \piij
\\ & -\sum\iin \phi(i) \left( \sum\jin \piij-N^{-1} \right)
\\ & - \sum\jin \psi (j) \left( \sum\iin \piij-N^{-1} \right)
\\ & - \sum\iin \sum\jin \rho(i,j) \piij,
\end{align*}
where the $\pi(i,j), \phi(i)$ and $\psi(j)$ range in $\R$, while the $\rho(i,j)$ range in $\R_+$.

It is designed in such a way that
\[
C(\pi):= \sup_{\phi,\psi,\rho} L(\pi,\phi,\psi,\rho) = \cpi +
\chi_{\Pi(\mu,\nu)}(\pi),
\]
where $\Pi(\mu,\nu)$ denotes the admissible set of $\pi$'s, i.e., the probability measures on $X\times Y$ with marginals $\mu$ and $\nu$, and $\chi_A(\,\ldot\,)$ denotes the indicator function of a set $A$ in the sense of convex function theory, i.e., taking the value $0$ on $A$, and the value $+\i$ outside of $A$.

In particular, we have
\begin{align*}
P := \inf_{\pi\in\mathbb{R}^{N^2}} C(\pi) = \inf_{\pi\in \mathbb{R}^{N^2}} \sup_{\phi,\psi,\rho} L(\pi,\phi,\psi,\rho),
\end{align*}
where $P$ is the optimal value of the primal optimization problem
(1).

To develop the duality theory of the primal problem (1) we pass from {\it inf sup L} to {\it sup inf L}.
Denote by $D(\phi,\psi,\rho)$ the dual function
\begin{align*}
D(\phi,\psi,\rho)
   =& \inf_{\pi\in\mathbb{R}^{N^2}} L (\pi,\phi,\psi,\rho)
\\ =& \inf_{\pi\in\mathbb{R}^{N^2}} \sum\iin \sum\jin \piij [\cij - \phi(i) - \psi(j) - \rho(i,j)]
\end{align*}
$$+ N^{-1} \left[ \sum\iin\phi(i) + \sum\jin\psi(j) \right].$$
Hence we obtain as the optimal value of the dual problem
\begin{equation}\label{FiniteDual}
D:= \sup_{\phi,\psi,\rho} D(\phi,\psi,\rho) =
(\E_\mu[\phi]+\E_\nu[\psi])-\chi_\Psi(\phi,\psi)
\end{equation}
where $\Psi$ denotes the admissible set of $\phi, \psi, \rho$, i.e.~satisfying
\[
\phi(i) + \psi(j) +\rho (i,j) = \cij, ~~~~ 1\leq i,j \leq N,
\]
for some non-negative ``slack variables'' $\varrho_{i,j}.$

Let us show that there is no duality gap, i.e., the values of $P$ and $D$ coincide. Of course, in the present
finite dimensional case, this equality as well as the fact that the {\it inf sup} (resp.\ {\it sup inf}) above is a
{\it min max} (resp.\ {\it a max min}) easily follows from general compactness arguments. Yet we want to verify things
directly using the idea of ``complementary slackness'' of the primal and the dual constraints (good references are, e.g.~\cite{PeSU08,EkTe99,AuEk06}).

We apply ``Fenchel's perturbation map'' to explicitly show the equality $P=D$.
Let $T:\R^{N^2}\to\R^N\times\R^N$ be the linear map defined as
\[
T\left( \big(\piij \big)_{1\leq i,j \leq N} \right) = \left( \left( \sum\jin \piij \right)\iin , \left( \sum\iin \piij
\right)\jin \right)
\]
so that the problem \eqref{1} now can be phrased as
\[
\cpi = \sum\iin\sum\jin \cij\, \piij \to min, ~~~~ \pi\in\Rp^{N^2},
\]
under the constraint
\[
T(\pi) = \left( (N^{-1}, \ldots, N^{-1}),(N^{-1}, \ldots, N^{-1}) \right).
\]
The range of the linear map $T$ is the subspace $E \subseteq\R^N \times\R^N$, of codimension 1, formed by the pairs
$(f,g)$ such that $\sum\limits\iin f(i)=\sum\limits\jin g(j),$ in other words $\mathbb{E}_\mu[f]=\mathbb{E}_\nu [g]$.
We consider $T$ as a map from $\R^{N^2}$ to $E$ and denote by $E_+$ the positive orthant of $E$.

Let $\Phi: E_+ \to [0,\i]$ be the map
\[
\Phi(f,g) = \inf \left\{ \cpi, \ \pi\in\Rp^{N^2}, \ T(\pi)=(f,g) \right\}.
\]
We shall verify explicitly that $\Phi$ is an $\R_+$-valued, convex, lower semi-continuous, positively homogeneous map on $E_+$.

The finiteness and positivity of $\Phi$ follow from the fact that, for $(f,g)\in E_+$, the set of $\pi\in\R_+^{N^2}$
with $T(\pi)=(f,g)$ is non-empty and from the non-negativity of $c$.
As regards the convexity of $\Phi$, let $(f_1,g_1),(f_2,g_2) \in E_+$ and find $\pi_1,\pi_2\in\Rp^{N^2}$
such that $T(\pi_1)=(f_1,g_1), \ T(\pi_2)=(f_2,g_2)$ and
$\langle c,\pi_1\rangle <\Phi(f_1,g_1)+\varepsilon$ as well as $\langle c,\pi_2\rangle <\Phi(f_2,g_2)+\varepsilon$. Then
$$\Phi\left(
\frac{(f_1,g_1)+(f_2,g_2)}{2}\right )
\leq \left\langle c, \frac{\pi_1 +\pi_2}{2}
\right\rangle < \frac{\Phi(f_1,g_1)+\Phi(f_2,g_2)}{2} +\varepsilon,$$ which proves the convexity of $\Phi$.

If $((f_n,g_n))^\infty_{n=1} \in E_+$ converges to $(f,g)$ find
$(\pi_n)^\infty_{n=1}$ in $\R_+^{N^2}$ such that $T(\pi_n)=(f_n,g_n)$ and $\langle c,\pi_n\rangle < \Phi(f_n,g_n)+n^{-1}$.
Note that $(\pi_n)^\infty_{n=1}$ is bounded in $\R^{N^2}_+$, so that there is a subsequence
$(\pi_{n_k})^\infty_{k=1}$ converging to $\pi\in\Rp^{N^2}$.
Hence $\Phi(f,g,)\leq \langle c,\pi\rangle$ showing the lower semi-continuity of $\Phi$.
Finally note that $\Phi$ is positively homogeneous,
i.e., $\Phi(\la f, \la g)=\la\Phi(f,g)$, for $\la\geq 0$. \\

The point $(f_0,g_0)$ with $f_0=g_0=(N^{-1},\dots ,N^{-1})$ is in $E_+$ and $\Phi$ is bounded in a neighbourhood $V$
of $(f_0,g_0).$ Indeed, fixing any $0<a<N^{-1}$
the subsequent set $V$ does the job
$$V=\{(f,g)\in E \ : \vert f(i)-N^{-1}\vert <a, \ \vert g(j)-N^{-1}\vert <a, \quad \mbox{for} \ 1\le i,j\le N\}.$$
The boundedness of the lower semi-continuous convex function
$\Phi$ on $V$ implies that the subdifferential of $\Phi$ at
$(f_0,g_0)$ is non-empty. Considering $\Phi$ as a function on
$\R^{2N}$ (by defining it to equal $+\infty$ on $\R^{2N}\backslash
E_+)$ we may find an element $(\hphi, \hpsi)\in \R^N\times\R^N$ in
this subdifferential. By the positive homogeneity of $\Phi$ we
have
\[
\Phi(f,g) \geq \lan{(\hphi, \hpsi),(f,g)}= \lan{\hphi,f}+\lan{\hpsi,g},
 \ \ \ \mbox{for} \ (f,g)\in\mathbb{R}^N\times\mathbb{R}^N,
\]
and
\[
P = \Phi(f_0,g_0) = \lan{\hphi,f_0}+\lan{\hpsi,g_0}.
\]
By the definition of $\Phi$ we therefore have, for each $\pi\in\Rp^{N^2}$,
\begin{align*}
\cpi & \geq \inf_{\tilde{\pi}\in\Rp^{N^2}}  \{\langle c,\tilde{\pi}\rangle : T(\pi)= T (\tilde{\pi})\}
\\ &  = \Phi(T(\pi))
\\ & \geq \langle T(\pi), (\hat{\phi},\hat{\psi})\rangle
\\ &  =  \sum\iin\sum \jin \pi(i,j)  \, [\hphi(i)+\hpsi(j)]
\end{align*}
so that
\begin{equation}
\cij \geq \hphi(i)+\hpsi(j),  \quad\quad\quad \mbox{for} \ 1\le i,j\le n.
\end{equation}
By compactness, there is $\hpi\in\Pi(\mu,\nu)$, i.e., there is an element $\hpi\in\Rp^{N^2}$ verifying $T(\hpi)=(f_0,g_0)$ such that
\begin{equation}
\lan{c,\hpi} = \lan{\hphi+\hpsi,\hpi}.
\end{equation}

Summing up, we have shown that $\hpi$ and $(\hphi,\hpsi)$ are primal and dual optimizers and that the value of the
primal problem equals the value of the dual problem, namely $\lan{\hphi+\hpsi,\hpi}$.

To finish this elementary treatment of the finite case, let us consider the case when we allow the cost function $c$ to
 take values in $[0,\i]$ rather than in  $[0,\i[$. In this case the primal problem simply loses some dimensions: for
the $(i,j)$'s where $\cij=\i$ we must have $\piij=0$ so that we consider
\begin{equation}
\langle c, \pi \rangle := \sum\iin \ \sum\jin \pi(i,j) \, c(i,j) \to \min, ~~~~ \pi\in\Rp^{N^2}, \nonumber
\end{equation}
where we now optimize over $\pi\in\R^{N^2}_+$ with $\piij=0$ if $\cij=\i$. For the problem to make sense we clearly must
have that there is at least one $\pi\in\Pi(\mu,\nu)$ with $\cpi<\i$. If this non-triviality condition is satisfied, the
above arguments carry over without any non-trivial modification.

\bigskip

We now analyze explicitly the well-known ``complementary slackness conditions'' and interpret them in the present context. For a pair $\hpi$ and $(\hphi,\hpsi)$ of primal and dual optimizers we have
\[
\cij > \hphi(i)+\hpsi(j)  ~\Rightarrow ~ \hpi(i,j)=0,
\]
and
\[
\hpi(i,j)>0~ \Rightarrow ~ \cij = \hphi(i)+\hpsi(j).
\]
Indeed, these relations follow from the admissibility condition $c \geq \hat{\phi}+\hat{\psi}$ and the duality relation $\lan{\hpi,c-(\hphi+\hpsi)}=0$.

This motivates the following definitions in the theory of optimal transport (see, e.g., \cite{RaRu96}
for (a) and \cite{ScTe08} for (b).)

\begin {definition}
Let $X=Y=\{1,\ldots,N\}$ and $\mu=\nu$ the uniform distribution on $X$ and $Y$ respectively, and let $c: X\times Y \to \Rp$ be given.
\begin{description}
\item[(a)] A subset $\Gamma\subseteq X\times Y$ is called ``cyclically $c$-monotone'' if, for $(i_1,j_1), \ldots, (i_n, j_n) \in \Gamma$ we have
\begin{align}\label{P6}
\sum_{k=1}^n c(i_k, j_k) \leq \sum_{k=1}^n c(i_k, j_{k+1}),
\end{align}
where $j_{n+1}=j_1$.
\item[(b)] A subset $\Gamma \subseteq X\times Y$ is called ``strongly cyclically $c$-monotone'' if there are functions $\phi,\psi$ such that $\phi(i)+\psi(j)\leq \cij$, for all $(i,j)\in X\times Y$, with equality holding true for $(i,j) \in \Gamma$.
\end{description}
\end{definition}

In the present finite setting, the following facts are rather obvious (assertion (iii) following from the above discussion):

\begin{description}
\item[(i)] The support of each primal optimizer $\hpi$ is cyclically $c$-monotone.
\item[(ii)] Every $\pi\in\Pi(\mu,\nu)$ which is supported by a cyclically $c$-monotone set $\Gamma$, is a primal
optimizer.
\item[(iii)] A set $\Gamma\subseteq X\times Y$ is cyclically $c$-monotone iff it is strongly cyclically $c$-monotone.
\end{description}

In general, one may ask, for a given Monge--Kantorivich transport optimization problem, defined on polish spaces $X,Y$, equipped with Borel probability measures
$\mu,\nu$, and a Borel measurable cost function $c:X\times Y\to [0,\i]$, the following natural questions:\\

{\bf (P)} Does there exist a primal optimizer to (1), i.e.~a Borel measure  $\hpi \in\Pi(\mu,\nu)$ with marginals $\mu,\nu$, such that
\begin{align*}\nonumber
\int\limits_{X\times Y}{c} \ d\widehat\pi=\inf\limits_{\pi\in\Pi(\mu,\nu)} \int\limits_{X\times Y} c \ d\pi=:P
\end{align*}
holds true?\\

{\bf (D)} Do there exist dual optimizers to \eqref{FiniteDual}, i.e.~Borel functions $(\hphi,\hpsi)$ in $\Psi(\mu,\nu)$ such that
\begin{align}\label{H4}
\int\limits_X\hphi \ d\mu + \int\limits_Y\hpsi \ d\nu =\sup\limits_{(\varphi,\psi)\in\Psi(\mu,\nu)} \left(\int\limits_X \varphi \ d\mu +\int\limits_Y\psi \ d\nu
\right) =:D,
\end{align}
where $\Psi(\mu,\nu)$ denotes the set of all pairs of $[-\i ,+\i [$-valued integrable Borel functions $(\phi,\psi)$ on $X,Y$ such that $\varphi(x) +\psi(y)
\le x(x,y)$, for all $(x,y)\in X\times Y$? \\

{\bf (DG)} Is there a duality gap, or do we have $P=D$, as it should -- morally speaking -- hold true?\\

These are three natural questions which arise in every convex optimization problem. In addition, one may ask the following two questions pertaining to the
special features of the Monge--Kantorovich transport problem.\\

{\bf (CC)} Is every cyclically $c$-monotone transport plan $\pi\in\Pi(\mu,\nu)$ optimal, where we call $\pi\in\Pi(\mu,\nu)$ cyclically $c$-monotone if there is
a Borel subset $\Gamma \subseteq X\times Y$ of full support $\pi(\Gamma)=1$, verifying condition \eqref{P6}, for any $(x_1,y_1),\dots ,(x_n,y_n)\in\Gamma$?\\

{\bf (SCC)} Is every strongly cyclically $c$-monotone transport plan $\pi\in\Pi(\mu,\nu)$ optimal, where we call $\pi\in\Pi(\mu,\nu)$ strongly cyclically $c$-monotone
if there are Borel functions $\phi :X\to [-\i ,+\i [$ and $\psi :Y\to[-\i ,+\i [$, satisfying $\phi(x)+\psi(y)\le c(x,y)$, for all $(x,y)\in X\times Y$, and
$\pi\{\phi +\psi =c\}=1$?

\medskip

Much effort has been made over the past decades to provide
increasingly general answers to the questions above. We mention
the work of R\"uschendorf \cite{R96} who adapted the notion of
cyclical monotonicity from Rockafellar \cite{Rock66}.
Rockafellar's work pertains to the case $c (x,y)=-\langle
x,y\rangle$, for $x,y\in\R^n$, while R\"uschendorf's work pertains
to the present setting of general cost functions $c$, thus
arriving at the notion of cyclical $c$-monotonicity. Intimately
related is the notion of the $c$-conjugate $\phi^c$ of a function
$\phi$.

We also mention G.~Kellerer's fundamental work on the duality theory; in \cite{Kell84} he established that $P=D$ provided that $c:X\times Y\to [0,\infty]$ is lower semi-continous, or merely Borel-measurable and uniformly bounded.

The seminal paper \cite{GaMc96} proves (among many other results)
that we have a positive answer to question (CC) above in the
following situation: every cyclically $c$-monotone transport plan
is optimal provided that the cost function $c$ is continuous and
$X,Y$ are compact subsets of $\R^n$. In \cite[Problem
2.25]{Vill03} it is asked whether this extends to the case
$X=Y=\R^n$ with the squared euclidian distance as cost function.
This  was answered independently in \cite{Prat07} and
\cite{ScTe08}: the answer to (CC) is positive for general polish
spaces $X$ and $Y$, provided that the cost function $c:X\times
Y\to [0,\infty]$ is continuous (\cite{Prat07}) or lower
semi-continuous and finitely valued (\cite{ScTe08}). Indeed, in
the latter case, a transport plan is optimal  if and only  if it
is strongly $c$-monotone.


\bigskip

Let us briefly resume the state of the art pertaining to the five questions above.

As regards the most basic issue, namely (DG) pertaining to the question whether duality makes sense at all, this is analyzed in detail --- building on a
lot of previous literature --- in section 2 of the
accompanying paper \cite{BeLS09a}: it is shown there that, for a properly relaxed version of the primal problem, question (DG) has an affirmative answer in a
perfectly general setting, i.e.~for arbitrary Borel-measurable cost functions $c: X\times Y\to [0,\i ]$ defined on the product of two  polish spaces $X,Y$, equipped
with Borel probability measures $\mu,\nu$.

\medskip

As regards question (P) we find the following situation: if the cost function $c: X\times Y\to [0,\i ]$ is {\it lower semi-continuous}, the answer to question (P)
is always positive. Indeed, for an optimizing sequence $(\pi_n)^\i_{n=1}$ in $\Pi(\mu,\nu)$, one may apply Prokhorov's theorem to find a weak limit
$\hpi =\lim_{k\to\i} \pi_{n_k}$. If $c$ is lower semi-continuous, we get
\begin{align*}
\int\limits_{X\times Y} c \ d\hpi \le \lim\limits_{k\to\i} \int\limits_{X\times Y} c \ d\pi_{n_k},
\end{align*}
which yields the optimality of $\hpi$.

On the other hand, if $c$ fails to be lower semi-continuous, there is little reason why a primal optimizer should exist (see, e.g., \cite[Example 2.20]{Kell84}).

\medskip

As regards (D), the question of the existence of a dual optimizer is more delicate than for the primal case (P): it was shown in \cite[Theorem 3.2]{AmPr03} that, for
$c: X\times Y\to \R_+$, satisfying a certain moment condition, one may assert the existence of {\it integrable} optimizers $(\hphi ,\hpsi)$. However, if one drops
this moment condition, there is little reason why, for an optimizing sequence $(\phi_n ,\psi_n)^\i_{n=1}$ in (D) above, the $L^1$-norms should remain bounded.
Hence there is little reason why one should be able to find {\it integrable} optimizers $(\hphi ,\hpsi)$ as shown by easy examples (e.g.\ \cite[Examples 4.4, 4.5]{BeSc08}), arising
in rather regular situations.

Yet one would like to be able to pass to {\it some kind of limit $(\hphi ,\hpsi)$}, whether these functions are integrable or not. In the case when $\hphi$ and/or
$\hpsi$ fail to be integrable, special care then has to be taken to give a proper sense to \eqref{H4}.

This situation was the motivation for the introduction of the notion of {\it strong cyclical $c$-monotonicity} in \cite{ScTe08}:
this notion (see (SCC) above) characterizes the optimality of a given $\pi\in\Pi(\mu,\nu)$ in terms of a ``complementary slackness condition'', involving some $(\phi ,\psi)\in\Psi
(\mu,\nu)$, playing the role of a dual optimizer $(\hphi,\hpsi)$. The crucial feature is that {\it we do not need any integrability of the functions
$\phi$ and $\psi$} for this notion to make sense. It was shown in \cite{BeSc08} that, also in situations where there are no integrable optimizers $(\hphi,\hpsi)$,
one may find Borel measurables functions $(\phi,\psi)$, taking their roles in the setting of (SCC) above.

This theme was further developed in \cite{BeSc08}, where it was shown that, for $\mu\otimes\nu$-a.s.~finite, Borel measurable $c:X\times Y\to [0,\i ]$, one may
find Borel functions $\hphi :X\to [-\i ,+\i )$ and $\hpsi :Y\to [-\i ,\i ),$ which are dual optimizers if we interpret \eqref{H4} properly: instead of considering
\begin{align}\label{H11}
\int\limits_X \hphi \ d\mu +\int\limits_Y \hpsi \, d\nu,
\end{align}
which needs integrability of $\hpsi$ and $\hpsi$ in order to make sense, we consider
\begin{align}\label{H12}
\int\limits_{X\times Y} (\hphi (x) +\hpsi (y))\,  d\pi (x,y),
\end{align}
where the transport plan $\pi\in\Pi(\mu,\nu)$ is assumed to have finite transport cost $\int_{X\times Y} c(x,y) d\pi (x,y) <\i$. If \eqref{H11} makes
sense, then its value coincides with the value of \eqref{H12}; the crucial feature is that, \eqref{H12} also makes sense in cases when \eqref{H11} does not make
sense any more as shown in \cite[Lemma 1.1]{BeSc08}. In particular, the value of \eqref{H12} does not depend on the choice of the transport plan $\pi\in\Pi(\mu,\nu)$,
provided $\pi$ has finite transport cost $\int_{X\times Y} c(x,y) d\pi(x,y) <\i$.

\bigskip

Summing up the preceding discussion on the existence (D) of a dual optimizer $(\hphi,\hpsi)$: this question has a -- properly interpreted -- positive answer
provided that the cost function $c:X\times Y \to [0,\i]$ is $\mu\otimes\nu$-a.s.~finite (\cite[Theorem 2]{BeSc08}).

But things become much more complicated if we pass to cost functions
$c:X\times Y \to [0,\i]$ assuming the value $+\i$ on possibly ``large'' subsets of $X\times Y$.

In \cite[Example  4.1]{BeLS09a} we exhibit an example, which is a variant of an example due to G.~Ambrosio and A.~Pratelli \cite[Example 3.5]{AmPr03}, of a lower semicontinuous cost function
$c:[0,1)\times [0,1)\to[0,\i]$, where $(X,\mu)=(Y,\nu)$ equals $[0,1)$ equipped with Lebesgue measure, for which there are no {\it Borel measurable} functions
$\hphi,\hpsi$ verifying $\hphi(x)+\hpsi(y)\le c(x,y)$, minimizing \eqref{H12} above.

In this example, the cost function $c$ equals the value $+\i$ on
``many'' points of $X\times Y=[0,1)\times [0,1).$
In fact, for each $x\in[0,1[$, there are precisely two points $y_1 ,y_2 \in[0,1[$ such that $c(x,y_1) <\i$ and $c(x,y_2)<\i$, while for all other
$y\in[0,1[$, we have $c(x,y)=\i$. In addition, there is an optimal transport plan $\hpi\in\Pi(\mu,\nu)$ whose support equals the set $\{(x,y)\in[0,1)\times [0,1):
c(x,y) <\i\}.$

In this example one may observe the following phenomenon: while there {\it do not} exist Borel measurable functions $\hphi :[0,1)\to [-\i ,+\i)$ and
$\hpsi :[0,1)\to[-\i, \i)$ such that $\hphi (x)+\hpsi(y)=c(x,y)$ on $\{c(x,y)<\i\}$, there {\it does} exist a Borel function $\hh:[0,1)\times [0,1)
\to[-\i,\i)$ such that $\hh(x,y)=c(x,y)$ on $\{c(x,y)<\i\}$ and such that $\hh(x,y)=\lim_{n\to\i} (\phi_n (x)+\psi_n(y))$ where $(\phi_n,\psi_n)^\i_{n=1}$
are properly chosen, bounded Borel functions. The point is that the limit holds true (only) in the norm of $L^1([0,1[\times [0,1[, \hpi)$ as well as $\hpi$-a.s.

In other words, in this example we are able to identify some kind of dual optimizer $\hh\in L^1 ([0,1)\times [0,1),\hpi)$ which, however, is not of the
form $\hh(x,y)=\hphi(x)+\hpsi(y)$ for some Borel functions $(\hphi,\hpsi)$, but only a $\hpi$-a.s.~limit of such functions $(\phi_n(x)+\psi_n(y))^\i_{n=1}$.

In \cite[Theorem 4.2]{BeLS09a} we established a result which shows
that much of the positive aspect of this phenomenon, i.e.~the
existence of an optimal $\hh
\in L^1(\hpi)$,
encountered in the context of the above example, can be carried over to a general setting.
For the convenience of the reader we restate this theorem and the notations required to   formulate it.

Fix a finite transport plan $\pi_0\in \Pi(\mu,\nu,c) := \left\{
\pi\in\Pi(\mu,\nu)  : \int_{X\times Y} c\,d\pi <\infty \right\}$.
We denote by $\Pi^{(\pi_0)}(\mu,\nu)$ the set of elements
$\pi\in\Pi(\mu,\nu)$ such that $\pi \ll \pi_0$ and $\big\|
\frac{d\pi}{d\pi_0} \big\|_{L^\infty(\pi_0)}<\infty$. Note that
$\Pi^{(\pi_0)}(\mu,\nu)=\Pi(\mu,\nu)\cap L^\infty(\pi_0) \subseteq
\Pi(\mu,\nu,c)$. We shall replace the usual Kantorovich
optimization problem over the set $\Pi(\mu,\nu,c)$
 by the optimization over the smaller
set $\Pi^{(\pi_0)}(\mu,\nu)$. Its value is
\begin{align}
 \label{Walters7}
 P^{(\pi_0)}
 &= \inf \{ \langle c,\pi\rangle =\textstyle{\int} c\, d\pi  : \pi\in\Pi^{(\pi_0)}(\mu,\nu)\}.
\end{align}
As regards the dual problem, we define, for $\eps>0$,
\begin{eqnarray}
    D^{(\pi_0,\eps)}
    &=& \sup\Big\{\int \phi\,d\mu+\int\psi\,d\nu:\  \phi\in L^1(\mu),\psi\in
    L^1(\nu),\nonumber\\
     &&\qquad\qquad\qquad    \int_{X\times Y} (\phi(x)+\psi(y) -c(x,y))_+\,d\pi_0\le\eps
        \Big\}\quad \textrm{and}\nonumber\\
    \label{Walters7D}
    D^{(\pi_0)}&=&\lim_{\eps\to 0}D^{(\pi_0,\eps)}.
\end{eqnarray}
Define the ``summing'' map $S$ by
\begin{align*}
S:  L^1(X,\mu) \times L^1(Y,\nu) &\to L^1 (X\times Y,\pi_0)\\
(\varphi,\psi) &\mapsto \varphi\oplus\psi,
\end{align*}
where $\phi\oplus \psi$ denotes the function $\phi(x)+\psi(y)$ on $X\times Y$.
Denote by $L_S^1(X\times Y,\pi_0)$ the $\|.\|_1$-closed linear subspace of $L^1(X\times Y,\pi_0)$ spanned by
$S(L^1(X,\mu)\times L^1(Y,\nu))$. Clearly $L_S^1(X\times Y,\pi_0)$ is a Banach space under the norm $\|.\|_1$
induced by $L^1(X\times Y,\pi_0)$.

We shall also need the bi-dual $L_S^1(X\times Y,\pi_0)^{**}$ which
may be identified with a subspace of $L^1(X\times Y,\pi_0)^{**}$.
In particular, an element $h\in L_S^1(X\times Y,\pi_0)^{**}$ can
be decomposed into $h=h^r+h^s,$ where $h^r\in L^1(X\times Y,\pi_0)
$ is the regular part of the finitely additive measure $h$ and
$h^s $ its purely singular part.

\begin{theorem}\label{LeonardDuality}
Let $c: X\times Y \to [0,\infty]$ be Borel measurable, and let
$\pi_0\in\Pi(\mu,\nu,c)$ be a finite transport plan.
We have
\begin{align}\label{NiceEq}
P^{(\pi_0)}=D^{(\pi_0)}.
\end{align}
There is an element $\hat h\in L_S^1(X\times Y,\pi_0)^{**}$ such
that $\hat h\leq c$ 
and
$$D^{(\pi_0)}=\langle \hat h, \pi_0\rangle.$$
If $  \pi \in \Pi^{(\pi_0)}(\mu,\nu)$ (identifying $
\pi$ with $\frac{d  \pi}{d\pi_0}$) satisfies $\int c\, d \pi \le
P^{(\pi_0)} +\alpha$ for some $\alpha \geq 0$, then
\begin{equation}\label{eq-1}
    |\langle \hat h^s,   \pi\rangle |\leq \alpha.
\end{equation}
In particular, if $  \pi$ is an optimizer of \eqref{Walters7},
then $\hat h^s $ vanishes on the set $\{\frac{d \pi}{d\pi_0}>
0\}$.
\\
In addition, we may find a sequence of elements $(\varphi_n,\psi_n)\in
L^1(\mu)\times L^1(\nu)$  such that
\begin{align*}
\varphi_n\oplus\psi_n\to \hat h^r,\ \pi_0\mbox{-a.s.},\qquad
\|(\varphi_n\oplus\psi_n-\hat h^r)_+\|_{L_1(\pi_0)}\to 0\
 \end{align*}
 and
\begin{align}\label{Walters9}
\lim_{\delta\to0}\sup_{A\subseteq X\times Y,\pi_0(A)<\delta}\lim_{n\to \infty}
-\langle (\varphi_n\oplus\psi_n)\I_A,\pi_0\rangle = \|\hat h^s\|_{L_1(\pi_0)^{**}}.
\end{align}
\end{theorem}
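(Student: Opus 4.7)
My plan is to extend the Fenchel-type duality of Section \ref{s2} to the Banach space setting on $L^1(X\times Y,\pi_0)$, and extract $\hat h$ as a weak-$*$ cluster point in the bidual $L_S^1(\pi_0)^{**}$ of an almost-maximizing sequence $\varphi_n\oplus\psi_n$ for $D^{(\pi_0)}$.

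First I would establish $P^{(\pi_0)}=D^{(\pi_0)}$ by a Fenchel--Rockafellar argument on $L_S^1(\pi_0)$, with $F(h)=-\int h\,d\pi_0$ and $G(h)$ the indicator of the $\eps$-relaxed admissible set $\{h:\int(h-c)_+\,d\pi_0\le\eps\}$; the $\eps$-relaxation supplies the continuity at an interior point of $\mathrm{dom}\,F$ that rules out a duality gap, and letting $\eps\to 0$ recovers $P^{(\pi_0)}$. Alternatively one can run a Hahn--Banach/subdifferential argument paralleling the treatment of $\Phi$ in Section \ref{s2} on the infinite-dimensional ordered Banach space $L_S^1(\pi_0)$. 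With $P^{(\pi_0)}=D^{(\pi_0)}$ in hand, pick $(\varphi_n,\psi_n)\in L^1(\mu)\times L^1(\nu)$ with $\int(\varphi_n\oplus\psi_n-c)_+\,d\pi_0\to 0$ and $\int\varphi_n\,d\mu+\int\psi_n\,d\nu\to D^{(\pi_0)}$, and set $h_n:=\varphi_n\oplus\psi_n$. Since $(h_n)_+$ is $L^1$-dominated by $c+o(1)$ and $\int h_n\,d\pi_0$ is bounded, $(h_n)$ is bounded in $L_S^1(\pi_0)$, and Banach--Alaoglu yields a weak-$*$ cluster point $\hat h\in L_S^1(\pi_0)^{**}$. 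Because the functional $g\mapsto\int g\,d\pi_0$ lies in $L_S^1(\pi_0)^*$, one gets $\langle\hat h,\pi_0\rangle=D^{(\pi_0)}$. The Yosida--Hewitt decomposition gives $\hat h=\hat h^r+\hat h^s$; uniform integrability of $(h_n)_+$ (dominated by $c$) forces the positive part of $\hat h$ to be concentrated on $\hat h^r\le c$ $\pi_0$-a.s., while any singular mass must arise from the negative excursions of $h_n$, whence $\hat h^s\le 0$ as a functional on the positive cone of $L^\infty(\pi_0)$.

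For the complementary-slackness bound \eqref{eq-1}, fix any $\pi\in\Pi^{(\pi_0)}(\mu,\nu)$. Because $\pi$ has marginals $\mu,\nu$, Fubini gives $\int h_n\,d\pi=\int\varphi_n\,d\mu+\int\psi_n\,d\nu\to D^{(\pi_0)}=P^{(\pi_0)}$, so pairing with the weak-$*$ limit yields $\langle\hat h,\pi\rangle=P^{(\pi_0)}$. Decomposing, $\langle\hat h^s,\pi\rangle=P^{(\pi_0)}-\int\hat h^r\,d\pi\ge P^{(\pi_0)}-\int c\,d\pi\ge-\alpha$, while $\hat h^s\le 0$ and $\pi\ge 0$ give $\langle\hat h^s,\pi\rangle\le 0\le\alpha$; hence $|\langle\hat h^s,\pi\rangle|\le\alpha$. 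For an optimizer ($\alpha=0$) both bounds collapse to $\langle\hat h^s,\pi\rangle=0$, and since $\hat h^s$ acts nonpositively on the positive cone, splitting $\tfrac{d\pi}{d\pi_0}=\tfrac{d\pi}{d\pi_0}\I_A+\tfrac{d\pi}{d\pi_0}\I_{A^c}$ forces both pieces to pair to $0$ with $\hat h^s$, localizing its vanishing to the carrier $\{d\pi/d\pi_0>0\}$.

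For the last assertion I would apply Mazur's lemma to convert the weak-$*$ convergence of $h_n\to\hat h$ into norm convergence of the regular part along convex combinations, then diagonally extract a $\pi_0$-a.s.\ subsequence. The one-sided estimate $\|(\varphi_n\oplus\psi_n-\hat h^r)_+\|_1\to 0$ reflects that the only obstruction to $L^1$-convergence is the negative singular mass. Formula \eqref{Walters9} is then essentially the Yosida--Hewitt characterization of $\|\hat h^s\|$: since $\hat h^r\in L^1(\pi_0)$ has absolutely continuous integral, integrals over sets of vanishing $\pi_0$-measure see only the singular negative mass, accessed by first restricting to small $A$ and then letting $n\to\infty$. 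The main obstacle will be this last coordination---simultaneously arranging $\pi_0$-a.s.\ convergence, one-sided $L^1$ convergence from above, and the concentration identity \eqref{Walters9}---which requires a careful Koml\'os/biting-type extraction that cleanly separates the regular and singular contributions of the weak-$*$ limit.
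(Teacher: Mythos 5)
This theorem is \emph{not proved in the present paper}: it is restated verbatim from \cite[Theorem 4.2]{BeLS09a} ``for the convenience of the reader,'' so there is no in-text proof to compare against. What can be assessed is whether your proposal is sound and consistent with the structure the paper motivates (Section~\ref{s2} and the subsequent discussion). In broad strokes it is: the Fenchel/perturbation route to $P^{(\pi_0)}=D^{(\pi_0)}$ on $L^1_S(\pi_0)$, the extraction of $\hat h$ as a $\sigma(L^1_S(\pi_0)^{**},L^1_S(\pi_0)^{*})$ cluster point of an almost-optimal sequence $h_n=\varphi_n\oplus\psi_n$ (which is exactly what item~$(\delta)$ in Section~\ref{SingPart} records), the Yosida--Hewitt split, and the complementary-slackness chain $\langle\hat h,\pi\rangle=D^{(\pi_0)}=P^{(\pi_0)}$ combined with $\hat h^r\le c$ a.s.\ and $\hat h^s\le 0$ to squeeze $-\alpha\le\langle\hat h^s,\pi\rangle\le 0$ are all correct and are the intended mechanism. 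Two things deserve care: the cluster point requires a subnet (the $w^*$-topology of $L^1_S(\pi_0)^{**}$ is not sequentially compact on bounded sets in general), and the inequality $\hat h^r\le c$ is not free --- it is where the uniform integrability of $(h_n)_+$, via $\int(h_n-c)_+\,d\pi_0\to 0$ and $c\in L^1(\pi_0)$, enters.

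There is, however, a genuine misstep in the last paragraph. Mazur's lemma converts \emph{weak} convergence in a Banach space into norm convergence of convex blocks; it does not apply to $\sigma(L^1{}^{**},L^\infty)$-convergence of a sequence in $L^1$ whose limit $\hat h$ lies outside $L^1$. In fact, if $\hat h^s\neq 0$ then $(h_n)$ is \emph{not} relatively weakly compact in $L^1(\pi_0)$, and no convex combinations of the $h_n$ can converge in $L^1$-norm, precisely because the non-UI negative parts must leak singular mass. So the Mazur step would not prove anything and should be dropped. The Koml\'os/biting extraction you mention at the end is the right tool, and the two suggestions as written are mutually inconsistent. Concretely: Koml\'os gives a subsequence whose Ces\`aro means (still of the form $\varphi'\oplus\psi'$) converge $\pi_0$-a.s.\ to some $g\in L^1(\pi_0)$, and one then has to identify $g=\hat h^r$; the UI of $(h_n)_+$ gives the one-sided estimate $\|(\varphi_n\oplus\psi_n-\hat h^r)_+\|_{L^1(\pi_0)}\to 0$; and the biting lemma supplies shrinking sets $A_k$ onto which the escaping negative mass concentrates, which is exactly how~\eqref{Walters9} is obtained. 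Spelling out this three-way coordination (a.s.\ limit identification, one-sided $L^1$ convergence, singular-mass concentration) is the substantive content of the last assertion and is not dispatched by citing Mazur.
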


The assertion of the theorem extends the phenomenon of \cite[Example  4.1]{BeLS09a} to a general setting. There is, however, one additional complication, as compared
to the situation of this specific example: in the above theorem we only can assert that we find the optimizer $\hh$ in $L^1(\hpi)^{**}$ rather than
in $L^1(\hpi)$. The question arises whether this complication is indeed unavoidable. The purpose of the subsequent section is to construct an example showing
that the phenomenon of a non-vanishing singular part $\hh^s$ of $\hh=\hh^r + \hh^s$ may indeed arise in the above setting. In
addition, the example gives a good illustration of the subtleties of the situation described by the theorem above.

\section{The singular part of the dual optimizer}\label{SingPart}

In this section we refine the construction of Examples 4.1 and 4.3 in \cite{BeLS09a} (which in turn are variants of an example due to G.~Ambrosio and
A.~Pratelli \cite[Example 3.2]{AmPr03}). We assume that the reader is familiar with these examples and freely use the notation from this paper.
\\
In particular, for an irrational $\alpha\in[0,1)$ we write, for $k\in\Z,\footnote{In \cite{BeLS09a} the constructions are carried out for $\N$ instead of $\Z$, but for our purposes the latter choice turns out to be better suited.}$
\begin{align}\label{BB1}
\begin{split}
\varrho_k(x)=1  + \# \{0\le i<k:x\oplus i\alpha\in [0,\tfrac12 )\} \\
- ~ \# \{0\le i<k: x\oplus i\alpha\in [\tfrac12 ,1)\}, \hspace{-0.1cm}
\end{split}
\end{align}
where, for $k<0$, we mean by $0\le i<k$ the set $\{k+1,k+2,\ldots ,0\}$ and $\oplus$ denotes addition modulo 1. We also recall that the function $h:[0,1)\times[0,1)\to\Z$ is defined in \cite[Example  4.3]{BeLS09a} as
\begin{align}\label{dxy}
h(x,y)=
\begin{cases}
\varrho_k(x), & k\in\Z \ \mbox{and} \ y=x\oplus k\alpha \\
\i, & \mbox{otherwise.}
\end{cases}
\end{align}
In  \cite[Example  4.3]{BeLS09a} we considered the $[0,\i]$-valued cost function $c(x,y):= h_+(x,y).$
We now construct an example restricting $h_+(x,y)$ to a certain subset of $[0,1)\times [0,1).$
\begin{example}
{\it
There is an irrational $\alpha\in [0,1)$ and a map $\tau :[0,1)\to\Z$ such that, for
\begin{align*}
&\Gamma_0=\{(x,x),x\in [0,1)\}, \\
&\Gamma_1=\{(x,x\oplus \alpha):x\in [0,1)\}, \\
&\Gamma_\tau=\{(x,x\oplus \tau(x)\alpha):x\in [0,1)\}
\end{align*}
and letting
\begin{align*}
c(x,y)=
\begin{cases}
h_+(x,y),  & \mbox{for} \ x\in \Gamma_0 \cup \Gamma_1 \cup \Gamma_\tau \\
\i , & \mbox{otherwise}
\end{cases}
\end{align*}
the following properties are satisfied.
\begin{enumerate}
\item
The maps
\begin{align*}
T^0_\alpha (x)=x, \qquad T^1_\alpha (x) =x\oplus \alpha, \qquad
T^{(\tau)}_\alpha (x)=x\oplus (\tau(x)\, \alpha)
\end{align*}
are measure preserving bijections from $[0,1)$ to $[0,1).$
Denote by $\pi_0,\pi_1,\pi_\tau$ the corresponding transport plans in $\Pi(\mu,\nu)$, i.e.
\begin{align*}
\pi_0=(id, id)_\# \mu, \quad \pi_1=(id, T_\alpha)_\#\mu, \quad
\pi_\tau=(id, T^{(\tau)}_\alpha)_\#\mu,
\end{align*}
and let $\pi=(\pi_0+\pi_1+\pi_\tau)/ 3.$
\item The transport plans $\pi_0$ and $\pi_1$ are optimal while $\pi_\tau$ is not. In fact, we have
\begin{align}\label{Seite10}
\langle c,\pi_0\rangle=\langle c,\pi_1\rangle=1 \ \mbox{while} \
\langle c,\pi_\tau\rangle \geq\langle h,\pi_\tau\rangle >1.
\end{align}
\item There is a sequence $(\varphi_n, \psi_n)_{n=1}^\infty$ of bounded Borel functions such that
\begin{align}
    & (a) \ \varphi_n(x)+\psi_n(y)\leq c(x,y), \quad \mbox{for }x\in X,y\in
    Y,\label{iii,a}\\
    & (b) \ \lim_{n\to\i} \Big(\int_X\phi_n(x) \ d\mu(x)+\int_Y\psi_n(y), d\nu(y)\Big)=1,
    \label{iii,b}\\
    & (c) \ \lim_{n\to\i}(\varphi_n(x)+\psi_n(y)) = h(x,y), \ \pi\mbox{-almost surely.}\label{iii,c}
\end{align}

\item Using the notation of  \cite[Theorem  4.2]{BeLS09a} we find
that for each dual optimizer $\hh\in L^1(\pi)^{**},$ which
decomposes as $\hh=\hh^r+\hh^s$ into its regular part $\hh^r \in
L^1 (\pi)$ and its purely singular part $\hh^s\in L^1 (\pi)^{**},$
we have
\begin{align}\label{tag4}
\hh^r=h, \ \pi\mbox{-a.s.,}
\end{align}
and the singular part $\hh^s$ satisfies
$\|\hh^s\|_{L^1(\pi)^{**}}=\langle h,\pi_\tau\rangle-1>0$. In
particular, the singular part $\hh^s$ of $\hh$ does not vanish.
The finitely additive measure $\hh^s$ is supported by
$\Gamma_{\tau}$, i.e.\ $\langle \hh^s, \I_{\Gamma_0}+\I_{\Gamma_1}
\rangle=0.$
\end{enumerate}
}
\end{example}

\medskip

We shall use a special irrational $\alpha \in [0,1)$, namely
$$\alpha =\sum_{j=1}^\infty \frac 1{M_j}, $$ where $M_j= m_1m_2\ldots m_j= M_{j-1}m_j,$ and $(m_j)_{j=1}^\infty$ is a sequence of prime numbers $m_j\geq 5$
tending sufficiently fast to infinity, to be specified below. We let
$$\alpha_n:= \sum_{j=1}^n\frac1{M_j}, $$ which, of course, is a rational number.

We will need the following lemma. We thank Leonhard Summerer for showing us the proof of Lemma \ref{RelPrimeLemma}.

\begin{lemma}\label{RelPrimeLemma}
It is possible to choose a sequence $m_1, m_2,\ldots$ of primes growing arbitrarily fast to infinity, such that with $M_1 =m_1, M_2=m_1\cdot m_2, \ldots, M_n=m_1 \cdots m_n,\ldots$ we have, for each
$n\in \N,$
$$\sum_{j=1}^n\frac1{M_j}=\frac{P_n}{M_n},$$ with  $P_n$ and $M_n$ relatively prime.
\end{lemma}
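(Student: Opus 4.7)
\begin{sketchofproof}{}
The plan is to construct the primes $m_n$ inductively, exploiting the recursion
\[
P_n = m_n P_{n-1} + 1, \qquad n\geq 1,
\]
(with the convention $P_0=0$), which is immediate from $\sum_{j=1}^n 1/M_j = P_{n-1}/M_{n-1}+1/M_n$.

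This recursion yields the desired coprimality almost for free. Reducing modulo $m_n$ gives $P_n\equiv 1\pmod{m_n}$, so $\gcd(P_n,m_n)=1$ holds automatically; the freshly adjoined prime never causes trouble. For $k<n$, reduction modulo $m_k$ gives $P_n\equiv m_n P_{n-1}+1\pmod{m_k}$, and by the induction hypothesis $P_{n-1}$ is a unit mod $m_k$, so the requirement $\gcd(P_n,m_k)=1$ is equivalent to the single linear constraint
\[
m_n \not\equiv -P_{n-1}^{-1}\pmod{m_k}.
\]

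At stage $n$ I therefore need $m_n$ to be a prime which (a) exceeds a prescribed bound $B_n$, thereby guaranteeing arbitrarily fast growth, and (b) avoids one non-zero residue mod each of $m_1,\dots,m_{n-1}$. By the Chinese Remainder Theorem the residues mod $M_{n-1}$ that satisfy (b) and are moreover coprime to $M_{n-1}$ form a non-empty union of $\prod_{k<n}(m_k-2)\geq 3^{n-1}$ classes, since mod $m_k$ one excludes only the two distinct residues $0$ and $-P_{n-1}^{-1}$, leaving $m_k-2\geq 3$ admissible ones. Dirichlet's theorem on primes in arithmetic progressions then furnishes infinitely many primes in any such class, and one selects any such prime above $B_n$. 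The induction is started with any prime $m_1\geq 5$, which gives $P_1=1$.

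No real obstacle arises; the only point worth isolating is that $-P_{n-1}^{-1}$ is non-zero mod $m_k$, which is immediate from $P_{n-1}$ being a unit, ensuring that the two excluded residues mod $m_k$ are genuinely distinct and that admissible classes coprime to $M_{n-1}$ do exist. The prescribed growth speed of $(m_n)$ is controlled purely through the bounds $B_n$ and is decoupled from the coprimality mechanism.
\end{sketchofproof}
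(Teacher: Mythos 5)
Your proof is correct, and it takes a genuinely different and arguably cleaner route than the paper's. The paper writes out $P_n = m_2\cdots m_n + m_3\cdots m_n + \cdots + m_n + 1$ explicitly and imposes a specific global pattern of congruences --- $m_{i+1}\equiv 1\ (m_i)$ and $m_{i+j}\equiv -1\ (m_i)$ for $j\geq 2$ --- then verifies that the resulting sums, reduced mod $m_k$, alternate in sign and never hit $0$. Your argument instead exploits the one-step recursion $P_n = m_nP_{n-1}+1$, which localizes the coprimality condition: assuming $\gcd(P_{n-1},M_{n-1})=1$, the requirement $\gcd(P_n,m_k)=1$ for $k<n$ is just the single forbidden residue $m_n\not\equiv -P_{n-1}^{-1}\ (m_k)$, and $\gcd(P_n,m_n)=1$ is automatic since $P_n\equiv 1\ (m_n)$. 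You therefore do not need to prescribe target residues at all; you only need to observe that the bad residues mod $M_{n-1}$ form a thin set, so a CRT count (using $m_k\geq 5$, hence $m_k-2\geq 3$ admissible residues mod each $m_k$) together with Dirichlet always leaves room for an arbitrarily large prime $m_n$. This sidesteps the paper's ad hoc congruence scheme and the alternating-sum verification entirely, at the harmless cost of producing a sequence that is less explicitly described. Both proofs use the same external tools (CRT and Dirichlet) in the same role; the difference is that the paper fixes a particular solution while you show the solution set is nonempty by counting.
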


\begin{proof}{}
We have
$$\sum_{j=1}^n\frac1{M_j}= \frac{m_2 \ldots m_{n} + \ldots +m_{n}+1}{M_n}=:\frac{P_n}{M_n},$$
thus $P_n$ and $M_n$ are relatively prime, if and only if
\begin{align}\label{mFirst}
m_1& \nmid &  m_2 \cdots m_{n}+\ & m_3 \cdots m_{n} + \ldots  + && m_{n}+1&\\
m_2& \nmid   &                              & m_3 \cdots m_{n} + \ldots  + && m_{n}+1&\\
      & \vdots      & &  &&  \vdots & \\                  \label{mLast}
m_{n-1} & \nmid &    & &&  m_n+1.&
\end{align}
We claim that these conditions are, e.g., satisfied provided that we choose $m_1,m_2,\ldots$ such that $m_i \geq 3$ and
\begin{align}\label{KongOne}
m_{i+1}&\equiv +1 ~(m_i)\\
m_{i+j}&\equiv - 1 ~ (m_i) \mbox{ if $j\geq 2$}.\label{KongTwo}
\end{align}
for all $i\geq 1$. Indeed \eqref{KongOne}, \eqref{KongTwo} imply that for $k\in\{1,\ldots,n-1\}$ we have modulo $( m_k)$
\begin{align*}
m_{k+1}\cdots m_n &+& m_{k+2}\cdots m_n& +&m_{k+3}\cdots m_n&  +&\ldots&+& m_n & +&1 & \equiv\\
 (\pm   1)           &+&  (\pm 1)                  & +& (\mp1)&  +&\ldots&+& (-1) & +&(+1)  &,
\end{align*}
where in the second line the $(n-k+1)$ summands start to alternate after the second term. Thus, for even $n-k$, this amounts to
\begin{align*}
m_{k+1}\cdots m_n &+& m_{k+2}\cdots m_n& +&m_{k+3}\cdots m_n&  +&\ldots&+& m_n & +&1 & \equiv\\
 (-   1)           &+&  (-1)                  & +& (+1)&  +&\ldots&+& (-1) & +&(+1)  &\equiv -1,
\end{align*}
while we obtain, for odd $n-k$,
\begin{align*}
m_{k+1}\cdots m_n &+& m_{k+2}\cdots m_n& +&m_{k+3}\cdots m_n&  +&\ldots&+& m_n & +&1 & \equiv\\
 (+  1)           &+&  (+1)                  & +& (-1)&  +&\ldots&+& (-1) & +&(+1)  &\equiv +2
\end{align*}
Hence \eqref{mFirst}-\eqref{mLast} are satisfied as the $m_n$ where chosen such that $m_n > 2$.

We use induction to construct a sequence of primes  satisfying \eqref{KongOne} and \eqref{KongTwo}. Assume that $m_1,\ldots, m_i$ have been defined.
By the chinese remainder theorem  the system of congruences
$$ x\equiv -1 \ (m_1), \ldots,\quad x\equiv -1 \ (m_{i-1}),\quad x\equiv +1 \ (m_i)$$
has a solution $x_0 \in \{1, \ldots, m_1\ldots m_i\}$. By
Dirichlet's theorem, the arithmetic progression $x_0+k m_1\ldots
m_i, k\in\N$ contains infinitely many primes, so we may pick one
which is as large as we please. The induction continues.
   \end{proof}

For $\beta\in [0,1),$ denote by $T_{\beta}:[0,1)\to [0,1),
T_\beta(x):= x \oplus\beta$ the addition of $\beta$ modulo 1. With
this notation we have $T_{\alpha_n}^{M_n}=id$ and, by Lemma
\ref{RelPrimeLemma}, it is possible to choose $m_1,\ldots,m_n$ in
such a way that $M_n $ is the smallest such number in $\N$. Our
aim is to construct a function $\tau:[0,1)\to \Z$ such that the
map
\begin{equation*}
T_\alpha^{(\tau)}: \left\{
    \begin{array}{rcl}
      [0,1) & \to & [0,1) \\
      x & \mapsto & T_\alpha^{(\tau)} (x)
=T_\alpha^{\tau(x)}(x) \\
    \end{array}
    \right.,
 \end{equation*}
defines, up to a $\mu$-null set, a measure preserving bijection on
$[0,1)$, and such that the corresponding transport plan
$\pi_\tau\in \Pi(\mu,\nu)$, given by $\pi_\tau=(id,
T_\alpha^{(\tau)})_{\#}\mu$, has the properties listed above with
respect to the cost function $c(x,y)$ which is the restriction of
the function $h_+(x,y)$ to $\Gamma_0\cup\Gamma_1\cup\Gamma_\tau.$
We shall do so by an inductive procedure, defining bounded
$\Z$-valued functions $\tau_n$ on $[0,1)$ such that the maps
$T_{\alpha_n}^{(\tau_n)}$ are measure preserving bijections on
$[0,1)$. The map $T_\alpha ^{(\tau)}$ then will be the limit of
these $T_{\alpha_n}^{(\tau_n)}$.

\medskip\noindent{\bf Step n=1:} Fix a prime $M_1=m_1\geq 5$, so
that $\alpha_1= \frac1{M_1}$. Define
$$I_{k_1}:=\left[\tfrac{k_1-1}{M_1},\tfrac{k_1}{M_1}\right), \ k_1=1,\ldots, M_1,$$
so that $(I_{k_1})_{k_1=1}^{M_1}$ forms a partition of $[0,1)$  and $T_{\alpha_1}$ maps
$I_{k_1}$ to $ I_{k_1+ 1}$, with the convention  $M_1 +1=1$. We also introduce the notations
$$L^1:=[0, \tfrac12 - \tfrac{1}{2M_1}) \ \mbox{and} \
R^1:=[\tfrac12 + \tfrac{1}{2M_1},1)$$ for the segments left and right of the middle interval $$\IM^1:=I_{(M_1+1)/2}=[\tfrac12 - \tfrac1{2M_1},
\tfrac12 +\tfrac1{2M_1}).$$
We define the functions $\varphi^1, \psi^1$ on $[0,1)$ such that
$\varphi^1(x)+\psi^1(x)\equiv 1$ and
$$ \varphi^1(x)+\psi^1(T_{\alpha_1}(x))=
\begin{cases}
0& x \in L^1\\
1& x \in \IM^1 \\
2& x \in R^1\\
\end{cases}
$$
which leads to the relation
$$ \varphi^1(T_{\alpha_1}(x))=\varphi^1(x)+
\begin{cases}
1,& x \in L^1,\\
0,& x \in \IM^1, \\
-1,& x \in R^1. \\
\end{cases}
$$
Making the choice $\varphi^1\equiv 0$ on $I_{1}$ this leads to
\begin{align}\label{choice}
\varphi^1(x)&=\begin{cases}
k_1-1, & x \in I_{k_1}, k_1\in\{1,\ldots, (M_1+1)/2\},\\
M_{1}+1-k_1, & x \in I_{k_1}, k_1\in\{ (M_1+3)/2, M_1\},
\end{cases}\\
\psi^1(x)&=1-\varphi^1(x).\nonumber
\end{align}
The function $\varphi^1$ starts at $0$, increases until the middle interval, stays constant when stepping to the interval right of the middle, and then
decreases, reaching $1$ on the final interval $I_{M_1}$.

The idea is to define the map $\tau_1 : [0,1)\to \Z$ in such a way
that the map
\begin{equation*}
T_{\alpha_1}^{(\tau_1)}: \left\{
    \begin{array}{rcl}
      [0,1) & \to & [0,1) \\
      x & \mapsto &T_{\alpha_1}^{\tau_1(x)}(x) \\
    \end{array}
    \right.,
 \end{equation*}
 is a measure preserving bijection enjoying the following property: the map
    $$
x\mapsto \phi^1 (x) +\psi^1( T_{\alpha_1}^{(\tau_1)} (x)),
    $$
equals the value two on a large set while it has concentrated a negative mass which is close to $-1$ on a small set.
\\
This can be done, e.g., by shifting the first interval $I_1$ to
the interval $I_{(M_1 -1)/2}$, which is left of the middle one,
while  we shift the intervals $I_2,\dots ,I_{(M_1-1)/2}$ by one
interval to the left. On the right hand side of $[0,1)$ we proceed
symmetrically while the middle interval simply is not moved.

\par\medskip
\begin{center}
\par\vskip 1cm

\scalebox{1} 
{
\begin{pspicture}(0,-3.3767188)(11.61375,1.6667187)
\definecolor{color2341}{rgb}{0.0,0.8,0.0}
\definecolor{color2382}{rgb}{0.0,0.4,1.0}
\definecolor{color2392}{rgb}{1.0,0.2,0.4}
\definecolor{color3217}{rgb}{0.2,0.8,0.0}
\psbezier[linewidth=0.02,linecolor=color3217,arrowsize=0.093cm
2.0,arrowlength=1.4,arrowinset=0.4]{<-}(5.6409373,-0.84328127)(5.8373675,-0.64800173)(5.9609375,-0.38502038)(5.738854,-0.36415082)(5.516771,-0.34328124)(5.1409373,-0.41632473)(5.619271,-0.8232812)
\psline[linewidth=0.02cm,linestyle=dashed,dash=0.16cm
0.16cm](6.1409373,-0.86328125)(6.1609373,1.6367188)
\psline[linewidth=0.02cm,linestyle=dashed,dash=0.16cm
0.16cm](5.1609373,-0.84328127)(5.1609373,1.6567187)
\psarc[linewidth=0.02,linecolor=color2341,arrowsize=0.0512cm
2.0,arrowlength=2.08,arrowinset=0.47]{<-}(10.2109375,-1.0132812){0.47}{12.264773}{159.77515}
\psarc[linewidth=0.02,linecolor=color2341,arrowsize=0.0512cm
2.0,arrowlength=2.08,arrowinset=0.47]{<-}(9.2109375,-1.0132812){0.47}{12.264773}{159.77515}
\psarc[linewidth=0.02,linecolor=color2341,arrowsize=0.0512cm
2.0,arrowlength=2.08,arrowinset=0.47]{<-}(8.230938,-0.97328126){0.47}{12.264773}{159.77515}
\psarc[linewidth=0.02,linecolor=color2341,arrowsize=0.0512cm
2.0,arrowlength=2.08,arrowinset=0.47]{<-}(7.1709375,-0.97328126){0.47}{12.264773}{159.77515}
\psarc[linewidth=0.02,linecolor=color2341,arrowsize=0.0712cm
2.0,arrowlength=2.08,arrowinset=0.47]{<-}(8.690937,0.68671876){2.63}{-144.46233}{-36.060425}
\psarc[linewidth=0.02,linecolor=color2341,arrowsize=0.0512cm
3.0,arrowlength=2.08,arrowinset=0.47]{->}(4.1509376,-1.0132812){0.47}{25.016893}{159.77515}
\psarc[linewidth=0.02,linecolor=color2341,arrowsize=0.0512cm
3.0,arrowlength=2.08,arrowinset=0.47]{->}(3.1709375,-0.99328125){0.47}{25.016893}{159.77515}
\psarc[linewidth=0.02,linecolor=color2341,arrowsize=0.0512cm
3.0,arrowlength=2.08,arrowinset=0.47]{->}(2.1509376,-1.0132812){0.47}{25.016893}{159.77515}
\psarc[linewidth=0.02,linecolor=color2341,arrowsize=0.0512cm
3.0,arrowlength=2.08,arrowinset=0.47]{->}(1.1909375,-1.0132812){0.47}{25.016893}{159.77515}
\psarc[linewidth=0.02,linecolor=color2341,arrowsize=0.0712cm
3.0,arrowlength=2.08,arrowinset=0.47]{->}(2.6709375,0.70671874){2.63}{-144.46233}{-36.060436}
\psline[linewidth=0.04cm](0.1809375,-0.80328125)(11.180938,-0.86328125)
\psline[linewidth=0.04cm,linecolor=color2382](10.180938,-0.40328124)(11.180938,-0.40328124)
\psline[linewidth=0.04cm,linecolor=color2382](9.160937,0.19671875)(10.160937,0.19671875)
\psline[linewidth=0.04cm,linecolor=color2382](8.180938,0.65671873)(9.180938,0.65671873)
\psline[linewidth=0.04cm,linecolor=color2382](7.1609373,1.1767187)(8.160937,1.1767187)
\psline[linewidth=0.04cm,linecolor=color2382](5.1409373,1.6367188)(7.1809373,1.6367188)
\psline[linewidth=0.04cm,linecolor=color2382](4.1809373,1.1567187)(5.1809373,1.1567187)
\psline[linewidth=0.04cm,linecolor=color2382](3.1809375,0.63671875)(4.1809373,0.63671875)
\psline[linewidth=0.04cm,linecolor=color2382](2.1809375,0.13671875)(3.1809375,0.13671875)
\psline[linewidth=0.04cm,linecolor=color2382](1.1809375,-0.31325272)(2.1809375,-0.31325272)
\psline[linewidth=0.04cm,linecolor=color2382](0.1809375,-0.8232242)(1.1809375,-0.8232242)
\psline[linewidth=0.04cm,linecolor=color2392](5.1809373,-0.84328127)(6.1809373,-0.84328127)
\psdots[dotsize=0.198,dotstyle=|](0.1809375,-0.80328125)
\psdots[dotsize=0.198,dotstyle=|](11.180938,-0.84328127)
\psdots[dotsize=0.198,dotstyle=|](1.1609375,-0.8232812)
\psdots[dotsize=0.198,dotstyle=|](2.1409376,-0.8232812)
\psdots[dotsize=0.198,dotstyle=|](3.1609375,-0.86328125)
\psdots[dotsize=0.198,dotstyle=|](4.1609373,-0.86328125)
\psdots[dotsize=0.198,dotstyle=|](7.1609373,-0.8232812)
\psdots[dotsize=0.198,dotstyle=|](8.160937,-0.84328127)
\psdots[dotsize=0.198,dotstyle=|](9.180938,-0.86328125)
\psdots[dotsize=0.198,dotstyle=|](10.160937,-0.86328125)
\psline[linewidth=0.02cm,tbarsize=0.07055555cm
5.0,bracketlength=0.15]{[-}(0.1609375,-2.3032813)(1.9409375,-2.3232813)
\psline[linewidth=0.02cm,tbarsize=0.07055555cm
5.0,rbracketlength=0.15]{[-)}(5.1809373,-1.8232813)(6.1809373,-1.8432813)
\psdots[dotsize=0.198,dotstyle=|](5.1609373,-0.86328125)
\psdots[dotsize=0.198,dotstyle=|](6.1609373,-0.80328125)
\usefont{T1}{ptm}{m}{n}
\rput(2.5623438,-2.3132813){$L^1$}
\usefont{T1}{ptm}{m}{n}
\rput(8.632343,-2.3132813){$R^1$}
\usefont{T1}{ptm}{m}{n}
\rput(5.6584377,-1.5432812){\small $\IM^1$}
\usefont{T1}{ptm}{m}{n}
\rput(0.23234375,-1.2532812){$0$}
\usefont{T1}{ptm}{m}{n}
\rput(11.172344,-1.2732812){$1$}
\psline[linewidth=0.02cm,tbarsize=0.07055555cm
5.0,rbracketlength=0.15]{-)}(9.280937,-2.3232813)(11.160937,-2.3432813)
\psline[linewidth=0.02cm,tbarsize=0.07055555cm
5.0,bracketlength=0.15]{[-}(6.1809373,-2.3432813)(7.9409375,-2.3232813)
\psline[linewidth=0.02cm,tbarsize=0.07055555cm
5.0,rbracketlength=0.15]{-)}(3.2809374,-2.3432813)(5.1609373,-2.3432813)
\psline[linewidth=0.02cm,tbarsize=0.07055555cm
5.0,rbracketlength=0.15]{[-)}(1.2009375,-2.8032813)(2.2009375,-2.8232813)
\psline[linewidth=0.02cm,tbarsize=0.07055555cm
5.0,rbracketlength=0.15]{[-)}(10.160937,-2.7632813)(11.160937,-2.7832813)
\psline[linewidth=0.02cm,tbarsize=0.07055555cm
5.0,rbracketlength=0.15]{[-)}(4.2009373,-2.7832813)(5.2009373,-2.8032813)
\psline[linewidth=0.02cm,tbarsize=0.07055555cm
5.0,rbracketlength=0.15]{[-)}(0.1609375,-2.7832813)(1.1609375,-2.8032813)
\usefont{T1}{ptm}{m}{n}
\rput(0.6684375,-3.1232812){\small $I_1$}
\usefont{T1}{ptm}{m}{n}
\rput(1.6484375,-3.1632812){\small $I_2$}
\usefont{T1}{ptm}{m}{n}
\rput(4.6684375,-3.1032813){\small $I_{\frac{M_1-1}{2}}$}
\usefont{T1}{ptm}{m}{n}
\rput(10.768437,-3.0832813){\small $I_{M_1}$}
\end{pspicture}
}
    \vskip 0.5cm
\textit{Fig.\,1}.\quad Representations of  $\varphi^1$ and
$\tau^1.$
\end{center}

\noindent The step function is $\varphi^1$ and the arrows indicate
the action of $T_{\alpha_1}^{(\tau_1)}.$ This figure corresponds
to the value $M_1=11.$
\par\bigskip

More precisely, we set
\begin{align}\label{cal}
\tau_1(x)=
\begin{cases}
\tfrac{M_1-3}2,& x \in I_{1},\\
-1,& x \in I_{k_1}, k_1\in\{2,\ldots, (M_1-1)/2\},\\
0, & x\in I_{(M_1+1)/2},\\
1,& x \in I_{k_1}, k_1\in\{ (M_1+3)/2,\dots , M_1\},\\
-\tfrac{M_1-3}2, &x \in I_{M_1}.
\end{cases}
\end{align}
Then $T_{\alpha_1}^{(\tau_1)}$ induces a permutation of the intervals $(I_{k_1})^{M_1}_{k_1=1}$ and a short calculation shows that
\begin{align}\label{shortcal}
\varphi^1(x)+\psi^1(T_{\alpha_1}^{(\tau_1)}(x))=
\begin{cases}
2, & x \in I_{k_1}, k_1\in \{2,\dots ,(M_1-1)/2,\\
    &\phantom{x \in I_{k_1}, k_1\in} (M_1+3)/2,\dots ,M_1-1\},\\
-\tfrac{M_1-5}2,& x \in I_{k_1}, k_1=1, M_1, \\
1,& x \in I_{(M_1+1)/2}.\\
\end{cases}
\end{align}
Next figure is a representation of this ``quasi-cost'' at level
$n=1,$ with the same value $M_1=11$ as in  Figure 1.
\begin{center}
\par\vskip 1cm
\scalebox{1} 
{
\begin{pspicture}(0,-2.18)(13.535,2.18)
\definecolor{color2873}{rgb}{0.0,0.4,1.0}
\definecolor{color2874}{rgb}{1.0,0.0,0.2}
\psline[linewidth=0.02cm,linestyle=dotted,dotsep=0.16cm](2.1021874,1.29)(8.422188,1.25)
\psline[linewidth=0.02cm](2.4221876,0.29)(13.422188,0.23)
\psdots[dotsize=0.198,dotstyle=|](2.4221876,0.29)
\psdots[dotsize=0.198,dotstyle=|](13.422188,0.25)
\psline[linewidth=0.04cm](7.3821874,0.73)(8.402187,0.73)
\psline[linewidth=0.04cm,linecolor=color2873](3.3421874,1.27)(7.4021873,1.27)
\psline[linewidth=0.04cm,linecolor=color2873](8.442187,1.25)(12.422188,1.25)
\psline[linewidth=0.04cm,linecolor=color2874](2.4221876,-1.71)(3.3821876,-1.71)
\psline[linewidth=0.04cm,linecolor=color2874](12.422188,-1.75)(13.402187,-1.75)
\psline[linewidth=0.02cm,linestyle=dotted,dotsep=0.16cm](2.4221876,0.27)(2.4221876,-1.69)
\psline[linewidth=0.02cm,linestyle=dotted,dotsep=0.16cm](3.3621874,1.25)(3.3621874,-1.73)
\psline[linewidth=0.02cm,linestyle=dotted,dotsep=0.16cm](7.3821874,1.25)(7.3821874,0.23)
\psline[linewidth=0.02cm,linestyle=dotted,dotsep=0.16cm](8.402187,1.21)(8.402187,0.23)
\psline[linewidth=0.02cm,linestyle=dotted,dotsep=0.16cm](12.422188,1.25)(12.402187,-1.71)
\psline[linewidth=0.02cm,linestyle=dotted,dotsep=0.16cm](13.402187,0.23)(13.402187,-1.71)
\psline[linewidth=0.02cm,arrowsize=0.093cm
2.0,arrowlength=1.4,arrowinset=0.4]{->}(2.0821874,-2.17)(2.1021874,2.17)
\psdots[dotsize=0.102,dotstyle=+](2.0821874,0.29)
\psdots[dotsize=0.102,dotstyle=+](2.0821874,1.29)
\psdots[dotsize=0.102,dotstyle=+](2.0821874,0.81)
\psdots[dotsize=0.102,dotstyle=+](2.0821874,-1.71)
\usefont{T1}{ptm}{m}{n}
\rput(1.7696875,0.25){\small $0$}
\usefont{T1}{ptm}{m}{n}
\rput(1.7896875,0.81){\small $1$}
\usefont{T1}{ptm}{m}{n}
\rput(1.7696875,1.25){\small $2$}
\usefont{T1}{ptm}{m}{n}
\rput(1.4096875,-1.67){\small $-\frac{M_1-5}{2}$}
\usefont{T1}{ptm}{m}{n}
\rput(2.5696876,-0.05){\small $0$}
\usefont{T1}{ptm}{m}{n}
\rput(13.229688,-0.17){\small $1$}
\usefont{T1}{ptm}{m}{n}
\rput(7.8996873,-0.13){\small $\IM^1$}
\psline[linewidth=0.02cm,linestyle=dotted,dotsep=0.16cm](3.4021876,-1.69)(12.362187,-1.73)
\psline[linewidth=0.02cm,linestyle=dotted,dotsep=0.16cm](2.1021874,0.79)(7.4021873,0.75)
\psline[linewidth=0.02cm,linestyle=dotted,dotsep=0.16cm](2.1021874,-1.69)(2.4421875,-1.69)
\psdots[dotsize=0.2,dotstyle=|](7.3821874,0.27)
\psdots[dotsize=0.2,dotstyle=|](8.402187,0.25)
\end{pspicture}
}
    \vskip 0.5cm
\textit{Fig.\,2}.\quad Representation of $\varphi^1+\psi^1\circ
T_{\alpha_1}^{(\tau_1)}.$
\end{center}
\par\bigskip

\noindent\textit{Assessment of Step $n=1.$}\ Let us resume what we
have achieved in the first induction step. For later use we
formulate things only in terms of $\phi^1 (\cdot)$ rather than
$\psi^1(\cdot)= 1-\phi^1(\cdot).$
\\
For the set $J^g_1=\{2,\dots ,\frac{M-1}{2}\} \cup \{\frac{M+3}{2},\dots ,M_1-1\}$ of
{\it ``good\footnote{We use the term ``good'' rather than ``regular'' as the abbreviation $r$ is already taken by the word ``right''.}}
indices''  we have
\begin{align}\label{Seite14}
\phi^1(x)-\phi^1 (T_{\alpha_1}^{(\tau_1)} (x)) =1, \qquad x\in I_{k_1}, k_1\in J_1^g,
\end{align}
while for the set $J_1^s=\{1,M_1\}$ of {\it ``singular indices''}  we have
\begin{align}\label{Seite14a}
\phi^1 (x)-\phi^1(T_{\alpha_1}^{(\tau_1)}(x))=-\frac{M_1-3}{2}, \qquad x\in I_{k_1}, \ k_1\in J^s,
\end{align}
so that
\begin{align*}
 \sum\limits_{k_1\in J_1^s}\ \int\limits_{I_{k_1}} [\phi^1(x)-\phi^1(T_{\alpha_1}^{(\tau_1)}(x))]\, dx = -\frac{M_1-3}{2} \ \frac{2}{M_1}
=-1+\frac{3}{M_1}.
\end{align*}
For the middle interval $I^1_\text{middle} =I_{(M_1+1)/2}$ we have $\phi^1(x)-\phi^1 (T^{\tau_1}_{\alpha_1}(x))=0.$
\\
We also note for later use that, for $x\in[0,1)$, the orbit $(T^i_{\alpha_1}(x))^{\tau_1(x)}_{i=1}$ never visits $\IM^1.$ Here we mean that $i$ runs
through $\{\tau_1(x), \tau_1(x)+1,\ldots ,-1\}$ when $\tau_1(x)<0$ and runs through the empty set when $\tau_1(x)=0.$

\medskip\noindent{\bf Step n=2:}
We now pass from $\alpha_1=\frac1{M_1}$ to
$\alpha_2=\frac1{M_1}+\frac1{M_2}$, where $M_2=M_1m_2 =m_1 m_2$
and where $m_2$, to be specified below, satisfies the relations of
Lemma \ref{RelPrimeLemma} and is large compared to $M_1$. For
$1\le k_1 \le M_1$ and $1\le k_2\le m_2$ we denote by
$I_{k_1,k_2}$ the interval
$$
I_{k_1,k_2} =\left[ \tfrac{k_1-1}{M_1} +\tfrac{k_2-1}{M_2} , \tfrac{k_1-1}{M_1} +\tfrac{k_2}{M_2}\right).
$$
Similarly as above we will also use the  notations  $L^2=[0, \frac12 - \frac1{2M_2}),
 R^2=[\frac12 + \frac1{2M_2}, 1),$ and $I^2_{\text{middle}}=I_{(M_1+1)/2,(m_2+1)/2}=[\frac12 -\frac1{2M_2}, \tfrac12 + \tfrac1{2M_2})$.
\\
We now define functions $\varphi^2, \psi^2$ such that
$\varphi^2(x)+\psi^2(x)\equiv 1$ and
$$
\varphi^2(x)+\psi^2(T_{\alpha_2}(x))=
\begin{cases}
0,& x\in L^2,\\
1,& x\in I^2_{\text{middle}},\\
2,& x\in R^2.
\end{cases}
$$
This is achieved if we set, e.g., $\varphi^2\equiv 0$ on $I_{1,1}$, and
\begin{align}\label{K1a}
 \varphi^2(T_{\alpha_2}(x))&=\varphi^2(x)+
\begin{cases}
1& x \in L^2,\\
0& x \in \IM^2, \\
-1& x \in R^2,\\
\end{cases}
\\
\psi^2(x)&=1-\varphi^2(x).\nonumber
\end{align}
Yet another way to express this is to say that for $j\in
\{0,\ldots,M_2-1\}$ we have
\begin{equation}\label{21}
\begin{split}
\varphi^2(T_{\alpha_2}^j(x))=
    \#&\{i\in\{0,\ldots,j-1\}: T_{\alpha_2}^i(x) \in L^2 \}\\
    &-\#\{i\in \{0,\ldots,j-1\}: T_{\alpha_2}^i(x) \in R^2 \}
\end{split}
,\quad x\in I_{1,1},
\end{equation}
in analogy to \eqref{BB1}.

While the function $\phi^1(x)$ in the first induction step was
increasing from $I_1$ to $I_{(M_1+1)/2}$ and then decreasing from
$I_{(M_1+3)/2}$ to $I_{M_1}$, the function $\phi^2(x)$ displays a
similar feature on each of the intervals $I_{k_1}$: roughly
speaking, i.e.\ up to terms controlled by $M_1$, it increases on
the left half of each such interval and then decreases again on
the right half. The next lemma makes this fact precise. We keep in
mind, of course, that $m_2$ will be much bigger than $M_1$.

\begin{lemma}[Oscillations of $\phi^2$]\label{L2.2}
The function $\phi^2$ defined in \eqref{K1a} has the following properties.\\
\begin{enumerate}
    \item $|\phi^2(x)-\phi^2(x\oplus\tfrac1{M_2})|\le \ 4M^2_1, \quad x\in[0,1).$
    \item For each $1\le k'_1, k^{''}_1 \le M_1$ we have
$${\phi^2}_{|I_{k'_1, (m_2+1)/2}} -{\phi^2}_{|I_{k''_1,1}} \geq \tfrac{m_2}{2M_1}-10M^3_1.$$
\end{enumerate}
\end{lemma}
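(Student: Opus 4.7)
For part~(1), the plan is to observe that $D(y):=\phi^2(y\oplus 1/M_2)-\phi^2(y)$ is itself a $T_{\alpha_2}$-cocycle: subtracting the defining relations $\phi^2(T_{\alpha_2}z)-\phi^2(z)=g(z)$ (with $g:=\I_{L^2}-\I_{R^2}$) at $z=y$ and $z=y\oplus 1/M_2$ yields
\[
D(T_{\alpha_2}y)-D(y)=\tilde g(y),\qquad \tilde g(y):=g(y\oplus 1/M_2)-g(y).
\]
Tracing the three jumps of $g$ directly, $\tilde g$ is supported on exactly three of the $M_2$ sub-intervals---namely $I_{(M_1+1)/2,(m_2-1)/2}$, $\IM^2=I_{(M_1+1)/2,(m_2+1)/2}$ and $I_{M_1,m_2}$---where it takes the values $-1,-1,+2$. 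Since a $T_{\alpha_2}$-orbit visits each sub-interval at most once before closing up, every partial Birkhoff sum $\sum_{j<i}\tilde g(T_{\alpha_2}^{\,j}y)$ lies in $[-2,+2]$; hence $D$ oscillates by at most $4$ along each orbit, and since the orbit spans every sub-interval this gives $\max D-\min D\le 4$. Combined with the telescoping identity $\int D\,dy=0$, one concludes $|D|\le 4$, comfortably within the stated $4M_1^2$.

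For part~(2), the plan is to analyze the buildup of $\phi^2$ along the orbit starting from $x_0\in I_{k_1'',1}$ by grouping iterations into \emph{sweeps} of $M_1$ consecutive $T_{\alpha_2}$-steps. Using $T_{\alpha_2}^{M_1}=T_{1/m_2}$, successive sweeps are shifted by $1/m_2$ on the circle, and within a sweep the $M_1$ orbit points visit the $M_1$ big intervals essentially one per big interval (with rare ``jumps'' when the sub-interval index wraps modulo $m_2$ that skip one big interval and duplicate a neighbour). A direct count shows that the $g$-sum over a single sweep equals $g$ evaluated at the unique orbit point of the sweep lying in the middle big interval $I_{(M_1+1)/2}$: the $(M_1-1)/2$ contributions on each side cancel, and in the range of $q$ treated below no sweep-internal jump occurs at all. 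Starting from $I_{k_1'',1}$, the middle-big-interval visit of the $q$-th sweep falls in sub-interval $1+qM_1+r^\ast\pmod{m_2}$, where $r^\ast:=(M_1+1)/2-k_1''\bmod M_1$; for $q=0,1,\ldots,q^\ast-O(1)$ with $q^\ast:=(m_2-1)/(2M_1)$ this value is $\le (m_2-1)/2$, so each such sweep contributes $+1$. A short calculation exploiting $m_2\equiv 1\pmod{M_1}$ from Lemma~\ref{RelPrimeLemma} shows that the orbit point reached after $q^\ast M_1=(m_2-1)/2$ iterations is precisely the left endpoint of $I_{k_1'',(m_2+1)/2}$, yielding
\[
\phi^2\big|_{I_{k_1'',(m_2+1)/2}}-\phi^2\big|_{I_{k_1'',1}}\ \ge\ \tfrac{m_2-1}{2M_1}-O(1)\qquad \text{for every }k_1''.
\]

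The remaining step is to control the cross-big-interval variation of $\phi^2$ at the fixed sub-interval index $k_2=1$. The key observation is $T_{\alpha_2}^{m_2}=T_{2/M_1}$, which implies that the orbit visits the sub-intervals $I_{k_1,1}$ (as $k_1$ varies) in the order obtained by iterating the rotation $T_{2/M_1}$, and that the $\phi^2$-increment over each block of $m_2$ consecutive $T_{\alpha_2}$-steps is only $O(1)$ (by the same sweep count: within one such block the numbers of $+1$- and $-1$-sweeps differ by at most $O(1)$). Consequently $\phi^2|_{I_{k_1,1}}$ varies by at most $O(M_1)$ as $k_1$ ranges over $\{1,\ldots,M_1\}$, and assembling the two estimates gives
\[
\phi^2\big|_{I_{k_1',(m_2+1)/2}}-\phi^2\big|_{I_{k_1'',1}}\ \ge\ \tfrac{m_2-1}{2M_1}-O(M_1),
\]
which is comfortably $\ge m_2/(2M_1)-10M_1^3$. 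The main technical obstacle is this cross-big-interval step: naively iterating part~(1) over $m_2$ sub-interval shifts would give only the useless bound $O(m_2M_1^2)$, so one must instead exploit the $T_{2/M_1}$-structure of $T_{\alpha_2}^{m_2}$ together with the sweep-by-sweep cancellation that gives $O(1)$ increments per $m_2$-block.
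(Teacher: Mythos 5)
Your proof is correct but takes a genuinely different and in places sharper route than the paper's. For part~(i), the paper factors $T_{1/M_2}$ as $T_{\alpha_2}^{\frac{M_1-1}{2}(m_2-2)}\circ T_{\alpha_2}^{M_1}$ and bounds the $L^2$/$R^2$ discrepancy along the resulting orbit of length $\approx M_1 m_2/2$, via a separate claim about $T_{\alpha_2}^{m_2-2}$; the accumulation of errors produces the bound $4M_1^2$. Your cocycle observation sidesteps all of this: since $D(y)=\phi^2(y\oplus 1/M_2)-\phi^2(y)$ is itself a $T_{\alpha_2}$-cocycle driven by $\tilde g$, which vanishes outside three sub-intervals where it takes the values $-1,-1,+2$ summing to zero, and a $T_{\alpha_2}$-orbit visits each sub-interval exactly once, every partial Birkhoff sum of $\tilde g$ lies in $[-2,2]$; combined with $\int D=0$ this gives $|D|\le 4$, much tighter than $4M_1^2$ and with no detour through $T_{\alpha_2}^{m_2-2}$. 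For part~(ii), your within-$k_1$ estimate via sweeps of length $M_1$ is essentially the paper's bookkeeping in disguise (the paper's observation that the orbit point lies in $I^{\text{left}}_{k_1}$ plays the role of your condition that the middle-big-interval visit of the sweep falls in $L^2$). Where you diverge is the cross-$k_1$ correction: the paper applies $T_{\alpha_2}^{k_1''-k_1'}$, which perturbs the sub-interval index by at most $M_1-1$, and then iterates part~(i) that many times, with error $O(M_1^3)$; you instead iterate $T_{\alpha_2}^{m_2}=T_{2/M_1}$, control each $m_2$-block by a second sweep count, and land an $O(M_1)$ error. Both close the argument, yours more tightly at the price of a second sweep analysis. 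One small mischaracterization at the end: you describe the obstacle as ``naively iterating part~(1) over $m_2$ sub-interval shifts gives the useless $O(m_2M_1^2)$'', implying the $T_{2/M_1}$-structure is forced; but the paper's $T_{\alpha_2}^{k_1''-k_1'}$ device already replaces the $m_2$-fold iteration by a purely $O(M_1)$-fold one, so the $T_{2/M_1}$ route is an alternative, not a necessity.
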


\begin{proof} Let us begin with the proof of (i).

\noindent$\bullet$ \textit{ Proof of (i).}\ While
$T^{M_1}_{\alpha_1}= id$ holds true, we have that
$T^{M_1}_{\alpha_2}$ is only close to the identity map. In fact,
as $T_{\alpha_2} (x) =x\oplus \tfrac{m_2+1}{M_2},$ we have
\begin{align}\label{K2}
& T^{M_1}_{\alpha_2} (x) =x\oplus \tfrac{M_1}{M_2}.
\end{align}
Somewhat less obvious is the fact that $T^{m_2-2}_{\alpha_2}$ also
is close to the identity map. In fact
\begin{align}\label{K3}
T^{m_2-2}_{\alpha_2}(x)=x\ominus \tfrac2{M_2}.
\end{align}
Indeed, by \eqref{KongOne} applied to $i=1$, there is $c\in\N$
such that $m_2=cM_1+1$. Hence
\begin{align*}
T^{m_2-2}_{\alpha_2} (x) & = x\oplus (m_2-2)\frac{m_2+1}{M_2}
\\ &  = x \oplus (cM_1-1) \frac{m_2+1}{M_2}
\\ &  = x \oplus \frac{cM_2-m_2+(m_2-2)}{M_2} =x\ominus \frac2{M_2}.
\end{align*}
Here is one more remarkable feature of the map $T^{m_2-2}_{\alpha_2}$.

\bigskip
\noindent {\it Claim:} {\it For $x\in[0,1)$ the orbit $(T^i_\alpha
(x))^{m_2-2}_{i=1}$ visits the intervals $L^2=[0,\tfrac12
-\tfrac1{2M_2})$ and $R^2=[\tfrac12 +\tfrac1{2M_2} ,1)$
approximately equally often. More precisely, the difference of the
visits of these two intervals is bounded in absolute value by
$4M_1$.}

\medskip
Indeed, by Lemma \ref{RelPrimeLemma}, the orbit $(T^i_{\alpha_2}
(x))^{M_2}_{i=1}$ visits each of the intervals $I_{k_1,k_2}$
exactly one time so that it visits $L^2$ and $R^2$ equally often,
namely $\tfrac{M_2-1}{2}$ times. The $M_1$ many disjoint subsets
$\left(T_{\alpha_2}^{j(m_2-2)}\left(T^i_{\alpha_2}
(x)\right)^{m_2-2}_{i=1}\right)^{M_1}_{j=1}$ of this orbit are
obtained by shifting them successively by $2/M_2$ to the left
\eqref{K3}. As the difference
    $(T^i_{\alpha_2} (x))^{M_2}_{i=1}
\setminus \left(T_{\alpha_2}^{j(m_2-2)}\left(T^i_{\alpha_2}
(x)\right)^{m_2-2}_{i=1}\right)^{M_1}_{j=1}$ consists only of
$2M_1$ many points we have that the difference of the visits of
$\left(T_{\alpha_2}^{j(m_2-2)}\left(T^i_{\alpha_2}
(x)\right)^{m_2-2}_{i=1}\right)^{M_1}_{j=1}$ to $L^2$ and $R^2$ is
bounded by $4M_1$. This implies that the difference of the visits
of $(T^i_{\alpha_2} (x))^{m_2-2}_{i=1}$ to $L^2$ and $R^2$ can be
estimated by $4M_1$ too: indeed, if this orbit visits $4M_1+k$
many times $L^2$ more often then $R^2$ (or vice versa) for some
$k\geq 0$, then
$(T^{m_2-2}_{\alpha_2}(T^i_{\alpha_2}(x)))^{m_2-2}_{i=1}$ visits
$L^2$ at least $4M_1+k-4$ many times more often than $R^2$ etc.
and finally
$(T_{\alpha_2}^{M_1(m_2-2)}(T^i_{\alpha_2}(x)))^{m_2-2}_{i=1}$
visits $L^2$ at least $k$ many times more often than $R^2$ which
yields a contradiction. Hence we have proved the claim.

    To prove assertion (i) note that by \eqref{K2} and
\eqref{K3}
\begin{align}\label{p16}
T_{\alpha_2}^{\tfrac{M_1-1}{2}(m_2-2)} \circ T_{\alpha_2}^{M_1} (x) = x\oplus \tfrac1{M_2}
\end{align}
We deduce from the claim that the difference of the visits of the orbit $(T^i_{\alpha_2})^{\tfrac{M_1-1}{2}(m_2-2)+M_1}_{i=0}$ to $L^2$ and $R^2$ is bounded
in absolute value by $\tfrac{M_1-1}{2}(4M_1)+M_1$ which proves (i).

\medskip\noindent$\bullet$ \textit{ Proof of (ii).}\
As regards (ii) suppose first $k'_1=k''_1=:k_1$. Note that, for
$x\in I^{\text{left}}_{k_1}: = [\tfrac{k_1-1}{M_1},
\tfrac{k_1-1}{M_1}+ \tfrac1{2M_1}-\tfrac{2M_1+1}{2M_2})$, we have
that the orbit $(T^i_{\alpha_2}(x))^{M_1-1}_{x=0}$ visits $L^2$
one time more often than $R^2$, namely $\tfrac{M_1+1}{2}$ versus
$\tfrac{M_1-1}{2}$ times. If we start with $x\in I_{k_1,1}$ then,
for $1\le j< \tfrac{m_2}{2M_1} -1$ we have that
$T^{jM_1}_{\alpha_2}(x) \in I^{\text{left}}_{k_1}.$ Hence, for the
orbit
$(T^i_{\alpha_2})_{i=0}^{(\lfloor\tfrac{m_2}{2M_1}\rfloor-1)M_1}$,
the difference of the visits to the interval $L^2$ and $R^2$
equals $\lfloor\tfrac{m_2}{2M_1}\rfloor-1$, the integer part of
$\tfrac{m_2}{2M_1}-1$. Combining this estimate with the estimate
(i) as well as the fact that the distance between $x\oplus
\left(\lfloor\tfrac{m_2}{2M_1}\rfloor-1\right)\tfrac{M_1}{M_2}$
and $x\oplus\tfrac{m_2-1}{2M_2}$ is bounded by
$\tfrac{2M_1-1}{M_2}$, we obtain, for $x\in I_{k_1,1}$ and $y\in
I_{k_1,\tfrac{m_2+1}{2}},$ that
\begin{align*}
\phi^2(y)-\phi^2(x) & \geq
\phi^2(T_{\alpha_2}^{(\lfloor\tfrac{m_2}{2M_1}\rfloor-1)M_1}
(x))-\phi^2(x)
 -\Big|\phi^2(y)-\phi^2 (T_{\alpha_2}^{(\lfloor\tfrac{m_2}{2M_1}\rfloor-1)M_1} (x))\Big|
\\ &  \geq (\lfloor\tfrac{m_2}{2M_1}\rfloor-1)-  (2M_1-1) (4M^2_1)
\\ &  \geq \tfrac{m_2}{2M_1} -8M^3_1.
\end{align*}
Passing to the general case $1\le k'_1,k''_1\le M_1$ observe that $T_{\alpha_2}^{k''_1- k'_1}$ maps $I_{k'_1,\tfrac{m_2+1}{2}}$ to $I_{k''_1,
\tfrac{m_2+1}{2}+k''_1- {k'_1}}.$
Using again (i) we obtain estimate (ii).
\end{proof}

We now are ready to do the inductive construction for $n=2$. For
$m_2$ satisfying the conditions of Lemma 3.1 and to be specified
below, we shall define $\tau_2:[0,1)\to\{-\tfrac{M_2-1}{2},\dots
,0,\dots ,\tfrac{M_2-1}{2}\}$, where $M_2=m_2m_1$, such that the
map
\begin{equation*}
T^{(\tau_2)}_{\alpha_2}: \left\{
    \begin{array}{rcl}
      [0,1) & \to & [0,1) \\
      x & \mapsto & T^{(\tau_2)}_{\alpha_2}(x):= \ T^{\tau_2(x)}_{\alpha_2}(x) \\
    \end{array}
    \right.
 \end{equation*}
has the following properties.

\begin{enumerate}
\item
The measure-preserving bijection $T^{(\tau_2)}_{\alpha_2}:[0,1)\to [0,1)$ maps each interval $I_{k_1}$ onto $T^{(\tau_1)}_{\alpha_1} (I_{k_1}).$ It induces a permutation of the intervals
$I_{k_1,k_2},$ where $1\le k_1\le M_1, 1\le k_2\le m_2.$
\item
When $\tau_{2}(x) >0$, we have
\begin{align}\label{p24a}
T^i_{\alpha_{2}}(x) \notin I^{2}_{\text{middle}}, \qquad i=0,\ldots ,\tau_{2}(x),
\end{align}
and, when $\tau_{2}(x)<0$, we have
\begin{align}\label{p24b}
T^i_{\alpha_{2}}(x)\notin I^{2}_{\text{middle}}, \qquad i=\tau_{2}(x),\ldots ,0.
\end{align}
\item
On the {\it ``good''} intervals
$I_{k_1}$, where $k_1\in J_1^g=\{2,\dots ,\tfrac{M_1-1}{2}\} \cup \{\tfrac{M_1+3}{2}, \dots ,M_1-1\}$, for which we have, by \eqref{Seite14},
$$\phi^1 (x)-\phi^1 (T^{(\tau_1)}_{\alpha_1}(x))=1,$$ the function $\tau_2$ will satisfy the estimates
\begin{align}\label{p33}
\mu[I_{k_1} \cap \{\tau_{2}\neq\tau_{1}\}]\leq \tfrac{M_1}{m_2} \mu [I_{k_1}],
\end{align}
and
\begin{align}\label{K8a}
\sum\limits_{k_1\in J_1^g}\ \ \int_{I_{k_1}} |1-\phi^2 (x)+\phi^2
(T^{(\tau_2)}_{\alpha_2} (x))|\,dx < \frac{4M^2_1}{m_2}.
\end{align}
\item
On the {\it ``singular''} intervals $I_{k_1}$, where $k_1 \in J_1^s =\{1,M_1\}$, for which we have , by \eqref{Seite14a},
$$\phi^1 (x)-\phi^1 (T^{(\tau_1)}_{\alpha_1}(x))=-\frac{M_1-3}{2},$$
we split $\{1,\dots ,m_2\}$ into a set $J^{k_1,g}$ of {\it
``good''} indices, and a set $J^{k_1, s}$ of {\it ``singular''}
indices, such that
$$\phi^2(x)-\phi^2 (T^{(\tau_2)}_{\alpha_2}(x))=0,\quad \mbox{for} \ x\in I_{k_1,k_2}, k_2\in J^{k_1,g},$$
while
$$\phi^2(x)-\phi^2 (T^{(\tau_2)}_{\alpha_2}(x)) < -\tfrac{m_2}{2M_1} +20M^3_1 \quad \mbox{for} \ x\in I_{k_1,k_2}, k_2\in J^{k_1,s}$$
where $J^{k_1,s}$ consists of $M_1(M_1-3)$  many elements of
$\{1,\dots ,m_2\}.$
\\
Hence we have a total ``singular mass'' of
\begin{align}\label{p18}
\sum\limits_{k_1\in J_1^s}\ \sum\limits_{k_2 \in J^{k_1,s}}\
\int_{I_{k_1,k_2}} [\phi^2 (x) -\phi^2
(T^{(\tau_2)}_{\alpha_2}(x))]\, dx < -1+\tfrac{3}{M_1} +
\tfrac{c(M_1)}{m_2},
\end{align}
where $c (M_1)$ is a constant depending only on $M_1$.
\item
On the middle interval $I^1_\text{middle} =I_{\frac{M_1+1}{2}}$ we simply let $\tau_2 =\tau_1 =0$.
\end{enumerate}

Let us illustrate graphically an interesting property of this
construction, namely the shape of the quasi-cost function
$\varphi^2+\psi^2\circ T_{\alpha_2}^{(\tau_2)}$.

\medskip
\begin{center}
\scalebox{1} 
{
\begin{pspicture}(0,-3.36)(13.575,3.36)
\definecolor{color852b}{rgb}{0.2,0.8,1.0}
\definecolor{color891b}{rgb}{1.0,0.0,0.4}
\psline[linewidth=0.02cm,linestyle=dotted,dotsep=0.16cm](1.8021874,2.47)(8.122188,2.43)
\psline[linewidth=0.02cm](2.0421875,1.47)(13.082188,1.41)
\psdots[dotsize=0.198,dotstyle=|](2.0821874,1.47)
\psdots[dotsize=0.198,dotstyle=|](13.082188,1.43)
\psline[linewidth=0.04cm](7.0621877,1.97)(8.082188,1.97)
\psline[linewidth=0.02cm,linestyle=dotted,dotsep=0.16cm](7.0821877,2.43)(7.0821877,1.41)
\psline[linewidth=0.02cm,linestyle=dotted,dotsep=0.16cm](8.102187,2.39)(8.102187,1.41)
\psline[linewidth=0.02cm,arrowsize=0.093cm
2.0,arrowlength=1.4,arrowinset=0.4]{->}(1.7621875,-0.47)(1.8021874,3.35)
\psdots[dotsize=0.102,dotstyle=+](1.7821875,1.47)
\psdots[dotsize=0.102,dotstyle=+](1.7821875,2.47)
\psdots[dotsize=0.102,dotstyle=+](1.7821875,1.99)
\usefont{T1}{ptm}{m}{n}
\rput(1.4696875,1.43){\small $0$}
\usefont{T1}{ptm}{m}{n}
\rput(1.4896874,1.99){\small $1$}
\usefont{T1}{ptm}{m}{n}
\rput(1.4696875,2.43){\small $2$}
\usefont{T1}{ptm}{m}{n}
\rput(1.9296875,1.17){\small $0$}
\usefont{T1}{ptm}{m}{n}
\rput(13.269688,1.19){\small $1$}
\usefont{T1}{ptm}{m}{n}
\rput(7.5996876,1.05){\small $\IM^1$}
\psdots[dotsize=0.2,dotstyle=|](7.0821877,1.45)
\psdots[dotsize=0.2,dotstyle=|](8.102187,1.43)
\psline[linewidth=0.02cm,linestyle=dashed,dash=0.16cm
0.16cm](1.7421875,-0.69)(1.7421875,-1.33)
\psline[linewidth=0.02cm](1.7621875,-1.47)(1.7621875,-3.35)
\psline[linewidth=0.04cm](2.3621874,1.97)(2.8621874,1.97)
\psline[linewidth=0.02cm,linestyle=dotted,dotsep=0.16cm](2.3421874,1.95)(2.3421874,-2.53)
\psline[linewidth=0.02cm,linestyle=dotted,dotsep=0.16cm](2.8421874,1.97)(2.8821876,-2.51)
\psline[linewidth=0.02cm,linestyle=dotted,dotsep=0.16cm](3.0421875,2.45)(3.0821874,-2.51)
\psline[linewidth=0.02cm,linestyle=dotted,dotsep=0.16cm](2.0621874,1.45)(2.0621874,-2.49)
\psline[linewidth=0.02cm,linestyle=dotted,dotsep=0.16cm](12.2421875,1.93)(12.2421875,-2.55)
\psline[linewidth=0.02cm,linestyle=dotted,dotsep=0.16cm](12.722187,1.93)(12.762188,-2.55)
\psline[linewidth=0.02cm,linestyle=dotted,dotsep=0.16cm](13.062187,1.43)(13.082188,-2.53)
\psline[linewidth=0.02cm,linestyle=dotted,dotsep=0.16cm](12.062187,2.45)(12.062187,-2.51)
\psline[linewidth=0.04cm](12.2421875,1.95)(12.7421875,1.95)
\psline[linewidth=0.02cm,linestyle=dotted,dotsep=0.16cm](1.8421875,1.99)(12.322187,1.95)
\psdots[dotsize=0.12,dotstyle=+](1.7421875,-2.55)
\usefont{T1}{ptm}{m}{n}
\rput(1.0496875,-2.53){\small $\propto -\frac{M_2}{M_1^2}$}
\psframe[linewidth=0.02,dimen=outer,fillstyle=solid,fillcolor=color852b](7.0621877,2.55)(3.0421875,2.33)
\psframe[linewidth=0.02,dimen=outer,fillstyle=solid,fillcolor=color852b](12.102187,2.53)(8.062187,2.33)
\psframe[linewidth=0.02,dimen=outer,fillstyle=solid,fillcolor=color891b](13.082188,-2.29)(12.782187,-2.89)
\psframe[linewidth=0.02,dimen=outer,fillstyle=solid,fillcolor=color891b](12.282187,-2.29)(12.062187,-2.89)
\psframe[linewidth=0.02,dimen=outer,fillstyle=solid,fillcolor=color891b](2.3621874,-2.31)(2.0621874,-2.91)
\psframe[linewidth=0.02,dimen=outer,fillstyle=solid,fillcolor=color891b](3.1221876,-2.31)(2.8821876,-2.93)
\psdots[dotsize=0.12,dotstyle=+](1.7421875,-2.55)
\psline[linewidth=0.02cm,fillcolor=color891b,linestyle=dotted,dotsep=0.16cm](3.1021874,-2.55)(12.042188,-2.55)
\psdots[dotsize=0.2,dotstyle=|](3.0821874,1.47)
\psdots[dotsize=0.2,dotstyle=|](12.062187,1.41)
\usefont{T1}{ptm}{m}{n}
\rput(3.2996874,1.17){\small $\frac{1}{M_1}$}
\usefont{T1}{ptm}{m}{n}
\rput(11.529688,1.13){\small $\frac{M_1-1}{M_1}$}
\end{pspicture}
}
  \vskip 0.5cm
\textit{Fig.\,3}.\quad Shape of the quasi-cost
$\varphi^2+\psi^2\circ T_{\alpha_2}^{(\tau_2)}.$
\end{center}
\textit{The strips in this graphic representation symbolize the
oscillations of the function $\varphi^2+\psi^2\circ
T_{\alpha_2}^{(\tau_2)}$. On the``singular'' set, it achieves
values of order $-M_2/M_1^2.$}
\bigskip

It will sometimes be more convenient to specify to which interval
$I_{l_1,l_2}$ the interval $I_{k_1,k_2}$ is mapped under
$T^{(\tau_2)}_{\alpha_2}$, instead of spelling out the value of
$\tau_2$ on the interval $I_{k_1,k_2}$. Note that by Lemma
\ref{RelPrimeLemma}, for each map associating to $(k_1,k_2)$ a
pair $(l_1,l_2)$, there corresponds precisely one value
$\tau_2|_{I_{k_1,k_2}} : I_{k_1,k_2}\to \{-M_2+1,\dots ,0,\dots
,M_2-1\}$ such that \eqref{p24a} (resp.\ \eqref{p24b}) is
satisfied and $T^{(\tau_2)}_{\alpha_2} (I_{k_1,k_2})=I_{l_1,l_2}$.

\medskip

Let us start with a ``good'' interval $I_{k_1}$, with $k_1 \in
J_1^g$ as in (iii) above, say $k_1\in\{2,\dots
,\tfrac{M_1-1}{2}\}$, for which we have $\tau_1(x)=-1$. Then the
intervals $I_{k_1,2},\dots, I_{k_1,m_2}$ are mapped under
$T^{\tau_1(x)}_{\alpha_2}(x) = T^{-1}_{\alpha_2}(x)$ onto the
intervals $I_{k_1-1,1},\dots ,I_{k_1-1,m_2-1}$. Defining
$\tau_2(x)=\tau_1(x)$ on these intervals we get for $x\in
I_{k_1,k_2}$, where $2\le k_1\le \frac{M-1}{2}, 2\le k_2 \le m_2,$

\begin{align}\label{K11}
1=\phi^1(x)-\phi^1(T^{(\tau_1)}_{\alpha_1}(x))=\phi^2(x)-\phi^2 (T^{(\tau_2)}_{\alpha_2}(x)).
\end{align}

We still have to define the value of $\tau_2(x),$ for $x\in I_{k_1,1}$. The map $T^{(\tau_2)}_{\alpha_2}$ has to map $I_{k_1,1}$ to
the remaining gap $I_{k_1-1,m_2}$, which happens
to be its left neighbour. We do not explicitly calculate the unique number $\tau_2|_{I_{k_1,1}}\in\{-M_2+1,\dots ,M_2-1\}$, satisfying \eqref{p24a}
(resp.~\eqref{p24b}), which does the job, but only use the conclusion of Lemma \ref{L2.2} to find that,
for $x\in I_{k_1,1}$ such that
$T^{(\tau_2)}_{\alpha_2}(x) \in I_{k_1-1,m_2}$,

\begin{align}\label{Nachtrag}
 |1-[\phi^2(x)-\phi^2 (T^{(\tau_2)}_{\alpha_2}(x))]|\le 4M^2_1+1.
 \end{align}
This takes care of the ``good'' intervals $I_{k_1}$, where
$k_1\in\{2,\dots ,\tfrac{M_1-1}{2}\}.$

\vskip 2cm
\begin{center}
\scalebox{1} 
{
\begin{pspicture}(0,0.05171875)(8.144062,2.2348437)
\definecolor{color4894}{rgb}{0.0,0.4,1.0}
\definecolor{color4923}{rgb}{1.0,0.0,0.2}
\psarc[linewidth=0.03,linecolor=color4894,arrowsize=0.073cm
2.51,arrowlength=1.4,arrowinset=0.4]{->}(4.62,-0.09484375){2.1}{42.27369}{139.93921}
\psarc[linewidth=0.03,linecolor=color4894,arrowsize=0.073cm
2.51,arrowlength=1.4,arrowinset=0.4]{->}(4.2,-0.09484375){2.1}{42.27369}{139.93921}
\psarc[linewidth=0.03,linecolor=color4894,arrowsize=0.073cm
2.5,arrowlength=1.4,arrowinset=0.4]{->}(2.62,-0.09484375){2.1}{42.27369}{139.93921}
\psline[linewidth=0.04cm](0.0,1.3051562)(7.78,1.2651563)
\psline[linewidth=0.03cm](0.76,1.5851562)(0.76,0.96515626)
\psdots[dotsize=0.2,dotstyle=|](0.38,1.3051562)
\psdots[dotsize=0.2,dotstyle=|](1.18,1.2851562)
\psdots[dotsize=0.2,dotstyle=|](1.6,1.3051562)
\psdots[dotsize=0.2,dotstyle=|](2.0,1.3051562)
\psdots[dotsize=0.2,dotstyle=|](2.4,1.3051562)
\psdots[dotsize=0.2,dotstyle=|](2.8,1.2851562)
\psdots[dotsize=0.2,dotstyle=|](3.18,1.3051562)
\psdots[dotsize=0.2,dotstyle=|](3.96,1.2851562)
\psdots[dotsize=0.2,dotstyle=|](4.4,1.2851562)
\psdots[dotsize=0.2,dotstyle=|](4.8,1.2851562)
\psdots[dotsize=0.2,dotstyle=|](5.2,1.2851562)
\psdots[dotsize=0.2,dotstyle=|](5.58,1.3051562)
\psdots[dotsize=0.2,dotstyle=|](5.98,1.2851562)
\psdots[dotsize=0.2,dotstyle=|](6.78,1.3051562)
\psline[linewidth=0.03cm](6.38,1.6051563)(6.38,1.0051563)
\psline[linewidth=0.04cm,linecolor=color4923](3.2,1.2851562)(3.98,1.2851562)
\psline[linewidth=0.03cm](3.56,1.6051563)(3.56,1.0051563)
\psbezier[linewidth=0.03,linecolor=color4923,arrowsize=0.093cm
2.0,arrowlength=1.4,arrowinset=0.4]{->}(3.72,1.2788147)(3.72,0.5484861)(3.4,0.52515626)(3.36,1.2416221)
\usefont{T1}{ptm}{m}{n}
\rput(2.1275,0.28515625){\small $I_{(k_1-1)}$}
\usefont{T1}{ptm}{m}{n}
\rput(5.0175,0.26515624){\small $I_{k_1}$}
\psdots[dotsize=0.2,dotstyle=|](7.18,1.3051562)
\psline[linewidth=0.02cm,arrowsize=0.05291667cm
2.0,arrowlength=1.4,arrowinset=0.4]{<->}(6.78,1.6851562)(7.16,1.6851562)
\usefont{T1}{ptm}{m}{n}
\rput(6.9935937,2.0601563){\footnotesize $\frac{1}{M_2}$}
\psline[linewidth=0.02cm,tbarsize=0.07055555cm
5.0,bracketlength=0.15]{[-}(0.78,0.32515624)(1.06,0.32515624)
\psline[linewidth=0.02cm,tbarsize=0.07055555cm
5.0,rbracketlength=0.15]{-)}(5.98,0.30515626)(6.36,0.30515626)
\psline[linewidth=0.02cm,tbarsize=0.07055555cm
5.0,rbracketlength=0.15]{-)}(3.18,0.28515625)(3.56,0.28515625)
\psdots[dotsize=0.12,dotstyle=|](6.78,1.7051562)
\psdots[dotsize=0.12,dotstyle=|](7.18,1.7051562)
\psline[linewidth=0.02cm,tbarsize=0.07055555cm
5.0,bracketlength=0.15]{[-}(3.58,0.28515625)(3.86,0.28515625)
\end{pspicture}
}
\\
 \textit{Fig.\,4-a}.\quad $k_1\in J_1^g$ on the left
side.\footnote{Figure 3 is built with the small value $m_2=7$ for
the sake of clarity of the drawing. But this value is not feasible
since with the lowest $m_1=5,$ \eqref{KongOne} implies that $m_2$
is at least equal to 11; other requirements of the construction
imply that it has to be even larger.}
\end{center}

 For the ``good'' intervals
$I_{k_1}$, where $k_1\in \{\tfrac{M_1+3}{2}, \dots ,M_1-1\}$ we
have $\tau_1(x)=1$ so that $T^{(\tau_1)}_{\alpha_2}$ maps the
intervals $I_{k_1,1},\dots ,I_{k_1,m_2-1}$ to $I_{k_1+1,2}, \dots
,I_{k_1+1,m_2}$. Again we define $\tau_2(x)=\tau_1(x)=1,$ for $x$
in these intervals so that we obtain the identity \eqref{K11}, for
$\tfrac{M_1+3}{2} \le k_1\le M_1-1$ and $1\le k_2\le m_2-1.$
Finally, $T^{(\tau_2)}_{\alpha_2}$ has to map $I_{k_1,m_2}$ to the
interval $I_{k_1+1,1}$ so that again we derive an estimate as in
\eqref{Nachtrag}.

\begin{center}
\scalebox{1} 
{
\begin{pspicture}(0,-0.41546875)(7.8,2.7020311)
\definecolor{color5290}{rgb}{0.0,0.4,1.0}
\definecolor{color5319}{rgb}{1.0,0.0,0.2}
\psarc[linewidth=0.03,linecolor=color5290,arrowsize=0.073cm
2.0,arrowlength=1.4,arrowinset=0.4]{<-}(4.62,-0.56203127){2.1}{42.27369}{139.93921}
\psarc[linewidth=0.03,linecolor=color5290,arrowsize=0.073cm
2.0,arrowlength=1.4,arrowinset=0.4]{<-}(4.2,-0.56203127){2.1}{42.27369}{139.93921}
\psarc[linewidth=0.03,linecolor=color5290,arrowsize=0.073cm
2.0,arrowlength=1.4,arrowinset=0.4]{<-}(2.62,-0.56203127){2.1}{42.27369}{139.93921}
\psline[linewidth=0.04cm](0.0,0.83796877)(7.78,0.79796875)
\psline[linewidth=0.03cm](0.76,1.1179688)(0.76,0.49796876)
\psdots[dotsize=0.2,dotstyle=|](0.38,0.83796877)
\psdots[dotsize=0.2,dotstyle=|](1.18,0.8179687)
\psdots[dotsize=0.2,dotstyle=|](1.6,0.83796877)
\psdots[dotsize=0.2,dotstyle=|](2.0,0.83796877)
\psdots[dotsize=0.2,dotstyle=|](2.4,0.83796877)
\psdots[dotsize=0.2,dotstyle=|](2.8,0.8179687)
\psdots[dotsize=0.2,dotstyle=|](3.18,0.83796877)
\psdots[dotsize=0.2,dotstyle=|](3.96,0.8179687)
\psdots[dotsize=0.2,dotstyle=|](4.4,0.8179687)
\psdots[dotsize=0.2,dotstyle=|](4.8,0.8179687)
\psdots[dotsize=0.2,dotstyle=|](5.2,0.8179687)
\psdots[dotsize=0.2,dotstyle=|](5.58,0.83796877)
\psdots[dotsize=0.2,dotstyle=|](5.98,0.8179687)
\psdots[dotsize=0.2,dotstyle=|](6.78,0.83796877)
\psline[linewidth=0.03cm](6.38,1.1379688)(6.38,0.53796875)
\psline[linewidth=0.04cm,linecolor=color5319](3.2,0.8179687)(3.98,0.8179687)
\psline[linewidth=0.03cm](3.56,1.1379688)(3.56,0.53796875)
\psbezier[linewidth=0.03,linecolor=color5319,arrowsize=0.093cm
2.0,arrowlength=1.4,arrowinset=0.4]{<-}(3.72,0.8116272)(3.72,0.081298605)(3.4,0.05796875)(3.36,0.7744346)
\usefont{T1}{ptm}{m}{n}
\rput(2.0975,-0.18203124){\small $I_{k_1}$}
\usefont{T1}{ptm}{m}{n}
\rput(5.0475,-0.20203125){\small $I_{(k_1+1)}$}
\psdots[dotsize=0.2,dotstyle=|](7.18,0.83796877)
\psline[linewidth=0.02cm,tbarsize=0.07055555cm
5.0,bracketlength=0.15]{[-}(0.78,-0.16203125)(1.08,-0.16203125)
\psline[linewidth=0.02cm,tbarsize=0.07055555cm
5.0,rbracketlength=0.15]{-)}(5.98,-0.16203125)(6.36,-0.16203125)
\psline[linewidth=0.02cm,tbarsize=0.07055555cm
5.0,rbracketlength=0.15]{-)}(3.18,-0.18203124)(3.56,-0.18203124)
\psline[linewidth=0.02cm,arrowsize=0.05291667cm
2.0,arrowlength=1.4,arrowinset=0.4]{<->}(3.6,2.2179687)(6.36,2.2179687)
\usefont{T1}{ptm}{m}{n}
\rput(4.8775,2.5179687){\small $\frac{1}{M_1}$}
\psdots[dotsize=0.16600001,dotstyle=|](3.6,2.2179687)
\psdots[dotsize=0.16600001,dotstyle=|](6.36,2.2379687)
\psline[linewidth=0.02cm,tbarsize=0.07055555cm
5.0,bracketlength=0.15]{[-}(3.58,-0.18203124)(3.88,-0.18203124)
\end{pspicture}
}
   \vskip 0.3cm
\textit{Fig.\,4-b}.\quad $k_1\in J_1^g$ on the right side.
\end{center}

This finishes item (iii) i.e.\ the definition of $\tau_2$ on the
``good'' intervals $I_{k_1}.$ Noting that on this set we have
$\tau_1 \ne \tau_2$ only on $M_1-3$ many intervals of length
$\tfrac1{M_2}$ we obtain the estimate \eqref{K8a}.

\bigskip

To show (iv) let us first consider the ``singular'' interval
$I_1$, on which we have $\tau_1(x)=\tfrac{M_1-3}{2}$ and $\phi^1
(T^{(\tau_1)}_{\alpha_1}(x))
=\phi^1(T^{(\tau_1)}_{\alpha_1}(x))-\phi^1(x)= \tfrac{M_1-3}{2}.$
For the subintervals $I_{1,k_2}$ of $I_1$, define the set of good
indices as $J^{1,g}=J^{1,g,l}\cup J^{1,g,r}$ where
$$J^{1,g,l}=\{\tfrac{(M_1-3)(M_1-1)}{2}+1,\dots , \tfrac{m_2-1}{2}\}, \quad J^{1,g,r}=\{\tfrac{m_2+1}{2},\dots ,m_2-\tfrac{(M_1-3)(M_1+1)}{2}\}.$$
Let us start by considering $k_2\in J^{1,g,r}.$ We define
$$\tau_2(x)=\tau_1(x)+\frac{M_1-3}{2} M_1=\frac{(M_1-3)(M_1+1)}{2}, \quad x\in I_{1,k_2},k_2 \in J^{1,g,r}.$$
First note that $T^{(\tau_2)}_{\alpha_2}$ then maps the intervals
$I_{1,k_2}$, for $k_2 \in J^{1,g,r}$, to the intervals
$$
I_{\frac{M_1-1}{2}, \frac{m_2+1}{2}+ \frac{(M_1-3)(M_1+1)}{2}},\
\dots\ , I_{\frac{M_1-1}{2}, m_2}.
$$
Observe that, for $x$ as above, the orbit
$(T^{i}_{\alpha_2}(x))^{\tau_2(x)-1}_{i=0}$ always lies in the
right halfs of the respective intervals $I_{k_1}$.

Let us count how often the orbit $(T^{i}_{\alpha_2}(x))^{\tau_2(x)-1}_{i=0}$ visits $L^2$ and $R^2$ respectively, for $x\in I_{1,k_2}$ and
$k_2\in J^{1,g,r}$. The first $\tau_1(x)=\tfrac{M_1-3}{2}$ elements
of this orbit are all in $L^2$ which yields, similarly as in the induction step $n=1$,
$$\phi^2(T^{(\tau_1)}_{\alpha_2}(x))-\phi^2(x)=\phi^1 (T^{(\tau_1)}_{\alpha_1}(x))-\phi^1(x) =\frac{M_1-3}{2}.$$
But the next $M_1$ many elements of this orbit, namely
$$(T^{i}_{\alpha_2}(x))^{\tau_1(x)+M_1-1}_{i=\tau_1(x)}$$ visit
$R^2$ one time more often than $L^2$ as the unique element of this
orbit which lies in $\IM^1$ belongs to the right half of
$\IM^1$.\\
This phenomenon repeats on the orbit
$(T^{i}_{\alpha_2}(x))^{\tau_1(x)+\tfrac{M_1-3}{2}M_1-1}_{i=0}$
for $\tfrac{M_1-3}{2}$ many times so that
\begin{align}\label{K17}
\begin{split}
\phi^2(x)-\phi^2(T^{(\tau_2)}_{\alpha_2}(x))
& =\phi^2(x)-\phi^2(T^{(\tau_1)}_{\alpha_2}(x))) +\phi^2 (T^{(\tau_1)}_{\alpha_2}(x))-\phi^2 (T^{(\tau_2)}_{\alpha_2}(x)
\\ & = -\frac{M_1-3}{2} + \frac{M_1-3}{2} \\
& =0, \quad  \mbox{for} \ x\in I_{1,k_2} \ \mbox{and} \ k_2\in  J^{1,g,r}.
\end{split}
\end{align}
This takes care of $I_{1,k_2}$ with $k_2\in J^{1,g,r}.$

For $x\in I_{1,k_2}$ with $k_2\in J^{1,g,l}$, the left half of the
``good'' intervals, we define symmetrically
$$
\tau_2(x)=\tau_1(x)-\tfrac{M_1-3}{2}M_1=-\tfrac{(M_1-3)(M_1-1)}{2}.
$$
A similar analysis as above shows that $T^{(\tau_2)}_{\alpha_2}$ maps the intervals $I_{1,k_2},$ where $k_2\in J^{1,g,l},$ to the intervals
$I_{\tfrac{M_1-1}{2},1},\dots ,I_{\tfrac{M_1-1}{2},\tfrac{m_2-1}{2} - \tfrac{(M_1-3)(M_1-1)}{2}}.$
Hence by a symmetric reasoning we again obtain equality \eqref{K17} for $x$ in the intervals $I_{1,k_2},$ and for $k_2\in J^{1,g,r}$ too.

Now we have to deal with the ``singular'' subintervals $I_{1,k_2}$, where $k_2\in J^{1,s}$, and the singular indices are given by
\begin{eqnarray*}
J^{1,s} &=&\{1,\dots ,m_2\} \setminus J^{1,g} \\
&=&\{1,\dots ,\tfrac{(M_1-3)(M_1-1)}{2}\} \cup \{m_2 -
\tfrac{(M_1-3)(M_1+1)}{2}+1,\dots ,m_2\},
\end{eqnarray*}
which consists of $M_1(M_1-3)$ many indices.
\\
The map $T^{(\tau_2)}_{\alpha_2}$ has to map these intervals $I_{1,k_2},$ where $k_2\in J^{1,s}$, to the ``remaining gaps'' $I_{\tfrac{M_1-1}{2},l_2}$ in the interval
$I_{\tfrac{M_1-1}{2}}$, where $l_2\in\{\tfrac{m_2+1}{2} - \tfrac{(M_1-3)(M_1-1)}{2},\dots ,\tfrac{m_2+1}{2}+\tfrac{(M_1-3)(M_1+1)}{2}-1\}.$
Note that the corresponding intervals $I_{\tfrac{M_1-1}{2},l_2}$ are -- roughly speaking -- in the middle of the interval $I_{\tfrac{M_1-1}{2}}$, while the
intervals $I_{1,k_2}$, with
$k_2\in J^{1,s}$, are at the boundary of $I_1$.
\\
To define $\tau_2$ on $I_{1,k_2}$, for $k_2\in J^{1,s}$, choose
any function $\tau_2$ taking values in $\{-M_2+1,\dots ,M_2-1\}$,
satisfying \eqref{p24a} (resp.~\eqref{p24b}) as above, which
induces a bijection between the intervals $(I_{1,k_2})_{k_2\in
J^{1,s}}$ and the intervals $I_{\tfrac{M_1-1}{2},l_2}$ considered
above.

\begin{center}
\scalebox{1} 
{
\begin{pspicture}(0,-2.6459374)(13.4,1.64)
\definecolor{color1468}{rgb}{0.0,0.6,0.8}
\definecolor{color1472}{rgb}{1.0,0.0,0.2}
\definecolor{color1477}{rgb}{0.4,1.0,0.0}
\definecolor{color1479}{rgb}{0.2,1.0,0.0}
\psline[linewidth=0.04cm](1.96,0.44)(5.24,0.44)
\psline[linewidth=0.04cm](7.44,0.42)(10.72,0.42)
\psline[linewidth=0.04cm,linestyle=dashed,dash=0.16cm
0.16cm](5.6,0.42)(6.98,0.42)
\psdots[dotsize=0.24,dotstyle=|](7.58,0.48)
\psdots[dotsize=0.24,dotstyle=|](10.36,0.46)
\psdots[dotsize=0.24,dotstyle=|](4.76,0.48)
\psdots[dotsize=0.24,dotstyle=|](1.94,0.48)
\psarc[linewidth=0.03,linecolor=color1468,arrowsize=0.073cm
2.0,arrowlength=1.4,arrowinset=0.4]{->}(5.26,4.48){4.86}{-122.50566}{-57.52881}
\psline[linewidth=0.04cm,linecolor=color1472](1.96,0.44)(2.34,0.44)
\psline[linewidth=0.04cm,linecolor=color1472](4.6,0.44)(4.76,0.44)
\psline[linewidth=0.04cm,linecolor=color1472](8.56,0.42)(9.16,0.42)
\psline[linewidth=0.04cm,linecolor=color1468](2.38,0.44)(3.36,0.44)
\psline[linewidth=0.04cm,linecolor=color1468](7.58,0.42)(8.56,0.42)
\psline[linewidth=0.04cm,linecolor=color1477](9.2,0.42)(10.34,0.4)
\psline[linewidth=0.04cm,linecolor=color1477](3.4,0.44)(4.56,0.44)
\psarc[linewidth=0.03,linecolor=color1479,arrowsize=0.073cm
2.0,arrowlength=1.4,arrowinset=0.4]{->}(6.76,3.96){4.6}{-129.69267}{-51.797882}
\psarc[linewidth=0.04,linecolor=color1472,arrowsize=0.05291667cm
2.0,arrowlength=1.4,arrowinset=0.4]{<-}(5.48,-3.86){5.48}{51.759083}{128.55324}
\psarc[linewidth=0.04,linecolor=color1472,arrowsize=0.073cm
2.0,arrowlength=1.4,arrowinset=0.4]{<-}(6.75,-2.33){3.49}{51.759083}{127.19101}
\psdots[dotsize=0.198,dotstyle=|](3.38,0.44)
\psdots[dotsize=0.198,dotstyle=|](8.96,0.4)
\psline[linewidth=0.04cm,linecolor=color1468](2.4,-1.14)(3.38,-1.14)
\psline[linewidth=0.04cm,linecolor=color1477](3.42,-1.14)(4.58,-1.14)
\usefont{T1}{ptm}{m}{n}
\rput(2.8075,-0.68){\small $I^{1,g,l}$}
\usefont{T1}{ptm}{m}{n}
\rput(4.0775,-0.7){\small $I^{1,g,r}$}
\psdots[dotsize=0.198,dotstyle=|](3.38,-1.12)
\psdots[dotsize=0.198,dotstyle=|](2.38,-1.14)
\psdots[dotsize=0.198,dotstyle=|](4.58,-1.14)
\usefont{T1}{ptm}{m}{n}
\rput(3.4475,-1.64){\small $I^{1,s}$}
\usefont{T1}{ptm}{m}{n}
\rput(11.8375,-2.48){\small $\IM^1$}
\usefont{T1}{ptm}{m}{n}
\rput(1.9275,0.06){\small $0$}
\psdots[dotsize=0.198,dotstyle=|](4.58,0.44)
\psdots[dotsize=0.198,dotstyle=|](8.58,0.4)
\psdots[dotsize=0.198,dotstyle=|](9.16,0.4)
\psdots[dotsize=0.198,dotstyle=|](2.36,0.46)
\usefont{T1}{ptm}{m}{n}
\rput(4.8975,0.08){\small $\frac 1{M_1}$}
\psline[linewidth=0.04cm,tbarsize=0.07055555cm
5.0,rbracketlength=0.15]{[-)}(1.96,-2.18)(4.76,-2.18)
\psline[linewidth=0.04cm,tbarsize=0.07055555cm
5.0,rbracketlength=0.15]{[-)}(7.54,-2.16)(10.34,-2.14)
\usefont{T1}{ptm}{m}{n}
\rput(3.2875,-2.4){\small $I_1$}
\usefont{T1}{ptm}{m}{n}
\rput(9.0075,-2.42){\small $I_{\frac{M_1-1}{2}}$}
\psline[linewidth=0.04cm,tbarsize=0.07055555cm
5.0,rbracketlength=0.15]{[-)}(10.38,-2.14)(13.38,-2.16)
\psline[linewidth=0.04cm,linestyle=dashed,dash=0.16cm
0.16cm](5.32,-2.16)(6.92,-2.16)
\psline[linewidth=0.04cm,linecolor=color1472](2.0,-1.74)(2.38,-1.74)
\psline[linewidth=0.04cm,linecolor=color1472](4.58,-1.76)(4.74,-1.76)
\psdots[dotsize=0.198,dotstyle=|](4.74,-1.76)
\psdots[dotsize=0.198,dotstyle=|](4.56,-1.76)
\psdots[dotsize=0.198,dotstyle=|](2.38,-1.74)
\psdots[dotsize=0.198,dotstyle=|](1.98,-1.74)
\psline[linewidth=0.02cm,arrowsize=0.05291667cm
2.0,arrowlength=1.4,arrowinset=0.4]{<-}(2.5,-1.78)(3.04,-1.68)
\psline[linewidth=0.02cm,arrowsize=0.05291667cm
2.0,arrowlength=1.4,arrowinset=0.4]{->}(3.82,-1.7)(4.42,-1.78)
\end{pspicture}
} \medskip
    \textit{Fig.\,5}.\quad $\tau_2$ for the ``singular"
indices on the left side.
 \end{center}
\textit{In this drawing, the interval $I^{1,g,l}$ is the union of
the intervals $I_{1,k_2}$ with $k_2\in J^{1,g,l}.$ A similar
convention holds for $I^{1,g,r}$ and $I^{1,s}$ (which is not an
interval anymore). }
\par\medskip

For each such $\tau_2$ we obtain, for $x\in I_{1,k_2},k_2\in
J^{1,s}$, from Lemma \ref{L2.2}
\begin{equation}\label{K19}
\begin{split}
\phi^2(x)-\phi^2 (T^{(\tau_2)}_{\alpha_2}(x))&\le
-\frac{m_2}{2M_1} +10M^3_1 +2\frac{(M_1-3)(M_1-1)}{2}4M^2_1
\\ &\le -\frac{m_2}{2M_1} +20M^4_1. \qquad \hspace{4.2cm}
\end{split}
\end{equation}
Indeed, the leading term $\tfrac{-m_2}{2M_1}$ and the first error
term $10M^3_1$ in the first line above  come from Lemma
\ref{L2.2}-(ii) when comparing the difference of the value of
$\phi^2$ on the interval $I_{1,1}$ to that of
$I_{\tfrac{M_1-1}{2}, \tfrac{m_2+1}{2}}$. For the difference of
the value of $\phi^2$ on $I_{1,k_2}$ and
$I_{\tfrac{M_1-1}{2},l_2}$, for arbitrary $k_2\in J^{1,s}$ and
$l_2\in\{ \tfrac{m_2+1}{2}-\tfrac{(M_1-3)(M_1-1)}{2},\dots
,\tfrac{m_2+1}{2}+\tfrac{(M_1-3)(M_1-1)}{2}\}$ we apply for both
cases at most $\tfrac{(M_1-3)(M_1+1)}{2}$ times estimate (i) of
Lemma \ref{L2.2} which gives  \eqref{K19}.
\\
In particular, for $m_2>40M^5_1$, which of course we shall assume,
we have that
$$\phi^2(x)-\phi^2 (T^{(\tau_2)}_{\alpha_2}(x)) \le 0, \ \qquad \mbox{for} \ x\in I_{1,k_2}, k_2\in J^{1,s}.$$
There are $M_1(M_1-3) =M^2_1-3M_1$ many intervals $I_{1,k_2}$ with
$k_2\in J^{1,s}$ each of length $1/M_2.$ Hence we may estimate the
``singular mass'' on the interval $I_1$ by
\begin{align}
\label{K21}
\begin{split}
\sum\limits_{k_2\in J^{1,s}} \int_{I_{1,k_2}}[ \phi^2(x)-\phi^2
(T^{(\tau_2)}_{\alpha_2}(x))]\, dx
 & \le \big( - \frac{m_2}{2M_1} +20M^4_1\big) (M^2_1 -3M_1)\frac1{M_2}
\\ & \le -\frac12 +\frac{3}{2M_1} +  \frac{c(M_1)}{2m_2}.
\end{split}
\end{align}
where $c(M_1)$ is a constant depending on $M_1$ only.\footnote{We shall find it convenient in the sequel to write $c(M_1,M_2,\ldots ,M_i)$ for constants
depending only on the choice of the numbers $M_1,M_2,\ldots ,M_i$. The concrete numerical value of this expression may change, i.e.~become bigger, from one
line of reasoning to the next one, but at every stage it will be clear that an explicit bound for the respective meaning of the constant $c(M_1,M_2,\ldots ,M_i)$
could be given, at least in principle. In fact, we shall always have that the constants $c(M_1,M_2,\ldots ,M_i)$ used in the sequel are dominated by a
polynomial in the variables $M_1,M_2,\ldots ,M_i.$}

We still have another ``singular'' interval at the present
induction step $n=2$, namely $I_{M_1}$. The analysis for this case
is symmetric to the analysis of $I_1$ and -- after properly
defining $\tau_2$ on this interval $I_{M_1}$ -- we arrive at the
same estimate \eqref{K21}. In total, the thus obtain \eqref{p18}
by doubling the right hand side of \eqref{K21}, showing that the
``singular mass'' essentially equals $-1.$

Finally define the sets $J^g_2$ (resp.~$J^s_2$) of ``good'' (resp.~``singular'') indices at level 2 as
\begin{align*}
\begin{split}
J^g_2&=\{(k_1,k_2):(k_1\in J^g_1 \ \mbox{and}  \ 1\le k_2\le m_2), \  \mbox{or}
 \ (k_1\in J^s_1 \ \mbox{and} \ k_2\in J^{k_1,g})\},  \\
 J^s_2&=\{(k_1,k_2): k_1\in J^s_1 \mbox{ and} \ k_2\in J^{k_1,2}\}.
\end{split}
\end{align*}

This finishes the inductive step for $n=2$.

\vskip6pt
\noindent{\bf General inductive step.}\
    Suppose that the prime
numbers $m_1, \dots ,m_{n-1}$ have been defined. We use the
notation $\alpha_{n-1}=\tfrac{1}{M_1}+\dots +\tfrac{1}{M_{n-1}}$,
where $M_{n-1}=m_1\cdot m_2\cdot \ldots \cdot m_{n-1}.$
\\
For a prime $m_n$ satisfying the condition of Lemma
\ref{RelPrimeLemma}, and to be specified below, let
$M_n=m_1\cdot\ldots \cdot m_n$ and
$$
    L^n=\left[0,\frac12 -\frac{1}{2M_n}\right),\
    R^n=\left[\frac12 +\frac{1}{2M_n}, 1\right),\
    I^n_\text{middle} =\left[\frac12-\frac{1}{2M_n}, \frac12 +\frac{1}{2M_n}\right).
$$
For $1\le k_1\le m_1,\dots ,1\le k_n\le m_n,$ let
$$I_{k_1,\dots ,k_n} = [\tfrac{k_1-1}{M_1}+\tfrac{k_2-1}{M_2}+\dots +\tfrac{k_n-1}{M_n}\ ,
\tfrac{k_1-1}{M_1}+\tfrac{k_2-1}{M_2}+\dots +\tfrac{k_n}{M_n}).
$$
For $x\in I_{1,\dots ,1}$ and $j\in\{0,\dots ,M_n\}$ we define,
similarly as in \eqref{21}, $\phi^n(x)=0$ and
\begin{align}\label{istep}
\begin{split}
\phi^n(T^j_{\alpha_n}(x))=\quad &\#\{i\in \{0,\ldots,j-1\}:
T_{\alpha_2}^i(x) \in L^n \} \hspace{0.15cm}
\\   -  &\#\{i\in \{0,\ldots,j-1\}: T_{\alpha_2}^i(x) \in R^n \},
\end{split}
\end{align}
where $\alpha_n=\alpha_{n-1}+\tfrac{1}{M_n}$ and $M_n=M_{n-1}
m_n.$ We also let $\psi^n(x)=1-\phi^n(x),$ for $x\in[0,1)$.

\begin{lemma}[Oscillations of $\phi^n$]\label{res-1}
For given $M_1,\dots ,M_{n-1}$ there is a constant $c(M_1,\dots
,M_{n-1})$ depending only on $M_1,\dots ,M_{n-1}$, such that for
all $m_n$ as above we have
\begin{enumerate}
    \item $|\phi^n(x)-\phi^n(x\oplus \tfrac{1}{M_n})|\le c(M_1,\dots ,M_{n-1}),$

    \item for each $1\le k'_1, k''_1 \le M_1,\dots ,1\le k'_{n-1},k''_{n-1}\le m_{n-1},$
$$
    {\phi^n}_{| I_{k'_1,\dots ,k'_{n-1},(m_n+1)/2}}
    -{\phi^n}_{|_{I_{k''_1,\dots ,k''_{n-1},1}}} \geq \tfrac{m_n}{2M_{n-1}}-c(M_1,\dots ,M_{n-1}),
$$
    \item for each $1\le k'_1, k''_1 \le M_1,\dots ,1\le
k'_{n-1},k''_{n-1}\le m_{n-1},$ and $1\le k'_n,k''_n\le m_n,$ with
$\min \{k'_n,m_n-k'_n\}< M_{n-1}$ and $\min\{k''_n,m_n-k''_n\}
<M_{n-1}$ we have
\begin{align}\label{I3alpha}
\left|{\phi^n}_{|_{I_{k'_1,\dots
,k'_{n-1},k'_n}}}-{\phi^n}_{|_{I_{k''_1,\dots
,k''_{n-1},k''_n}}}\right|\le c(M_1,\dots ,M_{n-1}).
\end{align}
\end{enumerate}
\end{lemma}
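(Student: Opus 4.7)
The plan is to prove (i), (ii), and (iii) by induction on $n$, using Lemma \ref{L2.2} as the base case and imitating its argument at each level. The driving fact is that analogs of \eqref{K2} and \eqref{K3} hold: since $\alpha_n=\alpha_{n-1}+\tfrac{1}{M_n}$ and $M_{n-1}\alpha_{n-1}\in\Z$, one has $T_{\alpha_n}^{M_{n-1}}(x)=x\oplus\tfrac{M_{n-1}}{M_n}$; and using the congruences from the proof of Lemma \ref{RelPrimeLemma} together with a direct calculation of $(m_n-a)\alpha_n$ modulo $1$, one finds integers $K_n=m_n-O(1)$ and $D_n=O(1)$, both bounded by a constant depending only on $M_1,\dots,M_{n-1}$, for which $T_{\alpha_n}^{K_n}(x)=x\ominus\tfrac{D_n}{M_n}$. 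Composing $O(M_{n-1})$ iterates of these two maps as in \eqref{p16} then yields a power $J_n\le M_n$ with $T_{\alpha_n}^{J_n}(x)=x\oplus\tfrac{1}{M_n}$.

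For (i), the quantity $\phi^n(x\oplus\tfrac{1}{M_n})-\phi^n(x)$ equals, by \eqref{istep}, the imbalance of visits of the orbit $(T_{\alpha_n}^i(x))_{i=0}^{J_n-1}$ to $L^n$ versus $R^n$. The analog of the ``Claim'' in the proof of Lemma \ref{L2.2} is that the imbalance of the sub-orbit $(T_{\alpha_n}^i(x))_{i=0}^{K_n-1}$ is bounded by a constant $c(M_1,\dots,M_{n-1})$: Lemma \ref{RelPrimeLemma} guarantees that the full orbit of length $M_n$ visits $L^n$ and $R^n$ equally often (since it hits each $I_{k_1,\dots,k_n}$ exactly once), and $M_{n-1}$ shifted copies of the sub-orbit by iterates of $T_{\alpha_n}^{K_n}$ tile this full orbit up to $O(M_{n-1})$ residual points; these residuals each contribute at most $1$ to the imbalance. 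Summing over the $O(M_{n-1})$ such sub-orbits that compose $J_n$ yields (i).

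For (ii), the displacement from $I_{k''_1,\dots,k''_{n-1},1}$ to $I_{k'_1,\dots,k'_{n-1},(m_n+1)/2}$ is, modulo an $O(\tfrac{M_{n-1}}{M_n})$ correction controlled by (i), realized by $\lfloor\tfrac{m_n}{2M_{n-1}}\rfloor$ iterates of $T_{\alpha_n}^{M_{n-1}}$ (each shifting by $\tfrac{M_{n-1}}{M_n}$). For each such iterate, the sub-orbit of length $M_{n-1}$ hits every block $I_{l_1,\dots,l_{n-1}}$ exactly once, so its $L^n$-vs-$R^n$ imbalance coincides with the $(n-1)$st-level imbalance; the inductive construction of $\tau_{n-1}$ together with the fact that these sub-orbits lie in the ``left region'' of the current $I_{k_1,\dots,k_{n-1}}$ guarantees an imbalance of $+1$ up to an error of $c(M_1,\dots,M_{n-2})$. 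Summing gives the lower bound $\tfrac{m_n}{2M_{n-1}}-c(M_1,\dots,M_{n-1})$. For (iii), any two boundary-near intervals are connected by a power of $T_{\alpha_n}$ of size $O(M_{n-1})$, made of a few iterates of $T_{\alpha_n}^{K_n}$ (to adjust the index $k_n$) combined with $O(M_{n-1})$ single-step shifts (to adjust the earlier coordinates); the sub-orbit counting bound established in the proof of (i) then gives \eqref{I3alpha}.

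The main obstacle is the bookkeeping in (ii) required to verify that the intermediate iterates remain in the ``left region'' of each block they traverse, so that the inductive hypothesis produces an imbalance of $+1$ rather than $0$ or $-1$ per iterate. This requires tracking the precise fractional position at every stage and exploiting the fact that $m_n$ is chosen much larger than $M_{n-1}$, so the sub-orbits that cross into or out of $I^{n-1}_{\text{middle}}$ form a subset of size $O(M_{n-1})$ whose contribution can be absorbed into the error constant $c(M_1,\dots,M_{n-1})$.
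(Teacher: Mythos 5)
Your proposal follows the same broad strategy as the paper: establish analogs of \eqref{K2}, \eqref{K3}, build a power of $T_{\alpha_n}$ that shifts by $\tfrac1{M_n}$, bound the visit imbalance via the exact equidistribution of the length-$M_n$ orbit, and then derive (ii) and (iii) from (i) and the $T_{\alpha_n}^{M_{n-1}}$-dynamics. However there are two issues worth flagging, one of which is a real gap.

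The genuine gap is in your argument for (i). You justify the claim that the sub-orbit $(T_{\alpha_n}^i(x))_{i=0}^{K_n-1}$ has bounded imbalance by observing that the full orbit of length $M_n$ has imbalance zero (equidistribution from Lemma \ref{RelPrimeLemma}) and that $M_{n-1}$ shifted copies of the sub-orbit tile the full orbit up to a residual set whose size depends only on $M_1,\dots,M_{n-1}$. This only gives that the \emph{sum} of the imbalances of the $M_{n-1}$ shifted copies is small; it says nothing about any one copy, which a priori could have imbalance $+X$ cancelling against another copy with imbalance $-X$. The missing step is the bootstrap that appears in the proof of the Claim inside Lemma \ref{L2.2}\,(i) (and which the paper explicitly invokes for the general $n$): successive copies are translates by the small amount $D_n/M_n$, hence their imbalances differ by at most $O(D_n)$, so all $M_{n-1}$ copies have imbalances within $O(M_{n-1}^2)$ of each other; since they sum to a quantity of size $O(M_{n-1}^2)$, each individual imbalance is $O(M_{n-1}^2)$. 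Without this, (i) does not follow from what you wrote.

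Two smaller points. First, the framing ``by induction on $n$'' is misleading: the paper proves the lemma directly for each fixed $n$; the statement at level $n-1$ is never used, and your argument in fact never invokes it either. Second, in (ii) the reference to ``the inductive construction of $\tau_{n-1}$'' is a red herring, since $\phi^n$ is defined purely from the $T_{\alpha_n}$-orbit and has nothing to do with $\tau_{n-1}$. The correct (and exact) statement is geometric: for $j<\lfloor m_n/(2M_{n-1})\rfloor$, the block $(T_{\alpha_n}^{jM_{n-1}+i}(x))_{i=0}^{M_{n-1}-1}$ hits each $I_{l_1,\dots,l_{n-1}}$ once, at a position in the left half of that interval, so the count of visits to $L^n$ minus visits to $R^n$ is exactly $+1$ — not $+1$ up to an error $c(M_1,\dots,M_{n-2})$. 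This precision matters: a per-iterate error of that order, summed over $\sim m_n/(2M_{n-1})$ iterates, would produce an $O(m_n)$ total error and not the $m_n/(2M_{n-1})-c(M_1,\dots,M_{n-1})$ bound that the lemma asserts.
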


\begin{proof}{}
We may and do assume that $m_n\geq 5M_{n-1}.$

\noindent$\bullet$\textrm{ Proof of (i).}\
    We have $T_{\alpha_n}(x)=T_{\alpha_{n-1}}(T_{1/M_n}(x))$
so that
\begin{align}\label{I3a}
T_{\alpha_n}^{M_{n-1}}(x)=x\oplus \tfrac{M_{n-1}}{M_n}=x\oplus \tfrac{1}{m_n},
\end{align}
in perfect analogy to \eqref{K2}. As regards the analogue to
\eqref{K3} things now are somewhat more complicated. First note
that there is a unique number $1\le q_{n-1}\le M_{n-1}-1$ such
that
\begin{align}\label{I3b}
T^{q_{n-1}}_{\alpha_{n-1}}(x)=x\ominus \frac{1}{M_{n-1}}, \qquad x\in[0,1).
\end{align}
Indeed, by Lemma \ref{RelPrimeLemma}, when $q_{n-1}$ runs through
$\{1,\dots ,M_{n-1}-1\}$, the left hand side assumes the values
$x\ominus\tfrac{l_{n-1}}{M_{n-1}}$, where $l_{n-1}$ also runs
through $\{1,\dots ,M_{n-1}-1\}$.

\medskip
\noindent{\it Claim: Letting
$r_n=\lfloor\tfrac{m_n}{M_{n-1}}\rfloor$, the integer part of
$\tfrac{m_n}{M_{n-1}}$, and taking $q_{n-1}$ as in \eqref{I3b}, we
have
$$T^{r_n M_{n-1}+q_{n-1}}_{\alpha_n}(x)=x\oplus \frac{d_{n-1}}{M_n},$$
where $|d_{n-1}|<M_{n-1}.$}

\medskip\noindent
    Indeed, write $m_n$ as $m_n=r_nM_{n-1}+e_{n-1},$ for some $1\le e_{n-1} \le M_{n-1}$ to obtain
\begin{align*}
T^{r_n M_{n-1}+q_{n-1}}_{\alpha_n}(x) & = (T^{M_{n-1}}_{\alpha_n})^{r_n} \circ T^{q_{n-1}}_{\alpha_{n-1}} \circ T^{q_{n-1}}_{\tfrac{1}{M_n}} (x)
\\ & =x\oplus r_n\frac{M_{n-1}}{M_n}\ominus \frac{1}{M_{n-1}}\oplus \frac{q_{n-1}}{M_n}
\\ & =x\oplus \frac{m_n}{M_n} \ominus \frac{e_{n-1}}{M_n}\ominus\frac{1}{M_{n-1}}\oplus \frac{q_{n-1}}{M_n}
\\ & =x\oplus \frac{q_{n-1} - e_{n-1}}{M_n} =: x\oplus \frac{d_{n-1}}{M_n}
\end{align*}
which proves the claim.
\\
Define $s^{(1)}_{n-1}=q_{n-1}$ if $d_{n-1}=q_{n-1}-e_{n-1}
>0$ and $s^{(1)}_{n-1}=q_{n-1}+M_{n-1}$ otherwise, to obtain by
\eqref{I3a} and \eqref{I3b} that
$$T_{\alpha_n}^{r_nM_{n-1}+s^{(1)}_{n-1}}(x)=x\oplus \tfrac{l^{(1)}_{n-1}}{M_n},$$
for some $l^{(1)}_{n-1} \in \{1,\dots ,M_{n-1}\}.$ We also deduce
from \eqref{I3a} that $l^{(1)}_{n-1}$ must actually be in
$\{1,\dots ,M_{n-1}-1\}.$
\\
Repeat the above argument to find $s^{(2)}_{n-1}$ with $-2M_{n-1} < s^{(2)}_{n-1} <2M_{n-1}$ such that
$$T_{\alpha_n}^{2r_n M_{n-1}+s^{(2)}_{n-1}}(x)=x\oplus \tfrac{l_{n-1}^{(2)}}{M_n},$$
for some $l_{n-1}^{(2)}\in\{1,\dots ,M_{n-1}-1\}$. Continuing in the same way, we find numbers $s^{(j)}_{n-1}$, for $j=1,2,\dots ,M_{n-1}-1$ verifying
$-jM_{n-1}<s^{(j)}_{n-1} <j M_{n-1}$ such that
\begin{align}\label{I3bb}
T_{\alpha_n}^{jr_nM_{n-1}+s^{(j)}_{n-1}}(x)=x\oplus \tfrac{l^{(j)}_{n-1}}{M_n},
\end{align}
for some $l^{(j)}_{n-1}\in\{1,\dots ,M_{n-1}-1\}.$ Note that,
under the assumption $m_n \gg M_{n-1}$ so that $r_n \gg M_{n-1},$
 the elements in \eqref{I3bb} are all different. Therefore
$(l^{(j)}_{n-1})^{M_{n-1}-1}_{j=1}$ runs through all elements of
$\{1,\dots ,M_{n-1}-1\}$ when $j$ runs through $\{1,\dots
,M_{n-1}-1\}$; in particular there must be some $j_0$ such that
$$T_{\alpha_n}^{j_0 r_nM_{n-1}+s^{(j_0)}_{n-1}}(x)=x\oplus \tfrac{1}{M_n},$$
in analogy to \eqref{p16}.

Now observe that there is a constant $c(M_1,\dots , M_{n-1})$,
depending only on $M_1,\dots ,M_{n-1},$ such that, for $x\in
[0,1)$, the difference of the number of visits of the orbit
$(T^i_{\alpha_n}(x))^{r_n M_{n-1}+q_{n-1}}_{i=0}$ to $L^n$ and
$R^n$ is bounded in absolute value by the constant $c(M_1,\dots ,
M_{n-1})$. The argument is analogous to the corresponding one in
the proof of the claim which is part of the proof of  Lemma
\ref{L2.2}-(i), and therefore skipped.
\\
The numbers $j_0$ as well as $s^{(j_0)}_{n-1}$ are bounded in
absolute value by $M_{n-1}^2$ so that the difference of the visits
of the orbits $(T_{\alpha_n}^i (x))_{i=0}^{j_0
r_nM_{n-1}+s^{(j_0)}_{n-1}}$ to $L^n$ and $R^n$ are bounded in
absolute value by some constant $c(M_1,\dots ,M_{n-1})$. This
finishes the proof of assertion (i).

\medskip\noindent$\bullet$\textrm{ Proof of (ii).}\
    Suppose first, as in the proof of Lemma \ref{L2.2}-(ii), that
$(k'_1,\dots ,k'_{n-1})=(k''_1,\dots ,k''_{n-1})=:(k_1,\dots
,k_{n-1}).$ For $x\in I_{k_1,\dots ,k_{n-1},1}$ we have that each
of the orbits $(T_{\alpha_n}^{jM_{n-1}+i}(x))^{M_{n-1}-1}_{i=0}$,
for $j=0,\dots ,\lfloor\tfrac{m_n}{2M_{n-1}}\rfloor-1$ visits
$L^n$ one time more often than $R^n$. Hence
$$
\phi^n(T_{\alpha_n}^{\lfloor\tfrac{m_n}{2M_{n-1}}\rfloor
M_{n-1}}(x))-\phi^n(x)=\lfloor\tfrac{m_n}{2M_{n-1}}\rfloor>
\tfrac{m_n}{2M_{n-1}}-1.
$$
Noting that
$$
T_{\alpha_n}^{\lfloor\tfrac{m_n}{2M_{n-1}}\rfloor M_{n-1}}(x)=x\oplus \lfloor\tfrac{m_n}{2M_{n-1}}\rfloor \tfrac{M_{n-1}}{M_n}
$$
and
$$
\tfrac{m_n+1}{2M_n} - \lfloor\tfrac{m_n}{2M_{n-1}}\rfloor\tfrac{M_{n-1}}{M_n} \le \tfrac{M_{n-1}}{M_n},
$$
we obtain (ii) by using assertion (i), and possibly passing to a bigger constant $c(M_1,\dots ,M_{n-1})$.
\\
Finally the passage to general $(k'_1,\dots ,k'_{n-1})$ and
$(k''_1,\dots ,k''_{n-1})$ is done again, similarly as in the
proof of Lemma \ref{L2.2}, by repeated application of (i) and by
passing once more to a bigger constant $c(M_1,\dots ,M_{n-1})$.

\medskip\noindent$\bullet$\textrm{ Proof of (iii).}\
    Fix
$1\le k'_1,k''_1\le M_1,\dots ,1\le k'_{n-1}, k''_{n-1}\le
m_{n-1}$ and $1\le k'_n,k''_n\le m_n$ as above. Suppose, e.g.,
$k'_n\le M_{n-1}$ and $m_n-k''_n\le M_{n-1}$, the other three
cases being similar. Denote by $(k'''_1,\dots ,k'''_{n-1})$ the
index so that $I_{k'''_1,\dots ,k'''_{n-1}}= I_{k''_1,\dots
,k''_{n-1},k''_n}\oplus \tfrac{1}{M_{n-1}}$,
i.e.~$I_{k'''_1,\ldots ,k'''_{n-1}}$ is the right neighbour of
$I_{k''_1,\dots ,k''_{n-1}}.$ Now find $0\le q_{n-1} <M_{n-1}$
such that $T^{q_{n-1}}_{\alpha_{n-1}}$ maps $I_{k'_1,\dots
,k'_{n-1}}$ onto $I_{k'''_1,\dots ,k'''_{n-1}}$. Hence
$T^{q_{n-1}}_{\alpha_{n}}$ maps $I_{k'_1,\dots ,k'_{n-1},k'_n}$
onto $I_{k'''_1,\dots ,k'''_{n-1},k'_n+q_{n-1}}.$
\\
Finally note that the distance from the latter interval to $I_{k''_1,\dots ,k''_{n-1},k''_n}$ is bounded by $(2M_{n-1}+M_{n-1})\tfrac{1}{M_n}.$ Hence we
obtain \eqref{I3alpha} by applying $2M_{n-1}+M_{n-1}$ times assertion (i) and using $0\le q_{n-1}<M_{n-1}.$
\end{proof}

After this preparation we are ready for the inductive step from
$n-1$ to $n.$ Suppose that the following inductive hypotheses are
satisfied, for $1\le l\le n-1,$ functions $\tau_l :[0,1)\to
\{-M_l+1,\dots ,M_l-1\}$ and index sets $J^g_l,J^s_l$ contained in
$\{(k_1,\ldots ,k_l):1\le k_1\le m_1,\ldots , 1\le k_l \le m_l\}.$

\begin{enumerate}
\item
The measure preserving bijection $T_{\alpha_{n-1}}^{(\tau_{n-1})}:[0,1)\to [0,1)$ maps the intervals $I_{k_1,\dots ,k_l}$, for $1\le l <n-1$, and
$1\le k_1\le m_1, \ldots ,
1\le k_l \le m_l$, onto the intervals $T^{(\tau_l)}_{\alpha_l} (I_{k_1,\dots ,k_l}).$ It induces a permutation of the intervals
$I_{k_1,\dots ,k_{n-1}}$, where $1\le k_1\le m_1 ,\ldots ,1\le k_{n-1} \le m_{n-1}.$

    \item
When $\tau_{n-1}(x) >0$, we have
\begin{align}\label{B1}
T^i_{\alpha_{n-1}}(x) \notin I^{n-1}_{\text{middle}}, \qquad
i=0,\ldots ,\tau_{n-1}(x),
\end{align}
and, when $\tau_{n-1}(x)<0$, we have
\begin{align}\label{B1a}
T^i_{\alpha_{n-1}}(x)\notin I^{n-1}_{\text{middle}}, \qquad
i=\tau_{n-1}(x),\ldots ,0.
\end{align}

    \item There is a set of ``good'' indices
$J^g_{n-1} \subseteq \{1\le k_1 \le m_1,\ldots ,1\le k_{n-1}\le
m_{n-1}\}.$ For $(k_1,\ldots ,k_{n-2})\in J^g_{n-2}$ we have that
$(k_1,\ldots ,k_{n-2}, k_{n-1})\in J^g_{n-1}$ as well as
\begin{align}\label{C1}
\mu[I_{k_1,\ldots ,k_{n-2}} \cap \{\tau_{n-2}\neq \tau_{n-1}\}]\leq \tfrac{M_{n-2}}{m_{n-1}} \mu [I_{k_1,\ldots ,k_{n-2}}],
\end{align}
and
\begin{align}\label{I4}
\begin{split}
    \sum_{(k_1,\dots ,k_{n-2})\in J^g_{n-2}} & \int_{I_{k_1,\dots ,k_{n-2}}}
    \Big|[\phi^{n-2}(x)-\phi^{n-2}(T^{(\tau_{n-2})}_{\alpha_{n-2}}(x))]\\
   & \hskip 3cm -[\phi^{n-1}(x)-\phi^{n-1} (T^{(\tau_{n-1})}_{\alpha_{n-1}}(x))] \Big|
    \, dx\\
    & \hskip 1cm\le \tfrac{c(M_1,\dots ,M_{n-2})}{m_{n-1}}.
\end{split}
\end{align}

    \item
There is a set of ``singular'' indices $J_{n-1}^s\subseteq \{(k_1,\dots ,k_{n-1}):1\le k_1\le m_1,\dots ,1\le k_{n-1}\le m_{n-1}\}$, disjoint from $J_{n-1}^g$, such
that $J^s_{n-1}$ consists of less than $2M^2_{n-1}$ many elements and such that
\begin{align}\label{I5a}
\begin{split}
\phi^{n-1}(x)-\phi^{n-1}(T_{\alpha_{n-1}}^{(\tau_{n-1})}(x)) \ \le \ 0, \quad \mbox{for }&  x\in I_{k_1,\dots ,k_{n-1}}\\
&\mbox{and} \ (k_1,\dots ,k_{n-1}) \in J^s_{n-1},
\end{split}
\end{align}
and
\begin{align}\label{I5b}
\begin{split}
\sum\limits_{(k_1,\dots ,k_{n-1})\in J^s_{n-1}}\ \ \ \int\limits_{I_{k_1,\dots ,k_{n-1}}}
    [\phi^{n-1}&(x)-\phi^{n-1}(T_{\alpha_{n-1}}^{(\tau_{n-1})}(x))]\, dx\\
    &\le -1+\tfrac{3}{m_1} +\tfrac{c(M_1)}{m_2} + \dots +\tfrac{c(M_1,\dots ,M_{n-2})}{m_{n-1}},
\end{split}
\end{align}
where $c(\cdot)$ are constants depending only on $(\cdot)$.
\item
On the middle interval $I^1_{\text{middle}} =I^1_{\tfrac{M_1+1}{2}}$ we have $\tau_1=\tau_2=\dots =\tau_{n-1}=0$ and $I^1_\text{middle}$ together with the
intervals $(I_{k_1,\dots ,k_{n-1}})_{(k_1,\dots ,k_{n-1})\in J^g_{n-1} \cup J^s_{n-1}}$ form a partition of $[0,1)$.
\end{enumerate}
We have to define $\tau_n$ as well as $J^g_n$ and $J^s_n$ so that the above list is satisfied with $n-1$ replaced by $n$.

Let us illustrate graphically some  features of this construction.
Namely, the fractal structure of the singular set and the
resulting quasi-cost.
\par\medskip
\begin{center}
\scalebox{1} 
{
\begin{pspicture}(0,-1.805)(13.282187,1.805)
\definecolor{color2624b}{rgb}{0.4,1.0,1.0}
\definecolor{color2626b}{rgb}{1.0,0.0,0.2}
\psframe[linewidth=0.02,dimen=outer,fillstyle=solid,fillcolor=color2624b](6.2703233,0.9009375)(1.4621875,0.4809375)
\psframe[linewidth=0.02,dimen=outer,fillstyle=solid,fillcolor=color2624b](13.282187,0.9009375)(8.462188,0.4609375)
\psframe[linewidth=0.02,dimen=outer,fillstyle=solid,fillcolor=color2626b](3.6421876,0.9009375)(1.4421875,0.5009375)
\psframe[linewidth=0.02,dimen=outer,fillstyle=solid,fillcolor=color2626b](13.262188,0.8809375)(11.062187,0.4809375)
\psframe[linewidth=0.02,dimen=outer,fillstyle=solid,fillcolor=color2624b](6.2421875,0.3209375)(1.4421875,-0.0990625)
\psframe[linewidth=0.02,dimen=outer,fillstyle=solid,fillcolor=color2624b](13.2421875,0.3209375)(8.442187,-0.0990625)
\psframe[linewidth=0.02,dimen=outer,fillstyle=solid,fillcolor=color2624b](6.2421875,-0.2790625)(1.4421875,-0.6990625)
\psframe[linewidth=0.02,dimen=outer,fillstyle=solid,fillcolor=color2624b](13.2421875,-0.2790625)(8.442187,-0.6990625)
\psframe[linewidth=0.02,dimen=outer,fillstyle=solid,fillcolor=color2624b](6.2421875,-0.8990625)(1.4421875,-1.3190625)
\psframe[linewidth=0.02,dimen=outer,fillstyle=solid,fillcolor=color2624b](13.2421875,-0.8990625)(8.442187,-1.3190625)
\psframe[linewidth=0.02,dimen=outer,fillstyle=solid,fillcolor=red](1.8821875,0.3209375)(1.4621875,-0.0990625)
\psframe[linewidth=0.02,dimen=outer,fillstyle=solid,fillcolor=red](3.6821876,0.3009375)(3.1021874,-0.0990625)
\psframe[linewidth=0.02,dimen=outer,fillstyle=solid,fillcolor=red](11.622188,0.3209375)(11.042188,-0.0990625)
\psframe[linewidth=0.02,dimen=outer,fillstyle=solid,fillcolor=red](13.262188,0.3209375)(12.842188,-0.0990625)
\psframe[linewidth=0.02,dimen=outer,fillstyle=solid,fillcolor=red](1.5421875,-0.2790625)(1.4221874,-0.6990625)
\psframe[linewidth=0.02,dimen=outer,fillstyle=solid,fillcolor=red](1.8221875,-0.2790625)(1.6821876,-0.6990625)
\psframe[linewidth=0.02,dimen=outer,fillstyle=solid,fillcolor=red](3.2221875,-0.2790625)(3.1021874,-0.6790625)
\psframe[linewidth=0.02,dimen=outer,fillstyle=solid,fillcolor=red](3.6421876,-0.2790625)(3.5221875,-0.6990625)
\psframe[linewidth=0.02,dimen=outer,fillstyle=solid,fillcolor=red](11.142187,-0.2790625)(11.022187,-0.6990625)
\psframe[linewidth=0.02,dimen=outer,fillstyle=solid,fillcolor=red](11.602187,-0.2790625)(11.482187,-0.6990625)
\psframe[linewidth=0.02,dimen=outer,fillstyle=solid,fillcolor=red](12.982187,-0.2790625)(12.862187,-0.6990625)
\psframe[linewidth=0.02,dimen=outer,fillstyle=solid,fillcolor=red](13.2421875,-0.2790625)(13.122188,-0.6990625)
\usefont{T1}{ptm}{m}{n}
\rput(0.5496875,0.7209375){\small $n=1$}
\usefont{T1}{ptm}{m}{n}
\rput(0.5296875,0.1209375){\small $n=2$}
\usefont{T1}{ptm}{m}{n}
\rput(0.5496875,-0.4790625){\small $n=3$}
\usefont{T1}{ptm}{m}{n}
\rput(0.5296875,-1.0790625){\small $n=4$}
\psline[linewidth=0.04cm,tbarsize=0.07055555cm
5.0,rbracketlength=0.15]{|-)}(1.4421875,1.3009375)(3.6421876,1.3009375)
\psline[linewidth=0.04cm,tbarsize=0.07055555cm
5.0,rbracketlength=0.15]{|-)}(6.2421875,1.2809376)(8.442187,1.2809376)
\psline[linewidth=0.04cm,tbarsize=0.07055555cm
5.0,rbracketlength=0.15]{|-)}(11.042188,1.3209375)(13.2421875,1.3209375)
\psline[linewidth=0.04cm,linestyle=dashed,dash=0.16cm
0.16cm](3.8021874,1.3009375)(5.9821873,1.2809376)
\psline[linewidth=0.04cm,linestyle=dashed,dash=0.16cm
0.16cm](8.662188,1.3009375)(10.902187,1.3009375)
\usefont{T1}{ptm}{m}{n}
\rput(2.4296875,1.6009375){\small $I_1$}
\usefont{T1}{ptm}{m}{n}
\rput(7.2596874,1.5809375){\small $\IM^1$}
\usefont{T1}{ptm}{m}{n}
\rput(12.049687,1.6209375){\small $I_{M_1}$}
\psline[linewidth=0.032cm,linecolor=color2626b](12.882188,-0.8990625)(12.882188,-1.2990625)
\psline[linewidth=0.032cm,linecolor=color2626b](12.962188,-0.8990625)(12.962188,-1.2990625)
\psline[linewidth=0.032cm,linecolor=color2626b](11.022187,-0.8790625)(11.022187,-1.2790625)
\psline[linewidth=0.032cm,linecolor=color2626b](11.082188,-0.8790625)(11.082188,-1.2790625)
\psline[linewidth=0.032cm,linecolor=color2626b](3.1021874,-0.8990625)(3.1021874,-1.2990625)
\psline[linewidth=0.032cm,linecolor=color2626b](3.1621876,-0.8990625)(3.1621876,-1.2990625)
\psline[linewidth=0.032cm,linecolor=color2626b](1.7221875,-0.8990625)(1.7221875,-1.2990625)
\psline[linewidth=0.032cm,linecolor=color2626b](1.8221875,-0.8990625)(1.8221875,-1.2990625)
\psline[linewidth=0.032cm,linecolor=color2626b](1.4421875,-0.9190625)(1.4421875,-1.3190625)
\psline[linewidth=0.032cm,linecolor=color2626b](1.5021875,-0.8990625)(1.5021875,-1.2990625)
\psline[linewidth=0.032cm,linecolor=color2626b](13.162188,-0.8990625)(13.162188,-1.2990625)
\psline[linewidth=0.032cm,linecolor=color2626b](13.222187,-0.8990625)(13.222187,-1.2990625)
\psline[linewidth=0.032cm,linecolor=color2626b](11.502188,-0.8790625)(11.502188,-1.2790625)
\psline[linewidth=0.032cm,linecolor=color2626b](11.562187,-0.8990625)(11.562187,-1.2990625)
\psline[linewidth=0.032cm,linecolor=color2626b](3.5621874,-0.8990625)(3.5621874,-1.2990625)
\psline[linewidth=0.032cm,linecolor=color2626b](3.6221876,-0.8990625)(3.6221876,-1.2990625)
\usefont{T1}{ptm}{m}{n}
\rput(0.5496875,-1.6390625){\small $\vdots$}
\psframe[linewidth=0.04,dimen=outer](6.2303233,-0.8990625)(1.4221874,-1.3190625)
\psframe[linewidth=0.04,dimen=outer](13.250323,-0.8790625)(8.442187,-1.2990625)
\psframe[linewidth=0.04,dimen=outer](13.230323,-0.2790625)(8.422188,-0.6990625)
\psframe[linewidth=0.04,dimen=outer](13.250323,0.3209375)(8.442187,-0.0990625)
\psframe[linewidth=0.04,dimen=outer](13.250323,0.9009375)(8.442187,0.4809375)
\psframe[linewidth=0.04,dimen=outer](6.2703233,0.9009375)(1.4621875,0.4809375)
\psframe[linewidth=0.04,dimen=outer](6.2503233,0.3209375)(1.4421875,-0.0990625)
\psframe[linewidth=0.04,dimen=outer](6.2303233,-0.2790625)(1.4221874,-0.6990625)
\end{pspicture}
}
  \vskip 0.5cm
\textit{Fig.\,6}.\quad The fractal structure of the ``singular''
set.
\end{center}
\textit{For the sake of simplicity of the drawing, the red area
which represents the singular set is thicker than it should be.
Note also that the effective singular set is not perfectly
balanced.}
\medskip
\begin{center}
\scalebox{1} 
{
\begin{pspicture}(0,-3.36)(14.915,3.36)
\definecolor{color852b}{rgb}{0.2,0.8,1.0}
\definecolor{color891b}{rgb}{1.0,0.0,0.4}
\psline[linewidth=0.02cm,linestyle=dotted,dotsep=0.16cm](3.1421876,2.47)(9.462188,2.43)
\psline[linewidth=0.02cm](3.3821876,1.47)(14.422188,1.41)
\psdots[dotsize=0.198,dotstyle=|](3.4221876,1.47)
\psdots[dotsize=0.198,dotstyle=|](14.422188,1.43)
\psline[linewidth=0.04cm](8.402187,1.97)(9.422188,1.97)
\psline[linewidth=0.02cm,linestyle=dotted,dotsep=0.16cm](8.422188,2.43)(8.422188,1.41)
\psline[linewidth=0.02cm,linestyle=dotted,dotsep=0.16cm](9.442187,2.39)(9.442187,1.41)
\psline[linewidth=0.02cm,arrowsize=0.093cm
2.0,arrowlength=1.4,arrowinset=0.4]{->}(3.1021874,-0.47)(3.1421876,3.35)
\psdots[dotsize=0.102,dotstyle=+](3.1221876,1.47)
\psdots[dotsize=0.102,dotstyle=+](3.1221876,2.47)
\psdots[dotsize=0.102,dotstyle=+](3.1221876,1.99)
\usefont{T1}{ptm}{m}{n}
\rput(2.8096876,1.43){\small $0$}
\usefont{T1}{ptm}{m}{n}
\rput(2.8296876,1.99){\small $1$}
\usefont{T1}{ptm}{m}{n}
\rput(2.8096876,2.43){\small $2$}
\usefont{T1}{ptm}{m}{n}
\rput(3.2696874,1.17){\small $0$}
\usefont{T1}{ptm}{m}{n}
\rput(14.609688,1.19){\small $1$}
\usefont{T1}{ptm}{m}{n}
\rput(8.939688,1.05){\small $\IM^1$}
\psdots[dotsize=0.2,dotstyle=|](8.422188,1.45)
\psdots[dotsize=0.2,dotstyle=|](9.442187,1.43)
\psline[linewidth=0.02cm,linestyle=dashed,dash=0.16cm
0.16cm](3.0821874,-0.69)(3.0821874,-1.33)
\psline[linewidth=0.02cm](3.1021874,-1.47)(3.1021874,-3.35)
\psline[linewidth=0.04cm](3.7021875,1.97)(4.2021875,1.97)
\psline[linewidth=0.02cm,linestyle=dotted,dotsep=0.16cm](3.6821876,1.95)(3.6821876,-2.53)
\psline[linewidth=0.02cm,linestyle=dotted,dotsep=0.16cm](4.1821876,1.97)(4.2221875,-2.51)
\psline[linewidth=0.02cm,linestyle=dotted,dotsep=0.16cm](4.3821874,2.45)(4.4221873,-2.51)
\psline[linewidth=0.02cm,linestyle=dotted,dotsep=0.16cm](3.4021876,1.45)(3.4021876,-2.49)
\psline[linewidth=0.02cm,linestyle=dotted,dotsep=0.16cm](13.582188,1.93)(13.582188,-2.55)
\psline[linewidth=0.02cm,linestyle=dotted,dotsep=0.16cm](14.062187,1.93)(14.102187,-2.55)
\psline[linewidth=0.02cm,linestyle=dotted,dotsep=0.16cm](14.402187,1.43)(14.422188,-2.53)
\psline[linewidth=0.02cm,linestyle=dotted,dotsep=0.16cm](13.402187,2.45)(13.402187,-2.51)
\psline[linewidth=0.04cm](13.622188,1.95)(14.122188,1.95)
\psline[linewidth=0.02cm,linestyle=dotted,dotsep=0.16cm](3.1821876,1.99)(13.662188,1.95)
\psdots[dotsize=0.12,dotstyle=+](3.0821874,-2.55)
\usefont{T1}{ptm}{m}{n}
\rput(1.9096875,-2.53){\small $\propto -M_n/M_{n-1}^2$}
\psframe[linewidth=0.02,dimen=outer,fillstyle=solid,fillcolor=color852b](8.402187,2.55)(4.3821874,2.33)
\psframe[linewidth=0.02,dimen=outer,fillstyle=solid,fillcolor=color852b](13.442187,2.53)(9.402187,2.33)
\psframe[linewidth=0.02,dimen=outer,fillstyle=solid,fillcolor=color891b](14.422188,-2.29)(14.302188,-2.93)
\psframe[linewidth=0.02,dimen=outer,fillstyle=solid,fillcolor=color891b](13.482187,-2.29)(13.402187,-2.85)
\psdots[dotsize=0.12,dotstyle=+](3.0821874,-2.55)
\psline[linewidth=0.02cm,fillcolor=color891b,linestyle=dotted,dotsep=0.16cm](4.4421873,-2.55)(13.382188,-2.55)
\psdots[dotsize=0.2,dotstyle=|](4.4221873,1.47)
\psdots[dotsize=0.2,dotstyle=|](13.402187,1.41)
\usefont{T1}{ptm}{m}{n}
\rput(4.6396875,1.17){\small $\frac{1}{M_1}$}
\usefont{T1}{ptm}{m}{n}
\rput(12.869687,1.13){\small $\frac{M_1-1}{M_1}$}
\psframe[linewidth=0.02,dimen=outer,fillstyle=solid,fillcolor=color891b](3.5221875,-2.39)(3.4221876,-2.97)
\psframe[linewidth=0.02,dimen=outer,fillstyle=solid,fillcolor=color891b](14.202188,-2.33)(14.122188,-2.93)
\psframe[linewidth=0.02,dimen=outer,fillstyle=solid,fillcolor=color891b](13.642187,-2.29)(13.562187,-2.87)
\psframe[linewidth=0.02,dimen=outer,fillstyle=solid,fillcolor=color891b](4.4021873,-2.39)(4.3221874,-3.01)
\psframe[linewidth=0.02,dimen=outer,fillstyle=solid,fillcolor=color891b](4.2621875,-2.41)(4.1821876,-2.99)
\psframe[linewidth=0.02,dimen=outer,fillstyle=solid,fillcolor=color891b](3.7021875,-2.39)(3.6221876,-2.95)
\psline[linewidth=0.04cm,fillcolor=color891b](3.4821875,1.99)(3.6021874,1.97)
\psline[linewidth=0.04cm,fillcolor=color891b](4.2621875,1.97)(4.3421874,1.97)
\psline[linewidth=0.04cm,fillcolor=color891b](13.462188,1.95)(13.542188,1.95)
\psline[linewidth=0.04cm,fillcolor=color891b](14.222187,1.95)(14.302188,1.95)
\end{pspicture}
}
 \vskip 0.5cm
\textit{Fig.\,7}.\quad Shape of the quasi-cost
$\varphi^n+\psi^n\circ T_{\alpha_n}^{(\tau_n)}.$
\end{center}
\textit{The strips on this graphic representation symbolize the
oscillations of the function $\varphi^n+\psi^n\circ
T_{\alpha_n}^{(\tau_n)}$. On the``singular'' set, this finction
achieves values of order $-M_n/M_{n-1}^2.$ Of course, the
effective singular set is much more fragmented than it appears on
this figure.}
\medskip

We start with a ``good'' interval $I_{k_1,\dots ,k_{n-1}}$,
i.e.~$(k_1,\dots ,k_{n-1})\in J_{n-1}^g$ and simply write $\tau$
for $\tau_{n-1}|_{I_{k_1,\ldots , k_{n-1}}}.$ If $\tau >0$, define
$J^{k_1,\ldots ,k_{n-1},c}$, where $c$ stands for ``change'', as
$\{m_n -\tau+1,\linebreak \ldots ,m_n\}.$ This set consists of
those indices $k_n$ such that the interval $I_{k_1,\dots ,k_n}$ is
not mapped into $T^{(\tau_{n-1})}_{\alpha_{n-1}} (I_{k_1,\dots
,k_{n-1}})$  under $T^{(\tau_{n-1})}_{\alpha_n}$. If $\tau <0$, we
define $J^{k_1,\dots ,k_{n-1},c}$ as $\{1,\ldots ,|\tau|\}.$ The
complement $\{1,\ldots ,m_n\}\backslash J^{k_1,\dots ,k_{n-1},c}$
is denoted by $J^{k_1,\dots ,k_{n-1},u}$, where $u$ stands for
``unchanged''.

Define $\tau_n:=\tau_{n-1}=\tau$ on the intervals $I_{k_1,\dots
,k_{n-1},k_n}$, for $k_n\in J^{k_1,\dots ,k_{n-1},u}.$ For $x$ in
one of those intervals we have by \eqref{B1}, \eqref{B1a} and
\eqref{istep} that
$$
\phi^n(x)-\phi^n(T^{(\tau_n)}_{\alpha_n}(x))=\phi^{n-1}(x)-\phi^{n-1}(T^{(\tau_{n-1})}_{\alpha_{n-1}}(x)),
$$
which yields \eqref{C1} with $n-1$ replaced by $n$.

On the remaining intervals $I_{k_1,\dots ,k_n}$ with $k_n\in J^{k_1,\dots ,k_{n-1},c}$ we define $\tau_n$ such that it takes constant values in $\{-M_n+1,\ldots ,
M_n-1\}$ on each of these intervals, such that \eqref{B1} (resp.~\eqref{B1a}) is satisfied, and such that these intervals $I_{k_1,\dots ,k_n}$ are mapped onto the
``remaining gaps'' in $T^{(\tau_{n-1})}_{\alpha_{n-1}}(I_{k_1,\dots ,k_{n-1}}).$

The crucial observation is that the intervals $I_{k_1,\dots
,k_{n-1},k_n}$ where we have $\tau_n\ne \tau_{n-1}$, i.e.~where
$k_n\in J^{k_1, \ldots , k_{n-1},c},$ are all on the ``boundary''
of $I_{k_1,\dots ,k_{n-1}}$: they are the $|\tau|$ many intervals
on the left or right end of $I_{k_1,\dots ,k_{n-1}},$ depending on
the sign of $\tau.$ Similarly, the ``remaining gaps'' in
$T^{(\tau_{n-1})}_{\alpha_{n-1}} (I_{k_1,\dots ,k_{n-1}})$ are the
$|\tau|$ many intervals on the opposite end of
$T^{(\tau_{n-1})}_{\alpha_{n-1}} (I_{k_1,\dots ,k_{n-1}}).$ Hence
we may apply assertion (iii) of Lemma \ref{res-1} to conclude that
$$
|\phi^n(x)-\phi^n (T^{(\tau_{n})}_{\alpha_n} (x))|\le c(M_1,\dots ,M_{n-1}),
$$
for those $x\in I_{k_1,\dots ,k_{n-1}}$ where $\tau_n(x)\ne \tau_{n-1}(x).$
Summing over all ``good intervals'' $I_{k_1,\dots ,k_{n-1}}$, where $(k_1,\ldots ,k_{n-1})\in J^g_{n-1},$ we conclude that the contribution
to \eqref{I4}, with $n-1$ replaced by $n$, is controlled by
the following factors:
$M_{n-1}$, which is a bound for the number of elements in $J^g_{n-1}$, times $M_{n-1}$, which is a bound for $|\tau|$, times $\tfrac{1}{M_n}$, which is the
length of the
intervals $I_{k_1,\dots ,k_{n}}$, times the above found constant $c(M_1,\ldots ,M_{n-1}).$ In total, this implies the estimate \eqref{I4}, with $n-1$
replaced by $n$.

\medskip

We now turn to item (iv), i.e.~to the ``singular'' indices: fix
$k_1,\dots ,k_{n-1}\in J^s_{n-1}$ and let $\Delta\phi$ denote the
constant
$$
\Delta\phi :=
\phi^{n-1}(T^{(\tau_{n-1})}_{\alpha_{n-1}}(x))-\phi^{n-1}(x),
\quad x\in I_{k_1,\dots ,k_{n-1}},
$$
and again $\tau$ the constant ${\tau_{n-1}}_{|{I_{k_1,\dots
,k_{n-1}}}},$ so that $0 \le \Delta\phi\le |\tau|<M_{n-1}.$
\\
Similarly as for the case $n=2$ define
$$
J^{k_1,\dots ,k_{n-1},g,l}=\{k^l_n,k^l_n+1\dots
,\tfrac{m_n-1}{2}\}, \ \ J^{k_1,\dots
,k_{n-1},g,r}=\{\tfrac{m_n+1}{2},\dots , k^r_n\}.
$$
Here $k^r_n$ is the largest number such that, for the orbit
$(T^i_{\alpha_n}(x))_{i=\tau}^{\tau +\Delta\phi M_{n-1}-1}$ and
for $x\in I_{k_1,\dots , k_{n-1},k^r_n},$ all its members lie in
the right half of the respective intervals $I_{k'_1,\dots
,k'_{n-1}}.$ In fact, we get as in the step $n=2$ that $k^r_n=
m_n-(\tau +\Delta\phi M_{n-1}).$
\\
Similarly $k^l_n$ is the smallest number such that, for the orbit
$(T^i_{\alpha_n}(x))_{i=\tau}^{\tau -\Delta\phi M_{n-1}+1}$ and
for $x\in I_{k_1,\dots , k_{n-1},k^l_n},$ all its members are in
the left half of the respective intervals $I_{k'_1,\dots
,k'_{n-1}}.$ We get $k^l_n=\tau -\Delta\phi M_{n-1}+1.$
\\
Now we define $\tau_n$ as
$$\tau_n(x) =\tau+\Delta\phi M_{n-1}, \quad \mbox{for} \ x\in I_{k_1,\dots ,k_{n-1},k_n}, k_n\in J^{k_1,\dots ,k_{n-1},g,r},$$
and
$$
\tau_n (x)=\tau -\Delta\phi M_{n-1}, \quad \mbox{for} \ x\in I_{k_1,\dots ,k_{n-1},k_n},k_n\in J^{k_1,\dots ,k_{n-1},g,l}.
$$
Similarly as in \eqref{K17} at step $n=2$, we get for $k_n\in
J^{k_1,\dots ,k_{n-1},g}:=J^{k_1,\dots ,k_{n-1},g,l} \cup
J^{k_1,\dots ,k_{n-1},g,r},$ and $x\in I_{k_1,\dots ,k_{n-1},k_n}$
that
\begin{align*}
\begin{split}
\phi^n(x)-\phi^n & (T^{(\tau_n)}_{\alpha_n}(x))\\
    & =[\phi^n(x)-\phi^n(T^{(\tau_{n-1})}_{\alpha_n}(x))]
        +[\phi^n(T^{(\tau_{n-1})}_{\alpha_n}(x))-\phi^n(T^{(\tau_n)}_{\alpha_n}(x))] \\
    & =[\phi^{n-1}(x)-\phi^{n-1}( T^{(\tau_{n-1})}_{\alpha_{n-1}}(x))]
        +[\phi^n(T^{(\tau_{n-1})}_{\alpha_n}(x))-\phi^n(T^{(\tau_n)}_{\alpha_n}(x))] \\
& =-\Delta\phi+\Delta\phi=0.
\end{split}
\end{align*}
We still have to deal with the ``singular'' indices
$$
J^{k_1,\dots ,k_{n-1},s}:=\{1,\dots ,m_n\} \setminus J^{k_1,\dots
,k_{n-1},g}=\{1,\dots ,k^l_n-1\} \cup \{k^r_n+1,\dots ,m_n\},
$$
which consists of $2\Delta\phi M_{n-1}$ many indices. This number
is bounded by $2M^2_{n-1}$ as $\Delta\phi\le |\tau|<M_{n-1}.$
These intervals have to be mapped onto the ``remaining gaps'' in
the interval $T^{(\tau_{n-1})}_{\alpha_{n-1}}(I_{k_1,\dots
,k_{n-1}}).$ Make the crucial observation that, while the
intervals $I_{k_1,\dots ,k_{n-1},k_n},$ for $k_n\in J^{k_1,\dots
,k_{n-1},s}$, are at the boundary of $I_{k_1,\dots ,k_{n-1}}$, the
``remaining gaps'' are in the middle of the interval
$T^{(\tau_{n-1})}_{\alpha_{n-1}}(I_{k_1,\dots ,k_{n-1}}).$ This
fact is analogous to the situation for $n=1$ and $n=2.$

Now define $\tau_n$ on the intervals $I_{k_1,\dots ,k_{n-1},k_n}$
for $k_n\in J^{k_1,\dots ,k_{n-1},s}$, in such a way that
$T^{(\tau_n)}_{\alpha_n}$ maps these intervals onto the
``remaining gaps'' in
$T^{(\tau_{n-1})}_{\alpha_{n-1}}(I_{k_1,\dots ,k_{n-1}})$ and such
that $\tau_n$ is constant on each of these intervals, takes values
in $\{-M_n+1,\ldots ,M_n-1\}$ and such that \eqref{B1}
(resp.~\eqref{B1a}) is satisfied with $n-1$ replaced by $n$.
Applying Lemma \ref{res-1}, assertion (ii) as well as
$2(M_{n-1}+1)|\tau|$ many times assertion (i) we obtain, for $x\in
I_{k_1,\dots ,k_{n-1},k_n}$ and $k_n\in J^{k_1,\dots ,k_{n-1},s},$
$$
\phi^n(x)-\phi^n (T^{(\tau_n)}_{\alpha_n}(x))\le
-\tfrac{m_n}{2M_{n-1}}+c(M_1,\dots ,M_{n-1}).
$$
Assuming that $m_n$ is sufficiently large as compared to $M_{n-1}$ we have that the right hand side is negative.
\\
Keeping in mind that there are $2\Delta\phi M_{n-1}$ many indices
in $J^{k_1,\ldots ,k_{n-1},s}$, we may estimate the ``singular
mass'' on the interval $I_{k_1,\dots ,k_{n-1}}$ by
\begin{align}\label{I12}
\begin{split}
\sum_{k_n\in J^{k_1,\dots ,k_{n-1},s}} \int_{I_{k_1,\dots
,k_{n-1},k_n}} & [\phi^n(x)-\phi^n(T^{(\tau_n)}_{\alpha_n}(x))]\,
dx
\\ & \le 2\Delta\phi M_{n-1}[-\tfrac{m_n}{2M_{n-1}}+c(M_1,\dots ,M_{n-1})] \ \ \tfrac{1}{M_n}
\\ & =-\tfrac{\Delta\phi}{M_{n-1}} \ \ [1-\tfrac{c(M_1,\dots ,M_{n-1})}{m_n}].
\end{split}
\end{align}
We have by the inductive hypothesis that
$$
    \sum_{k_1,\dots ,k_{n-1}\in J^s_{n-1}} \int_{I_{k_1,\dots,k_{n-1}}}
    [\phi^{n-1}(x)-\phi^{n-1}(T^{(\tau_{n-1})}_{\alpha_n}(x))]\, dx
$$
$$\le -1+\tfrac{3}{m_1}+\tfrac{c(M_1)}{m_2}+\dots +\tfrac{c(M_1,\dots ,M_{n-2})}{m_{n-1}},$$
or, writing now $\Delta\phi_{k_1,\dots ,k_{n-1}}$ for the above
value of $\Delta\phi$ on the interval $I_{k_1,\dots ,k_{n-1}}$,
$$
\tfrac{1}{M_{n-1}} \sum\limits_{k_1,\dots ,k_{n-1}\in J^s_{n-1}} \Delta\phi_{k_1,\dots ,k_{n-1}}\le -1+\tfrac{3}{m_1}+\tfrac{c(M_1)}{m_2}+\dots +
\tfrac{c(M_1,\dots ,M_{n-2})}{m_{n-1}}.
$$
Letting $J^s_n:=\bigcup\limits_{k_1,\dots ,k_{n-1}\in
J^s_{n-1}}\{(k_1,\ldots ,k_{n-1},k_n):k_n\in J^{k_1,\dots
,k_{n-1},s}\}$ we obtain from \eqref{I12}
\begin{align*}
\begin{split}
\sum\limits_{k_1,\dots ,k_n\in J^s_n} \int_{I_{k_1,\dots ,k_n}}&[\phi^n(x)-\phi^n(T^{(\tau_n)}_{\alpha_n}(x))]\, dx \\
&\le (-1+\tfrac{3}{m-1}+\ldots +\tfrac{c(M_1,\ldots ,M_{n-2})}{m_{n-1}})(1-\tfrac{c(M_1,\ldots ,M_{n-1})}{m_n}) \\
&= -1+\tfrac{3}{m_1} +\dots +\tfrac{c(M_1,\dots
,M_{n-2})}{m_{n-1}}+\tfrac{c(M_1,\dots ,M_{n-1})}{m_n}.
\hspace{1cm}
\end{split}
\end{align*}
where we may have increased the constant $c(1,\ldots ,M_{n-1})$ in the last line. This concludes the inductive step.

\bigskip

\begin{proof}[Construction of the Example:]

Let $\alpha =\lim_{n\to\i}\alpha_n$ so that $T_\alpha =\lim_{n\to\i} T_{\alpha_n}$ is the shift by the irrational number $\alpha$.

The sequence $(\tau_n)^\i_{n=1}$ of functions $\tau_n:[0,1)\to \Z$
converges, by \eqref{C1}, almost surely to a $\Z$-valued function
$\tau=\lim_{n\to\i} \tau_n$. Hence the maps
$(T^{(\tau_n)}_{\alpha_n})^\i_{n=1}$ converge almost surely to a
map \begin{equation*} T_\alpha^{(\tau)}: \left\{
    \begin{array}{rcl}
      [0,1) & \to & [0,1) \\
      x & \mapsto & T_\alpha^{(\tau)} (x)
=T_\alpha^{\tau(x)}(x). \\
    \end{array}
    \right.
 \end{equation*}
Using the fact that each $T_{\alpha_n}^{(\tau_n)}$ is  a measure
preserving almost sure bijection on $[0,1)$, it is straightforward
to check that $T^{(\tau)}_\alpha$ is so too.

Letting $\Gamma_\tau=\{(x, T^{(\tau)}_\alpha (x)), \ x\in[0,1)\}$
in analogy to the notations $\Gamma_0=\{(x,x), \ x\in[0,1)\}$ and
$\Gamma_1=\{(x,T_\alpha (x)), \ x\in[0,1)\}$, we define
$$c(x,y)=
\begin{cases}
h_+(x,y), & \mbox{if } (x,y)\in \Gamma_0 \cup\Gamma_1\cup\Gamma_\tau,\\
\infty & \mbox{otherwise,}
\end{cases}
$$ where $h$ is defined in \eqref{dxy} above.
From this definition we deduce the almost sure identity, for $\tau(x) >0,$
\begin{align}\label{E28}
\begin{split}
h(x,T^{(\tau)}_\alpha (x)) & = \#\{i\in \{0,\ldots ,\tau(x)-1\}:T^i_\alpha (x)\in[0,\tfrac12)\}\\
& \quad ~ -\#\{i\in\{0,\ldots ,\tau (x)-1\}:T^i_\alpha (x)\in [\tfrac12 ,1)\}+1 \\
& =\lim\limits_{n\to\i} [\phi^{n}(x)-\phi^{n}
(T^{(\tau_n)}_{\alpha_n}(x))]+1,
\end{split}
\end{align}
a similar formula holding true for $\tau(x)<0.$

As regards the Borel functions $(\phi_n,\psi_n)^\i_{n=1}$ announced in \eqref{iii,a}, \eqref{iii,b} and \eqref{iii,c} above, we need to slightly modify the
functions $(\phi^{n},\psi^{n})^\i_{n=1}$
constructed in the above induction to make sure that they satisfy the inequality
\begin{align}\label{29}
\varphi_n(x)+\psi_n(y)\leq c(x,y), \quad \mbox{for }x\in X,y\in Y.
\end{align}
As $c=\i$ outside of $\Gamma_0 \cup\Gamma_1\cup\Gamma_\tau$ it is
sufficient to make sure that the following inequalities hold true
almost surely, for $x\in[0,1):$
\begin{align*}
& (0) \quad \phi_n(x)+\psi_n(x) \le c(x,x)=1,
\\
& (1) \quad \phi_n(x)+\psi_n(T_{\alpha}(x))\le c(x,T_\alpha(x))=
\begin{cases}
2, \ \ \mbox{for} \ x\in[0,\tfrac12), \\
0, \ \ \mbox{for} \ x\in[\tfrac12,1),
\end{cases}
\\
& (\tau)  \ \ \ \phi_n(x)+\psi_n (T^{(\tau)}_\alpha(x))\le c(x,T^{(\tau)}_\alpha(x)).
\end{align*}
The above constructed $(\phi^{n},\psi^{n})^\i_{n=1}$ only satisfy
condition $(0).$ We still have to pass from $\phi^n$ to a smaller
function $\phi_n$ -- while leaving $\psi_n:=\psi^n$  unchanged --
to satisfy $(1)$ and $(\tau)$ too. Let
\begin{align}\label{D1}
\begin{split}
\phi_n(x):=\phi^n(x) & -[\phi^n(x)+\psi^n(T_\alpha(x))-c(x,T_\alpha(x))]_+ \\
&-[\phi^n(x)+\psi^n(T_\alpha^{(\tau)}(x))-c(x,T^{(\tau)}_\alpha(x)]_+.
\end{split}
\end{align}
Clearly $\phi_n\le \phi^n$ and the functions $(\phi_n,\psi_n)$ satisfy the inequality \eqref{29}.

\medskip\noindent\textit{We have to show that the functions $\phi_n$ defined in
\eqref{D1} satisfy that $\phi^n-\phi_n$ is small in the norm of
$L^1(\mu)$, as $n\to\i,$ that is
\begin{equation}\label{eq-2}
    \lim_{n\to\i} \int_{[0,1)} (\phi^n(x)-\phi_n(x))\,dx =0,
\end{equation}
provided that $(m_n)^\i_{n=1}$ increases sufficiently fast to
infinity. }
\medskip

We may estimate the first correction term in \eqref{D1} by
\begin{align*}
\begin{split}
    [\phi^n(x)+\psi^n&(T_\alpha(x)) -c(x,T_\alpha(x))]_+\\
    &\le [\psi^n(T_\alpha(x))-\psi^n(T_{\alpha_n}(x))]_++[\phi^n(x)+\psi^n(T_{\alpha_n}(x))-c(x,T_\alpha(x))]_+.
\end{split}
\end{align*}
The second term above is dominated by $\mathbbm{1}_{\IM^n}$ which
is harmless as
$\|\mathbbm{1}_{\IM^n}\|_{L^1(\mu)}=\tfrac{1}{M_n}.$ As regards
the first term, note that $T_\alpha(x)\ominus
T_{\alpha_n}(x)=\alpha -\alpha_n = \sum^\i_{j=n+1} \tfrac{1}{M_j}$
which we may bound by $\tfrac{2}{M_{n+1}}$ by assuming that
$(m_n)^\i_{n=1}$ increases sufficiently fast to infinity. As
$\psi^n$ is constant on each of the $M_n$ many intervals
$I_{k_1,\ldots ,k_n}$ we get
\begin{align*}
\mu\{ x\in[0,1): \psi^n(T_\alpha(x))\ne \psi^n (T_{\alpha_n}(x)\} \ \le \ M_n(\alpha -\alpha_n) <\tfrac{2}{m_{n+1}}.
\end{align*}
On this set we may estimate, using only the obvious bound $|\psi_n(x)|<M_n$, that
\begin{align*}
|\psi^n(T_\alpha(x))-\psi^n(T_{\alpha_n}(x))|\le 2M_n, \quad x\in[0,1),
\end{align*}
to obtain
\begin{align*}
\|\psi^n(T_\alpha(x))-\psi^n(T_{\alpha_n}(x))\|_{L^1(\mu)}<\tfrac{4M_n}{m_{n+1}}.
\end{align*}
Hence for $(m_n)^\i_{n=1}$ growing sufficiently fast to infinity, the first correction term in \eqref{D1} is also small in $L^1$-norm.
\vskip6pt

To estimate the second correction term in \eqref{D1} note that
\begin{align}\label{D4}
\phi^n(x)+\psi^n(T^{(\tau)}_\alpha(x))=\phi^n(x)+\psi^n(T^{(\tau_n)}_{\alpha_n}(x)), \quad \ \mbox{for} \ x\in[0,1).
\end{align}
Indeed, $T^{(\tau_n)}_{\alpha_n}$ induces a permutation between the intervals $I_{k_1,\ldots ,k_n}$ and, by assertion (i) preceding the formula \eqref{B1}, we have
that $T^{(\tau_{n+j})}_{\alpha_{n+j}}$ maps the intervals $I_{k_1,\ldots ,k_n}$ onto the intervals $T^{(\tau_n)}_{\alpha_n} (I_{k_1,\ldots ,k_n}),$
for each $j\geq 0.$ Noting that $\psi^n$ is
constant on each of the intervals $I_{k_1,\ldots ,k_n}$ we obtain \eqref{D4}, by letting $j$ tend to infinity.
\\
By \eqref{istep}, $\phi^n(x)+\psi^n (T^{(\tau_n)}_{\alpha_n}(x))$
is the number of visits to $L^n$ minus the number of visits to
$R^n$ plus one, of the orbit
$(T^j_{\alpha_n})^{\tau_n(x)-1}_{j=0}.$ Similarly, by \eqref{dxy},
 $h(x,T^{\tau(x)}_\alpha(x))$ is the number of visits to $L$ minus the number of visits to
$R$ plus one, of the orbit $(T^j_{\alpha})^{\tau(x)-1}_{j=0}.$ We
have to show that the positive part of the difference
\begin{align}\label{66}
f_n(x):= [\phi^n(x)+\psi^n(T^{(\tau_n)}_{\alpha_n}(x))-h_+(x,T^{(\tau)}_{\alpha}(x))]_+, \quad x\in[0,1),
\end{align}
is small in $L^1$-norm, as $n\to\i.$ To do so, we argue separately
on $\IM^1=[\tfrac12-\tfrac{1}{2M_1},\tfrac12 +\tfrac{1}{2M_1}],$
on the union of the ``good'' intervals at level $n:$
    $
G_n=\bigcup_{(k_1,\ldots ,k_n)\in J^g_n}   I_{k_1,\ldots ,k_n},
    $
and the union of the ``singular'' intervals at level $n,$
    $
S_n=\bigcup_{(k_1,\ldots ,k_n)\in J^s_n}   I_{k_1,\ldots ,k_n}.
    $
\begin{itemize}
    \item[-] For $x\in\IM^1,$ the correction term $f_n(x)$ in \eqref{66} simply
equals zero as $\tau_n(x)=\tau(x)=0.$

    \item[-] For $x\in S_n$, we have by \eqref{I5a} that
$\phi^n(x)+\psi^n(T^{\tau_n(x)}_{\alpha_n}(x))\le 1$ so that
$f_n(x)\le 1$ too; hence
$\lim_{n\to\i}\|f_n\mathbbm{1}_{S_n}\|_{L^1(\mu)}=0.$

    \item[-] For $x\in G_n$, we use
\begin{align*}
\begin{split}
f_n(x) & \le [\phi^n(x)+\psi^n(T^{(\tau_n)}_{\alpha_n}(x))-h(x,T^{\tau(x)}_{\alpha}(x))]_+ \\
& \le\sum\limits^\i_{k=n+1}
[(\phi^{k-1}(x)+\psi^{k-1}(T^{(\tau_{k-1})}_{\alpha_{k-1}}(x)))-(\phi^k(x)+\psi^k(T^{(\tau_k)}_{\alpha_k}(x)))]_+
\end{split}
\end{align*}
and \eqref{I4} to conclude that
\begin{align*}
\lim\limits_{n\to\i} \|f_n\mathbbm{1}_{G_n}\|_{L^1(\mu)}\le
\lim\limits_{n\to\i}\sum\limits^\i_{k=n+1} \tfrac{c(M_1,\ldots
,M_{k-1})}{m_{k}}=0.
\end{align*}
\end{itemize}
This proves \eqref{eq-2}.
\\
Hence \eqref{iii,a}, \eqref{iii,b} and \eqref{iii,c} are satisfied.

\medskip
As regards assertion \eqref{Seite10}, let us verify that $\pi_0$
and $\pi_1$ are optimal transport plans. Indeed, it follows from
\eqref{iii,a} and \eqref{iii,b} that the dual value of the present
transport problem is greater than or equal to one which implies
that $\langle c,\pi_0\rangle= \langle c,\pi_1\rangle =1$ is the
optimal primal value.

The fact that $\langle c,\pi_\tau\rangle >1$ should be rather
obvious to a reader who has made it up to this point of the
construction. It follows from rough estimates. The set
$\{[0,\tfrac12)\cap
\{\tau=-1\}\}\cup\{[\tfrac12,1)\cap\{\tau=1\}\}$ has measure
bigger than $1-\tfrac{3}{M_1}+\sum^\i_{i=2} \tfrac{c(M_1,\ldots
,M_{i-1})}{m_i}$, which is bigger than, say, $\tfrac34$, for
$(m_n)^\i_{n=1}$ tending sufficiently quick to infinity. As
$c(x,T^{(\tau)}_\alpha(x))$ equals $2$ on this set we get
$$\langle c,\pi_\tau\rangle \geq\tfrac32 >1.$$ A slightly more involved argument, whose verification is left to the energetic reader,
shows that, for $\varepsilon >0$,  we may choose $(m_n)^\i_{n=1}$ such that
\begin{align}\label{E1}
\langle h,\pi_\tau\rangle \geq 2-\varepsilon.
\end{align}

Finally, we show assertion (iv) at the beginning of this section (see \eqref{tag4}).
Let $\hh \in L^1(\pi)^{**}$ be a dual optimizer in the sense of  \cite[Theorem 4.2]{BeLS09a}.
We know from this theorem that there is a sequence $(\phi_n,\psi_n)^\i_{n=1}$ of bounded Borel functions\footnote {The $(\phi_n ,\psi_n)$ need not be the
same as the special sequence constructed above; still we find it convenient to use the same notation.} such that
\begin{align}\label{alpha}
& (\alpha) \lim\limits_{n\to\i}
\|[\phi_n\oplus\psi_n-c]_+\|_{L^1(\pi)}=0 \quad
\\
\label{beta} & (\beta) \lim\limits_{n\to\i}(\int_X \phi_n(x)\,
d\mu(x)+\int_Y\psi_n(y) ~ d\nu(y))=1,
\\
\label{gamma} & (\gamma) \lim\limits_{n\to\i}
\phi_n\oplus\psi_n=\hh^r, \quad \pi\mbox{-a.s.},
\\
\label{delta}
&
(\delta)
\mbox{ $\hat h$ is a $\sigma (L^1(\pi)^{**}, L^\i(\pi))$ cluster point of $(\phi_n\oplus\psi_n)_{n=1}^{\infty}$.}
\end{align}
Here $\hh=\hh^r+\hh^s$ is the decomposition of $\hh\in L^1(\pi)^{**}$ into its regular part $\hh^r\in L^1(\pi)$ and into its purely singular part
$\hh^s\in L^1(\pi)^{**}.$
\\
We shall show that $\hh^r$ equals $h,$ $\pi$-almost surely. Indeed
by assertions \eqref{alpha} and \eqref{beta} above we have that,
for $x\in[0,1),$
$$\lim\limits_{n\to\i}(\phi_n(x)+\psi_n(x))=c(x,x)=h(x,x)=1,$$
and
\begin{align*}
\lim\limits_{n\to\i}(\phi_n(x)+\psi_n(T_\alpha(x)))=c(x,T_\alpha(x))=h(x,T_{\alpha}(x))=
\begin{cases}
2, \  \mbox{for} \ x\in [0,\tfrac12), \\
0, \  \mbox{for} \ x\in [\tfrac12, 1),
\end{cases}
\end{align*}
the limit holding true in $L^1([0,1],\mu)$ as well as for $\mu$-a.e.~$x\in [0,1)$, possibly after passing to a subsequence. As in the discussion following
\cite[Theorem 4.2]{BeLS09a} this implies that, for each fixed $i\in\Z$,
$$\lim\limits_{n\to\i} (\phi_n(x)+\psi_n(T^i_\alpha(x)))=h(x, T^i_\alpha(x)),\quad i\in\Z,$$
the limit again holding true in $L^1(\mu)$ and $\mu$-a.s., after
possibly passing to a diagonal subsequence. Whence, we obtain with
\eqref{gamma} that
\begin{align*}
\lim\limits_{n\to\i}(\phi_n(x)+\psi_n(T^{(\tau)}_\alpha(x)))=h(x,T^{(\tau)}_\alpha(x))=\hh^r(x,T^{(\tau)}_\alpha(x)),
\end{align*}
convergence now holding true for $\mu$-a.e.\ $x\in [0,1]$.
\\
As $x\to T^{(\tau)}_\alpha(x)$ is a measure preserving bijection we get
\begin{align*}
\int_{[0,1)}[\phi_n(x)+\psi_n(T^{(\tau)}_\alpha(x))]\,dx =
\int_{[0,1)} (\phi_n(x)+\psi_n(x))\, dx=1,
\end{align*}
so that, using \eqref{E1} we get
\begin{eqnarray*}
   &&
   \lim_{n\to\i}\int_{[0,1)}[\phi_n(x)+\psi_n(T^{(\tau)}_\alpha(x))]
        \mathbbm{1}_{\{\phi_n(x)+\psi_n(T^{(\tau)}_\alpha(x))<h(x,T^{(\tau)}_\alpha(x))\}}(x) \,dx\\
  &=&
  1-\lim\limits_{n\to\i}\int_{[0,1)}[\phi_n(x)+\psi_n(T^{(\tau)}_\alpha(x))]
    \mathbbm{1}_{\{\phi_n(x)+\psi_n(T^{(\tau)}_\alpha(x))\geq h(x,T^{(\tau)}_\alpha(x))\}} (x)\,dx\\
  &=& 1-\langle h,\pi_\tau\rangle\\
  &<& 0.
\end{eqnarray*}
From
$\lim_{n\to\i}\mu\big\{x:\phi_n(x)+\psi_n(T^{(\tau)}_\alpha(x))<
h(x,T^{(\tau)}_\alpha(x))\big\}=0$ we conclude that each
$\sigma^*$-cluster point of
$([\phi_n(\cdot)+\psi_n(T^{(\tau)}_\alpha(\cdot))]_-)^\i_{n=1}$ is
a purely singular element of $L^1(\pi)^{**}$ of norm equal to
$\langle h,\pi_\tau\rangle -1.$

Finally, we still have to specify the prime numbers $(m_n)^\i_{n=1}$ in the above induction. It is now clear what we need: apart from satisfying the conditions of
Lemma 3.1 as well as the
requirements whenever we wrote ``{\it for $m_n$ tending sufficiently fast to infinity}'', we choose the $(m_n)^\i_{n=1}$ inductively such that in \eqref{C1} we have
$\tfrac{M_{n-2}}{m_{n-1}}<2^{-n}$, that in \eqref{I4} we have $\tfrac{c(M_1,\ldots ,M_{n-2})}{m_{n-1}}<2^{-n}$ and in \eqref{I5b} we have
$\tfrac{3}{m_1} <\tfrac14$ as well as again
$\tfrac{c(M_1,\ldots ,M_{n-2})}{m_{n-1}}<2^{-n}$.

Hence we have shown all the assertions (i)-(iv) of Example 3.1 and the construction of the example is complete.
\end{proof}


\section{A Relaxation of the Dual Problem}

As in \cite[Remark 3.4]{BeLS09a}, for a given cost function $c:X\times Y\to[0,\i],$ we consider the family of pairs of functions
$$\Psi^{\textrm{rel}}(\mu,\nu)= \left\{
\begin{array}{lllll}
(\phi,\psi):\phi,\psi \ \mbox{Borel, integrable and}\\
\phi(x)+\psi(y)\le c(x,y), \ \pi\mbox{-a.s}, \\
\mbox{for each finite transport plan} \ \pi\in\Pi (\mu,\nu,c)
\end{array} \right\}$$
and define the relaxed value of the dual problem as
\begin{align}\label{TagAuf33}
\Drel =\sup\Big\{\int_X \phi \ d\mu +\int_Y \psi \ d\nu
:(\phi,\psi)\in \Psi^{\mathrm{rel}} (\mu,\nu)\Big\}.
\end{align}
Using the notation of \cite{BeLS09a} it is obvious that $D\le
\Drel$ and it is straightforward to verify that the trivial
duality inequality
$\Drel\le P$ still is satisfied. 
One might conjecture -- and the present authors did so for some
time -- that $\Drel= P$ holds true in full generality, i.e.~for
arbitrary Borel measurable cost functions $c:X\times Y\to[0,\i],$
defined on the product of two polish spaces $X$ and $Y$. In this
section we construct a counterexample showing that this is not the
case, i.e.\ it may happen that we have a duality gap $P-\Drel >0$.
The example will be a variant of the example in the previous
section, i.e.\ the $(n+1)$'th variation of \cite[Example
3.2]{AmPr03}.

In section 3 we constructed a measure preserving bijection $T^{(\tau)}_{\alpha} :[0,1)\to [0,1)$ having certain properties; we now shall construct
a sequence $(T^{(\tau_n)}_{\alpha})^\i_{n=0}$ of such maps and consider as cost function the restriction of $h_+$, where $h$ is defined in \eqref{dxy}
to the graphs $(\Gamma_n)^\i_{n=0}$
of the maps $(T^{(\tau_n)}_\alpha)^\i_{n=0}$. This sequence also ``builds up a singular mass'', which now is positive as opposed to the negative
singular mass in the previous section, but it does so in a different way. We resume the properties of these maps which we shall construct in the following
proposition.
\begin{proposition}\label{P4.1}
With the notation of section 3 there is an irrational $\alpha\in[0,1)$ and a sequence $(\tau_n)^\i_{n=0}$ of maps $\tau_n :[0,1)\to\Z$, with
$\tau_0=0$ and $\tau_1=1$, such that the transformations
$T^{(\tau_n)}_{\alpha} :[0,1)\to [0,1)$, defined by
$$T^{(\tau_n)}_{\alpha}(x)=T^{\tau_n(x)}_{\alpha}(x), \qquad x\in [0,1),$$
have the following properties.
\begin{enumerate}
\item
Each $\tau_n$ is constant on a countable collection of disjoint, half open intervals in $[0,1)$ whose union has full measure. For $n\geq 0$, the map
$T^{(\tau_n)}_{\alpha}$ defines  a measure
preserving almost sure bijection of $([0,1),\mu)$ onto itself, where $\mu =\nu$ denotes Lebesgue measure on $[0,1).$
We have, for each $n\geq 0$,
\begin{align}\label{P32}
\int_{[0,1)} h(x,T^{(\tau_n)}_\alpha(x)) \, dx=1.
\end{align}
\item
The function
$$f_n(x):=h(x, T^{(\tau_n)}_{\alpha}(x)), \qquad x\in[0,1),$$
where $h$ is defined in \eqref{dxy}, satisfies
\begin{align}\label{L3}
\| f_n-g_n \|_{L^1(\mu)} < 2^{-n}
\end{align}
where $g_n$ is a Borel function on $[0,1)$ such that
\begin{align}\label{L3a}
\mu\{g_n=0\}=1-\eta_n ,\qquad \mu\{g_n=\tfrac{1-\eta_n}{\eta_n}\}=\eta_n
\end{align}
for some sequence $(\eta_n)^\i_{n=1}$ tending to zero.
\item
There is a sequence $(\phi_n,\psi_n)^\i_{n=1}$ of bounded Borel functions such that, for every fixed $n\in\N$,
$$\lim\limits_{m\to\i}\| h(x,T^{(\tau_n)}_{\alpha}(x)) -[\phi_m(x)+\psi_m(T^{(\tau_n)}_{\alpha}(x))]\|_{L^1(\mu)}=0,$$
and
$$\lim\limits_{n\to\i} \Big[\int_{[0,1)} \phi_n(x) \,dx +\int_{[0,1)} \psi_n(y) \,dy\Big]=1.$$
\item The sequence $(T^{(\tau_n)}_\alpha)^\i_{n=1}$ converges to
the identity map in the following sense:
\begin{align}\label{L4}
\delta(x,T^{(\tau_n)}_\alpha(x)) <2^{-n},\qquad x\in[0,1), \ n\geq 1,
\end{align}
where $\delta(\cdot ,\cdot)$ denotes the Riemannian metric on $\T=[0,1)$.
\end{enumerate}
\end{proposition}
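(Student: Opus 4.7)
The plan is to build the sequence $(\tau_n)_{n\ge 0}$ by carrying along each stage of an inductive construction directly modelled on Section 3, rather than only retaining its limit. Fix $\alpha=\sum_{j\ge 1} 1/M_j$ with primes $m_j$ growing sufficiently fast (Lemma \ref{RelPrimeLemma}) and with $m_1\ge 5$, so that $\alpha<1/2$. Set $\tau_0\equiv 0$ and $\tau_1\equiv 1$. These base cases make $T^{(\tau_0)}_\alpha$ the identity and $T^{(\tau_1)}_\alpha$ the shift $T_\alpha$; both are measure-preserving bijections, the condition (iv) holds trivially for $n=0$ and because $\alpha<2^{-1}$ for $n=1$, and $\int_{[0,1)} h(x,T^{(\tau_k)}_\alpha x)\,dx=1$ for $k=0,1$ by direct computation from \eqref{BB1} and \eqref{dxy}.

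For $n\ge 2$ I partition $[0,1)$ into the intervals $I_{k_1,\dots,k_n}$ of length $1/M_n$ and define $\tau_n$ to be constant on each. The intervals are split into a ``good'' collection of total measure $1-\eta_n$, with $\eta_n<2^{-n}$, and a complementary ``singular'' collection. On a good interval the constant value $k=\tau_n|_{I_{k_1,\dots,k_n}}$ is chosen to satisfy three requirements simultaneously: (a) the distance from $k\alpha$ to the nearest integer is less than $2^{-n}$, which yields assertion (iv); (b) the orbit $(T_\alpha^i x)_{i=0}^{k-1}$ visits $[\tfrac12,1)$ exactly one time more often than $[0,\tfrac12)$, so that $\varrho_k(x)=0$ by \eqref{BB1} and $f_n\equiv 0$ on the good set; (c) the prescription globally yields a permutation of the refined partition, so that $T_\alpha^{(\tau_n)}$ is a measure-preserving bijection. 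Condition (a) forces $k$ to be near a multiple of $M_n$ up to the error $\alpha-\alpha_n=O(1/M_{n+1})$; with $k$ of this size (b) is a Denjoy--Koksma-type adjustment feasible because, by the orbit-count estimates in Lemma \ref{res-1}, the difference of visits to the two halves stays within $c(M_1,\dots,M_{n-1})$ of zero, while a unit shift of $k$ changes $\varrho_k$ by $\pm 1$; (c) is enforced by the same inductive matching used in Section 3. On the singular intervals I let $\tau_n$ take a single large value for which $\varrho_{\tau_n(x)}(x)\approx (1-\eta_n)/\eta_n$, restoring $\int f_n=1$ and providing the mass concentration demanded by (ii).

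For assertion (iii), bounded Borel functions $(\phi_m,\psi_m)$ with $\phi_m\oplus\psi_m\le h_+$ and $\phi_m\oplus\psi_m\to h$ in $L^1(\pi_{\tau_n})$ as $m\to\infty$ for every fixed $n$ are produced by the construction from Section 3 applied diagonally across the countable family $\{\pi_{\tau_n}\}_{n\ge 0}$. The normalisation $\int\phi_n\,d\mu+\int\psi_n\,d\nu=\langle\phi_n\oplus\psi_n,\pi_0\rangle\to\int h\,d\pi_0=1$ then follows from the first half of (iii) specialised to $n=0$ combined with $\int h(x,x)\,dx=1$.

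The principal obstacle is the simultaneous realisation of (a), (b), (c) on enough good intervals to keep $\eta_n<2^{-n}$ while maintaining inductive compatibility across levels. Condition (a) confines $\tau_n$ to a sparse arithmetic family, (b) imposes a discrepancy condition on orbit counts within that family, and (c) is a global combinatorial matching requirement; reconciling all three and absorbing every error into $2^{-n}$ by a fast enough choice of the primes $m_n$ is where the bulk of the technical work lies, in direct analogy with, but opposite in sign to, the singular mass construction of Section 3.
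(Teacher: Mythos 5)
Your high-level plan is recognizably the right one, and it is essentially the shape of the construction the paper uses: keep a whole sequence of Section-3-style shift maps rather than only a single limit, choose the values of each $\tau_n$ so that the ``quasi-cost'' $\phi^n+\psi^n\circ T^{(\tau_n)}_{\alpha}$ vanishes on a set of measure $1-\eta_n$ and concentrates a positive mass $\approx 1$ on the remaining set, and arrange for the displacement $\delta(x,T^{(\tau_n)}_\alpha(x))$ to shrink geometrically. Your observation that the construction is ``in direct analogy with, but opposite in sign to'' Section 3 is exactly the sign flip the paper makes at step $j=1$ (taking $\tau_{1,1}=-\tau_1$).

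However, there is a genuine gap in the mechanism you propose for bijectivity, which is where the paper's proof does its real work. You define $\tau_n$ to be constant on the finite partition $\{I_{k_1,\dots,k_n}\}$ at a single level $n$ and require both $\|\tau_n(x)\alpha\|<2^{-n}$ and that the shifts form ``a permutation of the refined partition.'' These two demands are mutually inconsistent for the irrational $\alpha$: shifting a rational-endpoint interval of length $1/M_n$ by the irrational amount $\tau_n(x)\alpha\ (\mathrm{mod}\ 1)$ produces an interval which does not coincide with any member of the level-$n$ partition, and there is no single finite collection of level-$n$ intervals that the images can tile exactly. The paper does not, in fact, construct $\tau_n$ on a finite partition at a single level. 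Instead it introduces a double-indexed family $\tau_{n,j}$ ($1\le n\le j$), works at each stage with the rational approximant $\alpha_j$ --- so that $T^{(\tau_{n,j})}_{\alpha_j}$ is a genuine permutation of the level-$j$ intervals and maps each $I_{k_1,\dots,k_{n-1}}$ onto itself, which automatically bounds the displacement without any separate arithmetic constraint on $\|\tau_n(x)\alpha\|$ --- and then takes $\tau_n=\lim_{j\to\infty}\tau_{n-1,j}$. The resulting $\tau_n$ is constant on a \emph{countable} collection of intervals whose union is full measure (precisely as stated in item (i)); it is not constant on the finite level-$n$ partition. Your proposal would need to be rewritten around this limiting scheme, or an equivalent one, before items (i) and (iv) could both be justified. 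The compatibility estimates \eqref{page41a}--\eqref{The111}, which guarantee that all the $\tau_{n,j}$ stabilise simultaneously as $j\to\infty$, are the technical core you gestured at but cannot avoid.
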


We postpone the proof of the proposition and first draw some
consequences. Suppose that $\alpha$ as well as
$(T^{(\tau_n)}_\alpha)^\i_{n=0}$ have been defined and satisfy the
assertions of Proposition \ref{P4.1}.

\begin{proposition}\label{L4.2}
Fix $M\geq 2$ and define the cost function $c_M:[0,1)\times [0,1)\to[0,\i]$ by
\begin{align*}
c_M(x,y)=
\begin{cases}
h_+(x,y), \ &\mbox{for} \ (x,y) \ \mbox{in the graph of}\ T^0_\alpha, T^1_\alpha, T^{(\tau_2)}_\alpha,T^{(\tau_3)}_\alpha,\dots ,T^{(\tau_M)}_\alpha, \\
\i, \ &\mbox{otherwise.} \\
\end{cases}
\end{align*}
For this cost function $c_M$ we find that the  primal value, denoted by $P^M$, as well as the dual value, denoted by $D^M$, of the Monge--Kantorovich problem
both are equal to 1.

In addition, there is $\beta=\beta(M)>0$, such that,
for every partial transport 
$$\sigma\in\Pipart(\mu,\nu):=\{\sigma:{\mathcal M}(X\times Y):p_X(\pi)\leq \mu, p_Y(\pi)\leq \nu\}$$
with
$$\|\sigma\|\geq\tfrac23 \ \mbox{and} \ \int_{X\times Y} c_M(x,y) \ d\sigma(x,y)\le\tfrac12,$$
there is no partial transport $\varrho\in\Pipart(\mu,\nu)$ with
$$\|\sigma+\varrho \|=1 \ \mbox{and} \ \sigma+\varrho
\in\Pi(\mu,\nu)$$ with the property that $\varrho$ is supported by
$$\Delta^\beta=\{(x,y)\in[0,1)^2: \delta (x,y)<\beta\}.$$
\end{proposition}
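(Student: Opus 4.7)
The plan is to prove the two assertions in turn. First, $P^M=D^M=1$: the upper bound $P^M\le 1$ is realized by the diagonal plan $\pi_0=(\mathrm{id},\mathrm{id})_\#\mu$, whose $c_M$-cost is $\int h_+(x,x)\,d\mu(x)=\int\varrho_0(x)\,d\mu(x)=1$. For $D^M\ge 1$ I take the sequence $(\phi_n,\psi_n)$ of bounded Borel functions supplied by Proposition~\ref{P4.1}(iii), and modify it -- mirroring the construction \eqref{D1} of Section 3 -- by subtracting from $\phi_n$ the positive-part corrections $\sum_{j=0}^{M}[\phi_n(x)+\psi_n(T_\alpha^{(\tau_j)}(x))-h_+(x,T_\alpha^{(\tau_j)}(x))]_+$. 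The resulting pair $(\tilde\phi_n,\tilde\psi_n)$ satisfies $\tilde\phi_n\oplus\tilde\psi_n\le c_M$ pointwise, each correction tends to $0$ in $L^1(\mu)$ thanks to the $L^1$-convergence in Proposition~\ref{P4.1}(iii), so $\int\tilde\phi_n\,d\mu+\int\tilde\psi_n\,d\nu\to 1$, giving $D^M\ge 1$. Weak duality $D^M\le P^M$ closes the argument.

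For the second assertion I proceed by contradiction. If no positive $\beta(M)$ worked, one could pick $\beta_k\downarrow 0$ together with $\sigma_k$ of mass $\ge 2/3$ and $c_M$-cost $\le 1/2$, and $\rho_k$ supported in $\Delta^{\beta_k}$, satisfying $\sigma_k+\rho_k\in\Pi(\mu,\nu)$. Regarded as positive Borel measures of uniformly bounded total mass on the compact torus $\T^2$, Prokhorov's theorem permits extraction of weak-$*$ convergent subsequences $\sigma_k\rightharpoonup\sigma$ and $\rho_k\rightharpoonup\rho$. Three limit properties follow at once: $\sigma+\rho\in\Pi(\mu,\nu)$ (testing against continuous functions); $\|\sigma\|\ge 2/3$ and $\|\rho\|\le 1/3$ (passing to the limit in $\|\sigma_k\|+\|\rho_k\|=1$); and $\rho$ is supported on the diagonal $\Gamma_0$, since for every $\eps>0$ the closed set $\T^2\setminus\Delta^\eps$ carries no mass of $\rho_k$ for large $k$, hence none of $\rho$.

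The contradiction emerges from the identity
\begin{align*}
\int\tilde\phi_m\,d\mu+\int\tilde\psi_m\,d\nu=\int\tilde\phi_m\oplus\tilde\psi_m\,d\sigma_k+\int\tilde\phi_m\oplus\tilde\psi_m\,d\rho_k,
\end{align*}
valid for every $k,m$. The first summand is bounded by $\int c_M\,d\sigma_k\le 1/2$ for every $k$. For the second, the modified dual near-optimizers $\tilde\phi_m,\tilde\psi_m$ are step functions on the partitions $\{I_{k_1,\ldots,k_m}\}$ of $[0,1)$, hence continuous off a $\mu$-null set; since $p_X(\rho_k),p_Y(\rho_k)\le\mu$ and $p_X(\rho)=p_Y(\rho)$ ($\rho$ being concentrated on the diagonal), the portmanteau theorem applied to the weak-$*$ convergences $p_X(\rho_k)\rightharpoonup p_X(\rho)$ and $p_Y(\rho_k)\rightharpoonup p_Y(\rho)$ yields
\begin{align*}
\lim_{k\to\infty}\int\tilde\phi_m\oplus\tilde\psi_m\,d\rho_k=\int(\tilde\phi_m+\tilde\psi_m)\,dp_X(\rho)\le\|\rho\|\le\tfrac13,
\end{align*}
using the pointwise inequality $\tilde\phi_m(x)+\tilde\psi_m(x)\le c_M(x,x)=1$. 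Combining gives $\int\tilde\phi_m\,d\mu+\int\tilde\psi_m\,d\nu\le\tfrac12+\tfrac13=\tfrac56$ for every $m$; letting $m\to\infty$ then forces $1\le 5/6$, the sought contradiction. The main technical subtlety is precisely this passage to the limit in $k$ against the Borel functions $\tilde\phi_m,\tilde\psi_m$: it relies on the fact that the explicit iterated construction underlying Proposition~\ref{P4.1}(iii) produces step functions on a dyadic-type partition, so these functions are automatically continuous off a Lebesgue-null set.
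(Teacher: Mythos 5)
Your proof is correct, and in each of the two parts it takes a route that is genuinely different (and somewhat cleaner) than the paper's.

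For $P^M = D^M = 1$, the paper invokes Kellerer's duality theorem on a suitable $G_\delta$-modification $G\times G$ (where $c_M$ becomes lower semi-continuous by passing from half-open intervals to their interiors) to establish $P^M = D^M$, and then uses the dual near-optimizers to push $D^M$ up to $1$. You avoid Kellerer altogether: the diagonal plan gives $P^M\le 1$, the modified dual sequence gives $D^M\ge 1$, and weak duality $D^M\le P^M$ closes the loop. This is a simplification, and the use of the $h_+$-correction (rather than the paper's $h$-correction) in $\tilde\phi_n$ is equally valid since $[\,\cdot - h_+]_+\le[\,\cdot - h]_+$, so the $L^1$-convergence from Proposition \ref{P4.1}(iii) transfers.

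For the main assertion, both you and the paper argue by contradiction and use Prokhorov to extract weak limits $\sigma$ and $\varrho$ with $\varrho$ concentrated on the diagonal. The divergence is in how the contradiction is derived. The paper works on the primal side: it uses lower semi-continuity of $c_M|_{G\times G}$ to pass the bound $\int c_M\,d\sigma_{n_k}\le\tfrac12$ to the limit, adds $\int c_M\,d\varrho=\|\varrho\|\le\tfrac13$, and contradicts $P^M=1$. You instead work on the dual side: you split $\int\tilde\phi_m\,d\mu+\int\tilde\psi_m\,d\nu$ along $\sigma_k+\rho_k$, bound the $\sigma_k$-term by $\tfrac12$ uniformly in $k$ via $\tilde\phi_m\oplus\tilde\psi_m\le c_M$, and pass to the limit only in the $\rho_k$-term, where the portmanteau theorem applies because $\tilde\phi_m,\tilde\psi_m$ are step functions with countably many discontinuities, hence $p_X(\rho)$- and $p_Y(\rho)$-negligible discontinuity sets (as $p_X(\rho),p_Y(\rho)\le\mu$). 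Your route avoids both the passage to $G\times G$ and the lower-semicontinuity argument, at the cost of the (correctly handled) portmanteau step; you also correctly observe that the limit over $k$ of the fixed-$m$ identity, combined with $\lim_m\int\tilde\phi_m\,d\mu+\int\tilde\psi_m\,d\nu=1$, forces $1\le\tfrac56$. Both arguments are sound; yours keeps everything on the compact torus and trades a topological regularization of the cost for a regularity observation about the dual approximants.
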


\begin{proof}{}
First note that there is an open and dense  subset $G\subseteq
[0,1)$ of full measure $\mu(G)=1$ such that $c_M$, restricted to
$G\times G$ is lower semi-continuous. This follows from assertion
(i) of Proposition \ref{P4.1} by replacing the half open intervals
by their open interior. Noting that $G$ is polish we may apply the
general duality theory \cite{Kell84} to the cost function $c_M$
restricted to $G\times G$ to conclude that there is no duality gap
for the cost function $c_M|_{G\times G}.$ It follows that there is
also no duality gap for the original setting of $c_M$, defined on
$[0,1)\times [0,1),$ either.

We claim that, for every $M\geq 0$, the value $D^M$ of the  dual
problem equals 1. Indeed, let $(\phi_n,\psi_n)^\i_{n=1}$ be a
sequence as in Proposition \ref{P4.1} (iii). Defining
\begin{align*}
\tilde{\phi}_n:=\phi_n -\sum\limits_{j=0}^M
[\phi_n(x)+\psi_n(T^{(\tau_j)}_\alpha
(x))-h(x,T^{(\tau_j)}_\alpha(x))]_+
\end{align*}
and $\tilde{\psi}_n ={\psi}_n$, we have that
\begin{align*}
\tilde{\phi}_n(x)+\tilde{\psi}_n(y)\le h(x,y)\le h_+(x,y),
\end{align*}
for all $(x,y)$ in the graph of $T^0_\alpha ,T^1_\alpha
,T^{(\tau_2)}_\alpha ,\ldots ,T^{(\tau_M)}_\alpha,$ and
\begin{align*}
\lim\limits_{n\to\i} \Big[\int_X \tilde{\phi}_n(x) ~dx+\int_Y \tilde{\psi}_n(y)\,dy\Big]=1,
\end{align*}
showing that $D^M\geq 1.$ It follows that $D^M=P^M=1.$

\medskip

Now suppose that the final assertion of the proposition is wrong
to find a sequence $(\sigma_n)^\i_{n=1} \in\Pipart(\mu,\nu)$ with
$\|\sigma_n\|\geq \tfrac23$ and $\int_{X\times Y} c_M(x,y) \
d\sigma_n (x,y) \le \tfrac12$, as well as a sequence
$(\varrho_n)^\i_{n=1} \in\Pipart(\mu,\nu)$ with $\|\pi_n
+\varrho_n\|=1$ and $\pi_n+\varrho_n\in\Pi(\mu,\nu)$ such that
$\varrho_n$ is supported by
\begin{align}\label{33}
\Delta^{1/n} =\{(x,y)\in [0,1)^2: \delta (x,y)< \tfrac1n\}.
\end{align}

Considering $(\sigma_n)^\i_{n=1}$ as measures on the product
$G\times G$ of the polish space $G$, we then can find by
Prokhorov's theorem a subsequence $(\sigma_{n_k})^\i_{k=1}$
converging weakly on $G\times G$ to some $\sigma\in\Pipart
(\mu,\nu)$, for which we find $\|\sigma\| \geq \tfrac23$ and
$\int_{X\times Y}c(x,y) \ d\sigma(x,y) \le \tfrac12.$ By passing
once more to a subsequence, we may also suppose that
$(\varrho_{n_k})^\i_{k=1}$ weakly converges (as measures on
$G\times G$ or $[0,1)\times [0,1)$; here it does not matter) to
some $\varrho\in\Pipart(\mu,\nu)$ for which we get $\|
\sigma+\varrho\|=1$ and $\sigma + \varrho\in\Pi (\mu,\nu).$ By
\eqref{33} we conclude that $\varrho$ induces the identity
transport from its marginal $p_X(\varrho)$ onto its marginal
$p_Y(\varrho)=p_X(\varrho).$ As $c_M(x,x)=1,$ for $x\in[0,1)$ we
find that $\int_{X\times Y} c_M(x,y) \ d\varrho(x,y) = \|\varrho\|
\le\tfrac13,$ which implies that
$$\int c_M(x,y) \ d(\pi+\varrho)(x,y) \le \tfrac12 +\tfrac13,$$
a contradiction to the fact that $P^M=1$ which finishes the proof.
\end{proof}

We now can proceed to the construction of the example.
\begin{proposition}
Assume the setting of Proposition \ref{P4.1}.
For a subsequence $(i_j)^\i_{j=2}$ of $\{2,3,\ldots \}$ we define the cost function $c :[0,1) \times [0,1) \to [0,\i]$ by
 \begin{align}\label{L8}
c(x,y)=
\begin{cases}
    h_+(x,y), &\mbox{for} \ (x,y) \ \mbox{in the support of}\ T^0_\alpha, T^1_\alpha,
        T^{(\tau_{i_2})}_\alpha,T^{(\tau_{i_3})}_\alpha,\dots ,T^{(\tau_{i_j})}_\alpha, \dots\\
\i, &\mbox{otherwise.} \\
\end{cases}
\end{align}
If $(i_j)^\i_{j=2}$ tends sufficiently fast to infinity we have
that, for this cost function $c$, the primal value $P$ is strictly
positive, while the relaxed primal value $\Prel$ (see
\cite[Example 4.3]{BeLS09a}) as well as the dual value $D$ and the
relaxed dual value $\Drel$ (see \eqref{TagAuf33}) all are equal to
$0$.

In particular there is a duality gap $P-\Drel >0$, disproving the
conjecture mentioned at the beginning of this section.
\end{proposition}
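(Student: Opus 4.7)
The plan is to handle the four quantities $P,\Prel,D,\Drel$ in two groups: the ``soft'' dual side ($D=\Drel=0$, with $\Prel=0$ following by general duality), and the ``hard'' primal side ($P>0$).

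\textbf{Dual side.} Take any admissible $(\phi,\psi)\in\Psi^{\mathrm{rel}}(\mu,\nu)$. The transport plans $\pi_{\tau_{i_j}}$ are finite: indeed, $\int h_+ d\pi_{\tau_{i_j}}=\int f_{i_j}^+ d\mu\le 1+2^{-i_j}$, because $g_{i_j}\ge 0$ forces $\int f_{i_j}^-\le\|f_{i_j}-g_{i_j}\|_{L^1}\le 2^{-i_j}$ and $\int f_{i_j}=1$ by Proposition \ref{P4.1}. Hence the relaxed constraint reads, $\mu$-a.s.,
\[
    \phi(x)+\psi(T^{(\tau_{i_j})}_\alpha(x))\;\le\; f_{i_j}^+(x).
\]
The right-hand side tends to $0$ in $\mu$-measure: $f_{i_j}^+$ is within $2^{1-i_j}$ in $L^1$ of the non-negative function $g_{i_j}$, and $g_{i_j}\to 0$ in measure as $\eta_{i_j}\to 0$. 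On the left, since $T^{(\tau_{i_j})}_\alpha\to\mathrm{id}$ uniformly by Proposition \ref{P4.1}(iv) and the $T^{(\tau_{i_j})}_\alpha$ are measure preserving, an approximation by continuous functions on $\T$ shows $\psi\circ T^{(\tau_{i_j})}_\alpha\to\psi$ in $L^1(\mu)$. Extracting a common subsequence on which both convergences are $\mu$-a.s., one passes to the pointwise limit to obtain $\phi(x)+\psi(x)\le 0$ for $\mu$-a.e.\ $x$. Integrating gives $\int\phi\,d\mu+\int\psi\,d\nu\le 0$; combined with the trivial bound from $(\phi,\psi)=(0,0)$, this yields $\Drel=D=0$.

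\textbf{$\Prel=0$ and $P>0$ (primal side).} The identity $\Prel=0$ is immediate from $\Prel=D$, the main duality theorem of \cite{BeLS09a}. For $P>0$, the plan is to run Proposition \ref{L4.2} along the subsequence: its proof only uses that one has finitely many measure preserving maps with restricted primal/dual equal to $1$, so for every $M\ge 2$ there exists $\beta_M>0$ such that no partial transport $\sigma\in\Pipart(\mu,\nu)$ supported on $\Gamma_0\cup\Gamma_1\cup\Gamma_{\tau_{i_2}}\cup\cdots\cup\Gamma_{\tau_{i_M}}$ with $\|\sigma\|\ge\tfrac23$ and $\int c\,d\sigma\le\tfrac12$ admits a completion $\varrho\in\Pipart(\mu,\nu)$ with $\sigma+\varrho\in\Pi(\mu,\nu)$ supported on $\Delta^{\beta_M}$. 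Choose $(i_j)$ inductively so that $2^{-i_{M+1}}<\beta_M$ for all $M\ge 1$; by Proposition \ref{P4.1}(iv) this forces the tail graphs $\Gamma_{\tau_{i_j}}$ with $j>M$ to lie inside $\Delta^{\beta_M}$. Now suppose $\pi\in\Pi(\mu,\nu)$ satisfies $\int c\,d\pi<\tfrac12$; let $M$ be the least integer with $\pi\bigl(\Gamma_0\cup\Gamma_1\cup\bigcup_{j=2}^M\Gamma_{\tau_{i_j}}\bigr)\ge\tfrac23$ and decompose $\pi=\sigma+\varrho$ accordingly. Then $\|\sigma\|\ge\tfrac23$, $\int c\,d\sigma<\tfrac12$, and $\varrho$ is supported on $\Delta^{\beta_M}$ by construction, contradicting the above analog of Proposition \ref{L4.2}. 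Hence $P\ge\tfrac12>0$, and in particular the duality gap $P-\Drel>0$.

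\textbf{Main obstacle.} The hard part is the dual bound: straight integration of the pointwise inequality only yields $\int\phi\,d\mu+\int\psi\,d\nu\le 1+o(1)$, which is the \emph{wrong} bound for a duality gap. Converting this into the sharper almost-sure pointwise inequality $\phi+\psi\le 0$ is the essence of the construction, and depends crucially on the \emph{uniform and quantitative} convergence $T^{(\tau_{i_j})}_\alpha\to\mathrm{id}$ from Proposition \ref{P4.1}(iv); without it, the $L^1$ approximation $\psi\circ T^{(\tau_{i_j})}_\alpha\to\psi$ would fail for a general integrable $\psi$ and the dual value would collapse to the much weaker bound $1$ rather than $0$.
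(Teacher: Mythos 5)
Your proof is correct; the primal part is essentially the paper's argument, but the dual part takes a genuinely different route. The paper works primally: it first shows $\Prel=0$ directly (by transporting $\mu\mathbbm{1}_{\{g_n=0\}}$ via $T^{(\tau_n)}_\alpha$, using \eqref{L3} and \eqref{L3a}), then deduces $D=0$ from $\Prel=D$, and finally obtains $\Drel=0$ by the structural observation that $\{c<\infty\}$ is a countable union of graphs of finite-cost Monge maps, so that $D=\Drel$ in this example. You instead work dually: you bound $\Drel\le 0$ directly by passing to $\mu$-a.s.\ limits in the relaxed constraints $\phi(x)+\psi(T^{(\tau_{i_j})}_\alpha(x))\le f^+_{i_j}(x)$, using the uniform convergence $T^{(\tau_{i_j})}_\alpha\to\mathrm{id}$ from Proposition \ref{P4.1}(iv) together with measure invariance to get $\psi\circ T^{(\tau_{i_j})}_\alpha\to\psi$ in $L^1$, and then recover $D=0$ and $\Prel=0$ via the general duality theorem. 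The steps are sound: the finiteness of $\pi_{\tau_{i_j}}$, the estimate $\int f^-_{i_j}\le 2^{-i_j}$, the convergence $f^+_{i_j}\to 0$ in measure, the $L^1$-continuity-under-small-translation argument, and the extraction of a common a.s.\ subsequence all check out. Your route has the advantage of attacking $\Drel$ head-on and not needing the ``$D=\Drel$ in this example'' observation; the paper's route has the advantage of a self-contained primal construction that avoids the Lusin/density approximation of $\psi$ by continuous functions. For $P>0$ your argument mirrors the paper's Proposition \ref{L4.2} bootstrap exactly (decompose $\pi$ into a head $\sigma$ on finitely many graphs with $\|\sigma\|\ge\tfrac23$ and a tail $\varrho$ forced into $\Delta^{\beta_M}$ by the choice of $(i_j)$, then contradict), and you correctly note that Proposition \ref{L4.2}'s proof applies verbatim to any finite sublist of the maps.
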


\begin{proof}{}
We proceed inductively: let $j\geq 2$ and suppose that
$i_0=0,i_1=1,i_2,\dots ,i_j$ have been defined. Apply Proposition
\ref{L4.2} to
 \begin{align*}
c_j(x,y)=
\begin{cases}
h_+(x,y), & \ \mbox{for} \ (x,y) \ \mbox{in the support of}\ T^0_\alpha, T^1_\alpha, T^{(\tau_{i_2})}_\alpha,T^{(\tau_{i_3})}_\alpha,\dots ,T^{(\tau_{i_j})}_\alpha,\\
\i, &\ \mbox{otherwise,} \\
\end{cases}
\end{align*}
to find $\beta_j >0$ satisfying the conclusion of Proposition \ref{L4.2}. We may and do assume that $\beta_j \le \min (\beta_1,\dots ,\beta_{j-1})$. Now choose
$i_{j+1}$ such that
\begin{align}\label{P37}
\delta(x, T^{(\tau_{i_{j+1}})}_\alpha(x)) < \beta_j, \quad x\in[0,1).
\end{align}
This finishes the inductive step and well-defines the cost function $c(x,y)$ in \eqref{L8}.

By \eqref{P32} each $T^{(\tau_{i_j})}_\alpha$ induces a Monge transport $\pi_{i_j} \in\Pi(\mu,\nu)$ which satisfies
$$\int_{X\times Y} h(x,y) \, d\pi_{i_j} (x,y)=\int_X h(x, T^{(\tau_{i_j})}_\alpha) \, dx=1.$$

The fact that the relaxed primal value $\Prel$ for the cost
function $c$ equals zero, directly follows from the definition of
$\Prel$ \cite[Section 1.1]{BeLS09a}, \eqref{L3} and \eqref{L3a} by
transporting the measure $\mu \mathbbm{1}_{\{g_n=0\}}$, which has
mass $1-\eta_n$, via the Monge transport map $T^{(\tau_n)}_\alpha$
where $n$ is a large element of the sequence $(i_j)_{j=1}^\infty$.
Hence we conclude from \cite[Theorem 1.2]{BeLS09a} that the dual
value $D$ of the Monge--Kantorovich problem for the cost function
$c$ defined in \eqref{L8} also equals zero.

Finally observe that we have $D=\Drel$ in the present example:
indeed, the set $\{(x,y)\in[0,1)^2:c_(x,y) <\i\}$ is the countable
union of the supports of the finite cost Monge transport plans
$T^0_\alpha, T^1_\alpha,
T^{(\tau_{i_1})}_\alpha,T^{(\tau_{i_2})}_\alpha,\dots
,T^{(\tau_{i_j})}_\alpha,\dots ,$ so that the requirements
$\phi(x)+\psi(y) \le c(x,y),$ for all $(x,y)\in [0,1)^2$, and
$\phi(x)+\psi(y)\le c(x,y), \ \pi$-a.s., for each finite transport
plan $\pi\in\Pi(\mu,\nu)$, coincide (after possibly modifying
$\phi(x)$ on a $\mu$-null set).

\medskip

What remains to prove is that the primal value $P$ satisfies $P>0$. We shall show that, for every transport plan $\pi\in\Pi(\mu,\nu),$ we have
$\int_{X\times Y} c(x,y) \ d\pi (x,y)\geq \tfrac12$.
Assume to the contrary that there is $\pi\in\Pi(\mu,\nu)$ such that
$$
\int\limits_{X\times Y} c(x,y) \ d\pi (x,y)<\tfrac12.
$$
Denoting by $\sigma_j$ the restriction of $\pi$ to the union of
the graphs of the maps $T^0_\alpha,$ $ T^1_\alpha,$ $
T^{(\tau_{i_1})}_\alpha,$ $T^{(\tau_{i_2})}_\alpha,$ \dots$
,T^{(\tau_{i_j})}_\alpha,$ each $\sigma_j$ is a partial transport
in $\Pipart(\mu,\nu)$ and the norms $(\|\sigma_j\|)^\i_{j=1}$
increase to one. Choose $j$ such that $$\|\sigma_j\| >\tfrac23.$$
We apply Proposition \ref{L4.2} to conclude  that there is no
partial transport plan $\varrho_j$ such that $\pi_j+\varrho_j
\in\Pi(\mu,\nu)$, and such that $\varrho_j$ is supported by
$\Delta^{\beta_j}.$
But this is a contradiction  as $\varrho_j =\pi -\sigma_j$ has precisely these properties by \eqref{P37}.
\end{proof}

\begin{proof}[Proof of Proposition \ref{P4.1}:] The construction of the example described by Proposition \ref{P4.1} will be an extension of the construction in the
previous section from which we freely use the notation.

We shall proceed by induction on $j\in\N$ and define a double-indexed family of maps $\tau_{n,j}:[0,1)\to \Z$, where $1\le n\le j.$

{\bf
Step $j=1$:} Define
$$\tau_{1,1}:[0,1)\to\Z$$
as $$\tau_{1,1} =-\tau_1,$$ where we have $m_1=M_1, \alpha_1=\tfrac{1}{M_1}$ and $\tau_1$ as in \eqref{cal} above. At this stage the only difference to
the previous section is that we change the sign of $\tau_1$ as we now shall build up a ``positive singular mass'', as opposed to the ``negative singular mass''
which we constructed in the previous section. More precisely, defining $\phi^{1},\psi^{1}$ as in \eqref{choice}, we obtain, similarly as in
\eqref{shortcal}
\begin{align*}
\phi^{1}(x)+\psi^{1} (T^{(\tau_{1,1})}_{\alpha_1}(x))=
\begin{cases}
0, &  \mbox{for} \ x \in I_{k_1}, k_1\in \{2,\dots ,(M_1-1)/2,\\
    & \phantom{ \mbox{for} \ x \in } (M_1+3)/2,\dots ,M_1-1\},\\
(M_1-1)/2, & \mbox{for} \ x \in I_{k_1}, k_1=1, M_1, \\
1, & \mbox{for} \ x \in I_{(M_1+1)/2}.\\
\end{cases}
\end{align*}

This finishes the inductive step for $j=1.$

{\bf Step $j=2$:} Let $m_2$ and $M_2=M_1m_2$ be as in section 3, where $m_2$ satisfies the requirements of Lemma 3.1, and still is free to be eventually
specified. To define $\tau_{1,2}:[0,1)\to \Z$ we want to make sure that the map $T^{(\tau_{1,2})}_{\alpha_2}$ maps the intervals $I_{k_1}$ bijectively onto
$T^{(\tau_{1,1})}_{\alpha_1}(I_{k_1}).$ Using the notation of the previous section, we consider {\it all} the intervals $I_{k_1}$ as ``{\it good}''
intervals so that we do not have to take extra care of some {\it ``singular''} intervals.

More precisely, fix $1\le k_1\le M_1,$ and write $\tau$ for $\tau_{1,1}|_{I_{k_1}}.$
If $\tau >0$, define $J^{k_1,c}$ as $\{m_2-\tau +1,\ldots ,m_2\},$ i.e.~the set of those indices $k_2$ such that the interval $I_{k_1,k_2}$ is not mapped into
$T^{(\tau_{1,1})}_{\alpha_1}(I_{k_1})$ under $T^{(\tau_{1,1})}_{\alpha_2}.$ If $\tau <0$, we define $J^{k_1,c}$ as $\{1,\ldots ,|\tau|\},$ and if $\tau =0,$
we define $J^{k_1,c}$ as the empty set. The complement $\{1,\ldots ,m_2\}\backslash J^{k_1,c}$ is denoted by $J^{k_1,u}.$

Define $\tau_{1,2}:=\tau_{1,1}=\tau$ on the intervals $I_{k_1,k_2},$ for $k_2\in J^{k_1,u}.$ On the remaining intervals $I_{k_1,k_2}$ with $k_2\in J^{k_1,c}$
we define $\tau_{1,2}$ such that it takes constant values in $\{-M_2+1,\ldots ,M_2-1\}$ on each of these intervals, such that \eqref{p24a} (resp.~\eqref{p24b}
is satisfied, and such that these intervals $I_{k_1,k_2}$ are mapped onto the ``remaining gaps'' in $T^{(\tau_{1,1})}_{\alpha_1}(I_{k_1}).$

Using again Lemma 3.3 we resume the properties of the thus constructed map $T^{(\tau_{1,2})}_{\alpha_2}:[0,1)\to [0,1).$

\begin{enumerate}
\item
The measure-preserving bijection $T^{(\tau_{1,2})}_{\alpha_2}$ maps each interval $I_{k_1}$ onto $T^{(\tau_{1,1})}_{\alpha_1}(I_{k_1}).$
It induces a permutation of the intervals
$I_{k_1,k_2},$ where $1\le k_1\le M_1, 1\le k_2\le m_2.$
\item
Defining $\phi^2,\psi^2$ as in \eqref{K1a} we get, for each $1\le k_1\le M_1,$ similarly as in \eqref{p33} and \eqref{K8a}
\begin{align*}
\mu[I_{k_1} \cap \{\tau_{1,2}\neq\tau_{1,1}\}]\leq \tfrac{M_1}{m_2} \mu [I_{k_1}],
\end{align*}
as well as
\begin{align*}
\sum\limits^{M_1}_{k_1=1} \ \int_{I_{k_1}} |(\phi^1(x)-\phi^1(T^{(\tau_{1,1})}_{\alpha_1}(x)) -(\phi^2 (x)-\phi^2 (T^{(\tau_{1,2})}_{\alpha_2} (x))|dx
 < \frac{4M^2_1}{m_2} .
\end{align*}
\item
On the middle interval $I^1_\text{middle} =I_{\frac{M_1+1}{2}}$ we have $\tau_{1,2} =\tau_{1,1} =0$.
\end{enumerate}

\medskip

We now pass to the construction of the map $\tau_{2,2}:[0,1)\to\Z.$ We define, for each $1\le k_1\le M_1,$ and $x\in I_{k_1,k_2},$
\begin{align*}
\tau_{2,2}(x)=
\begin{cases}
a_2 (k_2), & \mbox{for} \ k_2\in\{1,\ldots ,M_1\} \\
-M_1, & \mbox{for} \ k_2\in\{M_1+1,\ldots ,(m_2-1)/ 2\}, \\
0, & \mbox{for} \ k_2=(m_2+1)/ 2, \\
M_1, & \mbox{for} \ k_2 \in\{(m_2+3)/ 2,\ldots, m_2-M_1\},\\
a_2 (k_2), & \mbox{for} \ k_2\in\{m_2-M_1+1,\ldots ,m_2\}.
\end{cases}
\end{align*}

The definition of the function $a_2$ on the ``singular'' intervals $I_{k_1,k_2}$, where $k_2\in\{1,\ldots ,M_1\} \cup \{m_2 -M_1+1,\ldots ,m_2\}$ is done
such that $T^{(\tau_{2,2})}_{\alpha_2}$ maps these intervals onto ``remaining gaps'' $I_{k_1,l_2}$, where $l_2$ runs through the set
$$\{(m_2-1)/2-M_1+1,\ldots ,(m_2-1)/2\} \cup \{(m_2+3)/2,\ldots ,(m_2+3)/2+M_1-1\}$$
in the middle region of the interval $I_{k_1}.$ As above we require in addition that $a_2$ on each $I_{k_1,k_2}$ takes constant values  in
$\{-M_2+1,\ldots ,M_2-1\}$ and that \eqref{p24a} (resp.~\eqref{p24b}) is satisfied.

The function $\tau_{2,2}$ mimics the construction of $\tau_{1,1}$ above, with the role of $[0,1)$ replaced by each of the intervals $I_{k_1}$,
for $1\le k_1\le M_1.$ The idea is that, $T^{M_1}_{\alpha_1}$ being the identity map, we have that $T^{M_1}_{\alpha_2}$ satisfies $T^{M_1}_{\alpha_2}(x) =x
\oplus\tfrac{M_1}{M_2}$ and $\tfrac{M_1}{M_2}=\tfrac{1}{m_2}$ is small. Hence the role of $T_{\alpha_1}$ in the previous section now is taken by
$T^{M_1}_{\alpha_2}$.

More precisely, we have, for each $k_1=1,\ldots ,M_1$, and $x\in I_{k_1,k_2}$
\begin{align}\label{page41}
 \phi^2(x)+&\psi^2(T^{(\tau_{2,2})}_{\alpha_2}(x))= \nonumber\\
& =
\begin{cases}
0,  & \mbox{for} \ k_2\in\{M_1+1,\ldots ,(m_2-1)/ 2, \\
& \phantom{\mbox{for} \ k_2\in}(m_2+3)/2,\ldots ,m_2-M_1\}, \\
\tfrac{m_2}{2M_1} +\gamma(M_1), & \mbox{for}\  k_2\in\{1,\ldots ,M_1\}\cup \{m_2-M_1+1,\ldots ,m_2\}, \\
1,  & \mbox{for} \ k_2=(m_2+1)/2.
\end{cases}
\end{align}
The notation $\gamma(M_1)$ denotes a quantity verifying $|\gamma(M_1)|\le c(M_1)$ for some constant $c(M_1)$, depending only on $M_1.$ The verification of
\eqref{page41} uses Lemma 3.3 and is analogous as in section 3.

\medskip

As $T^{(\tau_{2,2})}_{\alpha_2}$ defines a measure preserving bijection on $[0,1)$, we get
\begin{align}\label{97}
\int^1_0(\phi^2(x)+\psi^2 (T^{(\tau_{2,2})}_{\alpha_2}(x))~dx= \int^1_0(\phi^2(x)+\psi^2(x))\,dx=1.
\end{align}

This finishes the inductive step for $j=2.$

{\bf General Inductive step:} For prime numbers $m_1,\ldots ,m_{j-1}$ as in the previous section suppose that we have defined, for $1\le n\le j-1$ maps
$\tau_{n,j}:[0,1)\to\Z$ such that the following inductive hypotheses are satisfied.
\begin{enumerate}
\item
For $1\le n\le j-1,$ the measure preserving bijection $T^{(\tau_{n,j-1})}_{\alpha_i}:[0,1)\to [0,1)$ maps the intervals $I_{k_1,\ldots , k_{n-1}}$ onto
themselves. It induces a permutation of the intervals $I_{k_1,\ldots ,k_{j-1}},$ where $1\le k_1\le m_1,\ldots ,1\le k_{j-1}\le m_{j-1}.$
\item
For $1\le n <j-1$ we have, for $1\le k_1\le m_1,\ldots ,1\le k_{j-2}\le m_{j-2},$
\begin{align}\label{page41a}
\mu[I_{k_1,\ldots ,k_{j-2}} \cap \{\tau_{n,j-2}\neq\tau_{n,j-1}\}]\leq \tfrac{M_{j-2}}{m_{j-1}} \mu [I_{k_1,\ldots ,k_{j-2}}],
\end{align}
and
\begin{align}\label{The111}
\begin{split}
& \sum_{1\le k_1\le m_1,\ldots ,1\le k_{j-2}\le m_{j-2}} \ \ \int_{I_{k_1,\dots ,k_{j-2}}} \Big|\Big(\phi^{j-2}(x)-\phi^{j-2}
(T^{(\tau_{n,j-2})}_{\alpha_{j-2}}(x))\Big)  \\
& -\Big(\phi^{j-1}(x)-\phi^{j-1}
(T^{(\tau_{n,j-1})}_{\alpha_{j-1}}(x))\Big) \Big| dx <\tfrac{c(M_1,\dots ,M_{j-2})}{m_{j-1}}.
\end{split}
\end{align}
\end{enumerate}

\medskip

We now shall define $\tau_{n,j}:[0,1)\to\Z,$ for $1\le n \le j$ and $\tau_{j,j}:[0,1)\to\Z.$

Fix $1\le n\le j-1$ as well as $1\le k_1\le m_1,\ldots ,1\le k_{j-1}\le m_{j-1}.$ Denote by $\tau$ the constant value $\tau_{n, j-1}|_{I_{k_1,\ldots ,k_{j-1}}}.$
If $\tau >0$ define $J^{k_1,\ldots ,k_{j-1},c}$ as $\{m_j-\tau +1,\ldots ,m_j\}$, similarly as for the case $j=2$ above. If $\tau \le 0$ define
$J^{k_1,\ldots ,k_{j-1},c}$ as $\{1,\ldots ,|\tau|\}$ which, for $\tau=0$, equals the empty set.
On the intervals $I_{k_1,\ldots ,k_{j-1},k_j}$ where $k_j$ lies in the complement $J^{k_1,\ldots ,k_{j-1},u}=\{1,\ldots ,m_j\}\backslash J^{k_1,\ldots ,k_{j-1},c}$
we define $\tau_{n,j}:=\tau_{n,j-1}.$ On the remaining intervals $I_{k_1,\ldots ,k_{j-1},k_j},$ where $k_j\in J^{k_1,\ldots ,k_{j-1},c},$ we define $\tau_{n,j}$ in
such a way that it takes constant values in $\{-M_j+1,\ldots ,M_j-1\}$ on each of these intervals, such that \eqref{p24a} (resp. \eqref{p24b}) is satisfied,
and such that these intervals $I_{k_1,\ldots ,k_{j-1},k_j}$ are mapped onto the ``remaining gaps'' in $T^{(\tau_{n,j-1})}_{\alpha_{j-1}}(I_{k_1,\ldots ,k_{j-1}}).$

Similarly as in the previous section we thus well-define the function $\tau_{n,j}$ which then verifies \eqref{page41a} and \eqref{The111}, with $j-1$
replaced by $j$.

\medskip

We still have to define $\tau_{j,j}:[0,1)\to\Z.$ For $1\le k_1\le m_1,\ldots ,1\le k_{j-1}\le m_{j-1}$, we define $\tau_{j,j}(x)$ on the intervals
$I_{k_1,\ldots ,k_{j-1},k_j}$ by
\begin{align*}
\tau_{j,j}(x)=
\begin{cases}
a_j (k_j), & \mbox{for} \ k_j\in\{1,\ldots ,M_{j-1}\} \\
-M_j, & \mbox{for} \ k_j\in\{M_{j-1}+1,\ldots ,(m_j-1)/ 2\}, \\
0, & \mbox{for} \ k_j=(m_j+1)/ 2, \\
M_j, & \mbox{for} \ k_j \in\{(m_j+3)/ 2,\ldots, m_j-M_{j-1}\},\\
a_j (k_j), & \mbox{for} \ k_j\in\{m_j-M_{j-1}+1,\ldots ,m_j\}.
\end{cases}
\end{align*}

Similarly as in step $j=2$ the $\{-M_j+1,\ldots ,M_j-1\}$-valued function $a_j(k_j)$ is defined in such a way that $T^{(\tau_j)}_{\alpha_j}$ maps the intervals
$I_{k_1,\ldots ,k_{j-1},k_j}$ with $k_j\in\{1,\ldots ,M_{j-1}\}\cup \{m_j-M_{j-1}+1,\ldots ,m_j\}$ to the intervals $I_{k_1,\ldots ,k_{j-1},k_j},$ where
$k_j$ runs through the ``middle region''
$$\{(m_j-1)/2-M_{j-1}+1,\ldots ,(m_j-1)/2\}\cup \{(m_j+3)/2,\ldots ,(m_j+3)/2+M_{j-1}-1\}.$$ We now deduce from
Lemma 3.3 that, for $x\in I_{k_1,\ldots ,k_{j-1},k_j}$
\begin{align*}
&\phi^j(x)+\phi^j(T^{(\tau_{j,j})}_{\alpha_j}(x))=\\
&=
\begin{cases}
0,  & \mbox{for} \ k_j\in \{M_{j-1}+1,\ldots ,(m_j-1)/2\} \\
&\phantom{\mbox{for} \ k_j\in} \cup\{(m_j+3)/2,\ldots ,m_j-M_{j-1}\}, \\
\tfrac{m_j}{2M_{j-1}} +\gamma(M_1,\ldots ,M_{j-1}), &  \mbox{for} \ k_j\in    \{1,\ldots ,M_{j-1}\}\cup \{m_j-M_{j-1}+1,\ldots ,m_j\}, \\
1,  & \mbox{for} \ k_j=(m_j+1)/2,
\end{cases}
\end{align*}
where $\gamma(M_1, \ldots, M_{j-1})$ denotes a quantity which is bounded in absolute value by a constant $c(M_1, \ldots, M_{j-1})$ depending only on $M_1, \ldots, M_{j-1}$.

This completes the inductive step.

\bigskip

We now define $\tau_0=0,\tau_1=1$ and, for $n\geq 2$
\begin{align}\label{SSS}
\tau_n=\lim\limits_{j\to\i} \tau_{n-1,j}.
\end{align}


It follows from \eqref{page41a} that, for each $n\geq 2$, the
limit \eqref{SSS} exists almost surely provided the sequence
$(m_n)_{n=1}^\infty$ converges sufficiently fast to infinity,
similarly as in section 3 above. The $(\tau_n)^\i_{n=0}$ and the
above constructed functions $(\phi_n, \psi_n)_{n=1}^\infty$
satisfy the assertions of Proposition \ref{P4.1}. The verification
of items (i), (ii), and (iii) is analogous to the arguments of
section 3 and therefore skipped. As regards assertions (iv) note
that, for $1\leq n\leq j$ the function
$T_{\alpha_j}^{(\tau_{n,j})}$ maps the intervals $I_{k_1,\ldots,
k_{n-1}} $ onto themselves. It follows that
$T_{\alpha}^{(\tau_n)}$ does so too, whence
$$\delta(x, T_\alpha^{(\tau_n)}(x))< M_n^{-1},$$
which readily shows \eqref{L4}.
\end{proof}

\def\ocirc#1{\ifmmode\setbox0=\hbox{$#1$}\dimen0=\ht0 \advance\dimen0
  by1pt\rlap{\hbox to\wd0{\hss\raise\dimen0
  \hbox{\hskip.2em$\scriptscriptstyle\circ$}\hss}}#1\else {\accent"17 #1}\fi}

\end{document}